\numberwithin{equation}{section}
\newtheorem{Theorem}{Theorem}[section]
\newtheorem{Lemma}{Lemma}[section]
\newtheorem{Proposition}{Proposition}[section]
\newtheorem{Corollary}{Corollary}[section]
\newtheorem{Remark}{\textbf{Remark}}[section]
\def\S{\Sigma}
\def\n{\nabla}
\def\p{\partial}
\def\a{\alpha}
\def\b{\beta}
\def\n{\nabla}
\def\p{\partial}
\def\a{\alpha}
\def\b{\beta}
\def\g{\gamma}
\def\d{\delta}
\def\n{\nabla}
\def\<{\langle}
\def\>{\rangle}
\def\n{\nabla}
\def\p{\partial}
\def\a{\alpha}
\def\b{\beta}
\def\g{\gamma}
\def\d{\delta}
\def\rr{\mathbb{R}}
\def\ve{\varepsilon}
\begin{document}
	
	\title[New monotonicity for $p$-capacitary functions]{New monotonicity for $p$-capacitary functions in $3$-manifolds with nonnegative scalar curvature}
	\author{Chao Xia}
	\address{School of Mathematical Sciences\\
		Xiamen University\\
		361005, Xiamen, P.R. China}
	\email{chaoxia@xmu.edu.cn}
	\author{Jiabin Yin}
	\address{School of Mathematics and Statistics\\ Guangxi Normal University\\  541004, Guilin, P.R. China}
	\email{jiabinyin@126.com}
	\author{Xingjian Zhou}
	\address{School of Mathematical Sciences\\
		Xiamen University\\
		361005, Xiamen, P.R. China}
	\email{zhouxingjian@stu.xmu.edu.cn}
	\thanks{CX is supported by the NSFC (Grant No. 12271449). JY is supported by the NSFC (Grant No. 12201138) and Mathematics Tianyuan fund project (Grant No. 12226350).}
	
	\begin{abstract}
		In this paper, we derive general monotone quantities and geometric inequalities associated with $p$-capacitary functions in asymptotically flat $3$-manifolds with simple topology and nonnegative scalar curvature. The inequalities become equalities on the spatial Schwarzschild manifolds outside rotationally symmetric spheres. This generalizes Miao's result \cite{M1}  from $p=2$ to $p\in (1, 3)$.  As applications, we recover mass-to-$p$-capacity and $p$-capacity-to-area inequalities due to Bray-Miao \cite{BM} and Xiao \cite{Xiao}.
		
	\end{abstract}

	\date{}
	\keywords{}

	\maketitle

	\section{Introduction}
	For an asymptotically flat $3$-manifold $(M, g)$ with outmost minimal boundary and non-negative scalar curvature, the Riemannian-Penrose inequality says that its ADM mass \begin{eqnarray}\label{RPI}
		\mathfrak{m}_{ADM}(M)\ge \sqrt{\frac{|\p M|}{16\pi}},
	\end{eqnarray}
	with equality holding if and only if $(M, g)$ is a spatial Schwarzschild manifold. It was first proved in the case of connected boundary by Huisken-Ilmanen \cite{HI} via establishing a Geroch-type monotonicity formula for a level set formulation of inverse mean curvature flow. The general case and the higher dimensional inequality in dimensions less than eight was proved by Bray \cite{Br} and  Bray-Lee \cite{BL} via a conformal flow of metrics. Recently, Agostiniani-Mantegazza-Mazzieri-Oronzio \cite{AMMO} gave a new proof of the 3-dimensional Riemannian-Penrose inequality in the case of connected boundary via establishing monotonicity formula for $p$-harmonic functions. Before that, Bray-Kazaras-Khuri-Stern \cite{BKKS} and Agostiniani-Mazzieri-Oronzio \cite{AMO} also gave two new  proofs of the Riemannian positive mass theorem via establishing various monotonicity formulas for harmonic functions.  %Besides the above mentioned works, there are many developments in last decades on various monotonicity formula in geometric analysis, see \cite{}

	On the other hand, Bray \cite{Br} applied the Riemannian positive mass theorem to prove a mass-capacity inequality $$\mathfrak{m}_{ADM}\ge {\rm Cap}(\p M)$$ for  asymptotically flat $3$-manifolds with minimal boundary, where ${\rm Cap}(\p M)$ is the capacity of $\p M$ in $M$. Later, Bray-Miao \cite{M1} extended Bray's mass-capacity inequality to manifolds with nonnegative Hawking mass and connected boundary that is not necessary minimal,
	\begin{eqnarray}\label{mass-to-capacity}
		\frac{\mathfrak{m}_{ADM}(M)}{{\rm Cap}(\p M)}\ge 1-\left(\frac{1}{16\pi}\int_{\p M}H^2\right)^{\frac12},
	\end{eqnarray}
	and Xiao \cite{Xiao} extended Bray-Miao's result to the $p$-capacity.
	
	Recently,  inequality \eqref{mass-to-capacity} was shown by Miao \cite{M1} without the assumption of nonnegative Hawking mass, which offers another proof of 3-dim PMT, see Miao \cite{M2}. To prove that, Miao \cite{M1} found several monotone quantities associated to capacitary functions on $3$-manifolds with simple topology and non-negative scalar curvature, by which he established the following geometric inequalities
	\begin{eqnarray}
		&&4\pi+\int_{\p M} H|\n u|\ge 3\int_{\p M} |\n u|^2,\label{miao-ineq1}\\
		&&4\pi-\int_{\p M}  |\n u|^2\le 4\pi \frac{\mathfrak{m}_{ADM}(M)}{{\rm Cap}(\p M)},\label{miao-ineq2}
		%\\
		%&&8\pi-\int_{\p M}  H|\n u|\le 12\pi \frac{\mathfrak{m}_{ADM}(M)}{{\rm Cap}(\p M)},
	\end{eqnarray}
	where $u$ is the capacitary function on $M$ and $H$ is the mean curvature. Moreover, equality in both \eqref{miao-ineq1} and \eqref{miao-ineq2} holds if and only if  $(M, g)$ is isometric to $\rr^3$ minus a round ball, see \cite[Theorems 3.1 and 3.2]{M1}.  Inspired by Bray's work \cite{Br}, by using  a conformal metric which preserves the nonnegativity of scalar curvature and the harmonicity of a function, Miao \cite{M1} promoted the above geometric inequalities to the following ones: for $l>0$,
	\begin{eqnarray}
		&&4\pi+l\int_{\p M} H|\n u|\ge l(4-l)\int_{\p M} |\n u|^2,\label{miao-ineq3}\\
		&&4\pi\left(2-\frac{1}{l}\right)-l\int_{\p M}  |\n u|^2\le 4\pi \frac{\mathfrak{m}_{ADM}(M)}{{\rm Cap}(\p M)},\label{miao-ineq4}%\\
		%&&8\pi-\int_{\p M}  H|\n u|\le 12\pi \frac{\mathfrak{m}_{ADM}(M)}{{\rm Cap}(\p M)},
	\end{eqnarray}
	which become equalities in a spatial Schwarzschild manifold outside a rotationally symmetric sphere, see \cite[(7.10), (7.27)]{M1}. %With these inequalities Miao reproved \eqref{mass-to-capacity} without the assumption of nonnegative Hawking mass. 
	Similar monotone quantities and geometric inequalities on capacitary functions have been also found by Oronzio \cite{O} independently. We also remark that various monotone quantities  for Green's functions and $p$-harmonic Green's
	functions on $3$-manifolds with non-negative scalar curvature and their applications were studied by Agostiniani-Mazzieri-Oronzio \cite{AMO},  Munteanu-Wang \cite{MW} and Chan-Chu-Lee-Tsang \cite{CCLT} respectively.  {These works were inspired and preceded by their harmonic counterparts \cite{AFM20,CM,CM14}, starting with Colding's breakthrough \cite{C12}.}
	
	Continuing on Miao's work \cite{M1}, Hirsch-Miao-Tam \cite{HMT} also found several monotone quantities associated to $p$-capacitary functions on $3$-manifolds with simple topology and non-negative scalar curvature, by which they established geometric inequalities  associated to $p$-capacitary functions in the same spirit of \eqref{miao-ineq1} and \eqref{miao-ineq2}, whose model space is $\rr^3$ minus a round ball. Before that, Agostiniani-Mantegazza-Mazzieri-Oronzio \cite{AMMO} also derives a similar monotone quantity associated to $p$-capacitary functions, aiming at Riemannian-Penrose inequality as we mentioned at the beginning.
	However, there seems no corresponding conformal metric which preserves the $p$-harmonicity and it is not known whether Hirsch-Miao-Tam's monotone quantities can be promoted to the ones which become equality in a spatial Schwarzschild manifold. %On the other hand, it is possible to directly find Miao's monotone quantities modeled on spatial Schwarzschild manifolds, not via the conformal transformation, see Oronzio \cite{O}.
	
	In this paper, we give an affirmative answer to this question, that is, we find general monotone quantities associated to $p$-capacitary functions whose model space is a spatial Schwarzschild manifold outside a rotationally symmetric sphere. In order to state  our main result, we first introduce several preliminaries.	
	
	A $3$-dimensional Riemannian manifold $(M, g)$ is said to be a one-end asymptotically flat  if there is a compact subset $K$ of $M$ and a diffeomorphism   $\Phi : M \setminus K \rightarrow \mathbb{R}^{3} \setminus  \overline{\Omega}$, where $\overline{\Omega}$ is a compact subset of $\mathbb{R}^{3}$ and with respect to the standard coordinates on $\mathbb{R}^{3}$, $g$ satisfies
	$$g_{ij} = \delta_{ij} + \sigma_{ij},$$
	where $\sigma$ is a symmetric $(0,2)-$tensor such that $\sigma=O_2\left( |x|^{- \tau} \right)$ for some $\tau > \frac{1}{2}$, which means that
	\begin{equation}\label{af}
		\partial_{J} \sigma = O \left( |x|^{- \tau - |J| } \right) \ \ \text{as} \ x \rightarrow \infty \  \text{for every multi-index} \ J \ \text{with} \ |J| \le 2.
	\end{equation}	
	
	The  ADM mass of $(M, g)$, which has been introduced by the physicists Arnowitt, Deser and Misner in \cite{ADM},  is defined by
	\begin{equation}
		\mathfrak{m}_{ADM} : = \frac{1}{16\pi}\lim\limits_{r \rightarrow \infty} \int_{S_r} \left( \partial_{j} g_{ij}  - \partial_{i} g_{jj} \right) \frac{x^i}{|x|}dA_{\bar g},
	\end{equation}
	where $S_r=\{|x|=r\}$ and $dA_{\bar g}$ is the volume form induced from the Euclidean metric.
	%where $\{ e_{1}, e_{2}, e_{3} \}$ is orthonormal basis in $(\mathbb{R}^{3}, \delta)$, $\nu = \nu^{i} e_{i}$ is the out-pointing normal vector of $S_{r}(0)$ in $(\mathbb{R}^{3}, \delta)$ and $d \bar{\mu}$ is the volume form of $(\mathbb{R}^{3}, \delta)$.	
	%From \cite{B}, we can see that if the scalar curvature $R$ of $(M, g)$ is integrable, then $\mathfrak{m}_{ADM}$ is well-defined.
	The scalar curvature $R$ of $(M, g)$ is assumed to be integrable so that the ADM mass of $(M, g)$ exists and its value does not depend on the asymptotically flat coordinate chart, see \cite{B}.
	
	Given $m>0$, the spatial Schwarzschild manifold of ADM mass $m$ is the $3$-manifold with boundary given by $(\mathcal{M}_{m}^{3}, g_{m})$, where \begin{eqnarray}\label{schwarzschild}
		\mathcal{M}_{m}^{3} := \mathbb{R}^{3} \setminus {B_{\frac{m}{2}}(0)},\quad  g_{m} := \left(1 + \frac{m}{2 |x|}\right)^{4} \bar g,
	\end{eqnarray}
	where $\bar g$ is the Euclidean metric.
	Given $r_0\ge \frac{m}{2}$, we use $(\mathcal{M}_{m, r_0}^{3},  g_{m})$ to denote the spatial Schwarzschild manifold of mass $m$ outside a rotationally symmetric sphere,
	\begin{eqnarray}\label{schwarzschild1}
		\mathcal{M}_{m, r_0}^{3} := \mathbb{R}^{3} \setminus {B_{r_{0}}(0)},\quad  g_{m} = \left(1 + \frac{m}{2 |x|}\right)^{4} \bar g. 
	\end{eqnarray}
	In the case $r_0=\frac{m}{2}$,  it reduces to the spatial Schwarzschild manifold with horizon boundary, namely, $\p \mathcal{M}_{m, \frac{m}{2}}=\p \mathcal{M}_{m}$ is a outmost minimal surface.	
	
	Let $(M, g)$ be a one-end asymptotically flat $3$-manifold with boundary $\p M=\S$. For $p\in (1, 3)$, let $u\in W^{1,p}_{loc}(M)$ be the weak solution to
	\begin{equation}\label{p-laplace}
		\left\{
		\begin{aligned}
			\Delta_{p}u:={\rm div}(|\nabla u|^{p-2}\nabla u) &= 0\ \ {\rm in}\ \ M\\
			u &= 0\ \ {\rm on} \ \ \partial M\\
			u(x) &\rightarrow 1\ \ {\rm as}\ \ |x| \rightarrow \infty . 
		\end{aligned}\right.
	\end{equation}
	The $p-$capacity of $\Sigma$ in $(M, g)$ is defined by
	\begin{eqnarray*}
		\operatorname{Cap}_p(\p M) := \inf \left\{\int_M|\nabla \phi|^p\right\},
	\end{eqnarray*}
	where the infimum is taken over all Lipschitz functions $\phi$ with compact support such that $\phi=1$ at $\Sigma$.
	It is clear that $\operatorname{Cap}_p(\p M)$ is related to the solution $u$ by
	\begin{eqnarray*}
		\operatorname{Cap}_p(\p M)=\int_M|\nabla u|^p=\int_{\{u=t\}}|\nabla u|^{p-1},
	\end{eqnarray*}
	when $\{u=t\}$ is a regular level set of $u$.
	We denote
	\begin{eqnarray*}
		a :=\frac{3-p}{p-1}, \quad \hbox{ and }\quad \mathfrak{c}_{p} : = \left( \frac{\operatorname{Cap}_{p}}{4 \pi} \right)^{\frac{1}{p-1}}.
	\end{eqnarray*}
	It is known that, $u$ has an asymptotic expansion (See \cite{KV},  \cite[Theorem 4.1]{C})
	\begin{equation} \label{equ:asymptotic u}
		u=1 - \frac{\mathfrak{c}_{p}}{a} r^{-a} + O_2\left( r^{- a - \tilde{\tau}} \right), \quad \text{as} \ r=|x|\rightarrow \infty, 
	\end{equation}
	for any $0 < \tilde{\tau} < \min \{ \tau,1 \} $.
	
	Our main result is as following.
	\begin{Theorem}\label{Thm:1.02}
		Let $(M,g)$ be a  $3$-dimensional, complete, one-end asymptotically flat manifold with boundary $\p M=\Sigma$. Assume that $\S$ is connected and $H_2(M,\Sigma) = 0$ and $(M, g)$ has nonnegative scalar curvature. Let $p\in (1, 3)$ and $u$ be the weak solution to \eqref{p-laplace}.
		For any $k\in(-1,1)$,
		denote 
		$$
		m:=2 \operatorname{sgn}(k) \left(I_a(k)\mathfrak{c}_{p}\right)^{\frac{1}{a}}, \hbox{ and } r_{0}:=\frac{m}{2k}=|k|^{-1}\left(I_a(k)\mathfrak{c}_{p}\right)^{\frac{1}{a}},
		$$ 
		where $a:=\frac{3-p}{p-1}$, $\operatorname{sgn}(k)$ is the sign function on $k$ and 
		$$I_{a}(k):= \int_{0}^{|k|} s^{a-1}(1+\operatorname{sgn}(k)s)^{-2a} ds.$$
		Then the following inequalities hold:
		\begin{equation}\label{geom-ineq-10}
			\begin{aligned}
				& 4 \pi - \frac{(1+k)^{2}}{(1-k)^{2}} (\eta(r_{0}))^{2} \int_{\Sigma} |\nabla u|^2
				\\ \ge & \frac{(1+k)^{2}r_{0}}{\eta(r_{0})} \frac{(1-a\eta(r_{0}))}{m} \left\{
				4 \pi \frac{(1-k)^{2}}{(1+k)^{2}} - \int_{\Sigma} (\frac{H}{2})^{2} + \int_{\Sigma} \left( \frac{H}{2}-\eta(r_{0})|\nabla u| \right)^{2}
				\right\},
			\end{aligned}
		\end{equation}
		\begin{equation}\label{geom-ineq-20}
			\begin{aligned}
				& 4 \pi -\frac{(1+k)^{2}}{(1-k)^{2}} (\eta(r_{0}))^{2}  \int_{\Sigma} |\nabla u|^2  \le 8\pi a\left( \mathfrak{m}_{ADM}-m \right) \frac{(1-a\eta(r_{0}))}{m} .
			\end{aligned}
		\end{equation}
		%\begin{equation}\label{geom-ineq-30}
		%	\begin{aligned}
			%		& \frac{(1+k)^{2}}{2k}
			%		\left\{
			%		4 \pi \frac{(1-k)^{2}}{(1+k)^{2}}
			%		- \eta(r_{0}) \int_{\Sigma} H |\nabla u|
			%		+ (\eta(r_{0}))^{2} \int_{\Sigma} |\nabla u|^2
			%		\right\}
			%		\le 8\pi a\left(\frac{\mathfrak{m}_{ADM}}{m}-1\right)\eta(r_0).
			%	\end{aligned}
		%\end{equation}
		Here $\eta(t)$ is a function defined by \eqref{equ:eta} below and $$\eta(r_{0})=\mathfrak{c}_{p}^{-1}r_{0}^{a}\left(1+k\right)^{2a-1}\left(1-k\right). $$
		Moreover, equality in each of the above inequalities holds for some $k$ if and only if $(M, g)$ is isometric to either the spatial Schwarzschild manifold of mass $m$ outside a rotationally symmetric ball, $(\mathcal{M}_{m, r_0}^{3},  g_{m})$ with $m=2r_0k>0$ or the Euclidean space outside a rotationally symmetric ball $(\mathbb{R}^3\setminus B_{r_0}(0), \delta)$.
		
	\end{Theorem}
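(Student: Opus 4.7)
Since no conformal change of metric preserves $p$-harmonicity for general $p\in(1,3)$, Miao's conformal trick from the $p=2$ case cannot be imported verbatim, and the strategy must instead construct directly one-parameter families of monotone level-set quantities $\Phi_k^{(j)}(t)$ on $\Sigma_t := \{u=t\}$ whose value is constant on the model space $(\mathcal M_{m,r_0}^3, g_m)$, one quantity for each of \eqref{geom-ineq-10} and \eqref{geom-ineq-20}. The first step is to identify the correct ansatz by reverse engineering: compute the explicit radial $p$-harmonic function $u_m$ on the spatial Schwarzschild manifold (reducing \eqref{p-laplace} to an ODE in the radial variable), read off $|\nabla u_m|$ and the mean curvature of coordinate spheres, and then seek a linear combination of the natural level-set integrals $\int_{\Sigma_t}|\nabla u|^{p-1}$, $\int_{\Sigma_t}H|\nabla u|^{p-1}$, and suitable auxiliary gradient-power terms whose weights render $\Phi_k^{(j)}$ constant on $u_m$. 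The function $\eta(t)$ and the algebraic role of $k\in(-1,1)$ should fall out of this calibration.

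To establish monotonicity on the general manifold, I would differentiate $\Phi_k^{(j)}(t)$ along level sets via the coarea formula together with $\partial_\nu = -|\nabla u|^{-1}\nabla u$, and then apply the Bochner identity for $p$-harmonic functions combined with the traced Gauss equation of $\Sigma_t \subset M$. The topological hypotheses ($\Sigma$ connected and $H_2(M,\Sigma)=0$) and asymptotic flatness together guarantee that every regular level set is a single topological $2$-sphere, so Gauss-Bonnet yields $\int_{\Sigma_t}K_{\Sigma_t} = 4\pi$. The scalar curvature term enters with a favourable sign via $R_g\ge 0$, and the residual Bochner pieces involving the second fundamental form of $\Sigma_t$ and the tangential Hessian of $u$ must reorganize into a manifestly non-negative quadratic form, with the $k$-dependent weights being precisely those that make the coefficients close. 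This sign analysis is the technical heart of the theorem and replaces Miao's conformal deformation.

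The two inequalities then follow by comparing endpoint values of $\Phi_k^{(j)}$. As $t \to 0^+$, $\Phi_k^{(j)}$ becomes a linear combination of $\int_\Sigma|\nabla u|^2$, $\int_\Sigma H|\nabla u|$, and $4\pi$ (via Gauss-Bonnet with $\chi(\Sigma)=2$); as $t\to 1^-$, the asymptotic expansion \eqref{equ:asymptotic u}, together with its higher-order terms encoding the ADM mass, converts the level-set integral to an explicit expression in $\mathfrak{m}_{ADM}$, $\mathfrak{c}_p$, and $k$. Inequality \eqref{geom-ineq-20} then falls out directly, while \eqref{geom-ineq-10} is obtained after expanding the squared term $(H/2 - \eta(r_0)|\nabla u|)^2$ and identifying the cross-term with $\int_\Sigma H|\nabla u|$. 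For rigidity, saturating the monotonicity forces $R_g \equiv 0$, each regular level set to be totally umbilical with $|\nabla u|$ constant on it, and the resulting warped-product ODE then isolates the Schwarzschild solution (or the flat model $\mathbb{R}^3 \setminus B_{r_0}(0)$ when $k = 0$). The main obstacle, as noted, is verifying that the residual Bochner quadratic form has the correct sign for every admissible $k$ — a careful algebraic check guided throughout by the Schwarzschild profile.
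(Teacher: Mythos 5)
Your plan has the right architecture and matches the paper's in outline (model computation on Schwarzschild $\to$ monotone level-set quantity $\to$ endpoint comparison $\to$ rigidity), but it has two genuine gaps that would stop a written-out proof.

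First, and most seriously, your monotonicity argument is carried out as though $u$ were smooth with nonvanishing gradient, so that level sets foliate $M$ smoothly and the Bochner/Gauss-equation computation can be done pointwise. For $p\neq 2$, $p$-harmonic functions are only $C^{1,\beta}$ near the critical set $\{\nabla u = 0\}$, which in general is nonempty, and the level sets need not be regular. The whole derivative computation for the monotone quantity breaks down at such points. The paper handles this by approximating $u$ with solutions $v_\eps$ of the regularized equation $\mathrm{div}(|\nabla v_\eps|_\eps^{p-2}\nabla v_\eps)=0$ (with $|\nabla v_\eps|_\eps^2 = |\nabla v_\eps|^2 + \eps^2$), proving an $\eps$-dependent monotonicity estimate with an explicit error term, and letting $\eps\to 0$; this occupies all of Section~3.2 and is where Hirsch--Miao--Tam's Kato-type inequality is extended from $1<p\le 2$ to $1<p<3$. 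You need something like this, and your sketch never acknowledges the issue.

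Second, ``the $k$-dependent weights being precisely those that make the coefficients close'' is where the actual content lives, and you leave it as a placeholder. The paper shows the weights $(\alpha,\beta,\gamma)$ in the ansatz $F(t)=4\pi\gamma(t)+\alpha(t)\int_{\Sigma_t}H|\nabla u|+\beta(t)\int_{\Sigma_t}|\nabla u|^2$ must satisfy a concrete first-order linear ODE system driven by $\eta$ and $f'$; that system is solved explicitly in the Appendix (yielding the two-parameter family in $C_1,C_2$), and a separate positivity lemma characterizes exactly when $\alpha(t)\ge 0$ on $[r_0,\infty)$ --- which is the condition that makes the quadratic form nonnegative. Without stating the ODE system, solving it, and verifying the sign of $\alpha$, there is no proof; the two target inequalities then come from two specific choices of $(C_1,C_2)$, another step you omit. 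Two smaller points: Gauss--Bonnet only gives $\int_{\Sigma_t}K\le 4\pi$ (the level set is connected but could a priori have higher genus), not equality as you wrote, although the inequality is all that is needed; and the evaluation at the infinite end requires a nontrivial asymptotic estimate of the Hawking-mass--type quantity $\frac{t}{4}(16\pi - \int_{\Sigma_t}H^2)$ in terms of $\mathfrak{m}_{ADM}$, which the paper imports from Agostiniani--Mantegazza--Mazzieri--Oronzio and which your ``higher-order terms encoding the ADM mass'' glosses over.
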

	%%%%%%
	\begin{Remark}
		\item[(i)] When $k=0$,  the quantities are understood in the limit sense, that is, $$I_{a}(0)=0, \quad m=0, \quad r_{0}=\left(\frac{\mathfrak{c}_{p}}{a}\right)^{\frac{1}{a}}, \quad \eta(r_{0})=\frac{1}{a}, \quad \frac{1-a\eta(r_{0})}{m}=\lim_{k\to 0}\frac{1-a\eta(r_{0})}{m}=\frac{a^2}{a+1}\left(\frac{\mathfrak{c}_{p}}{a}\right)^{-\frac{1}{a}}.$$ 
		%Thus we can get the positive mass theorem immediately by taking $k=0$.  
		\item[(ii)] When $-1<k<0$, equality holds for $(\mathcal{M}_{m, r_0}^{3},  g_{m})$ with $m<0$. This is the so-called Schwarzschild ZAS metric, which has cone-type singularities. See Bray-Jauregui \cite{BJ} and Miao \cite{M2} for more discussion on this metric. Hence, for our theorem, equality cannot occur for  $-1<k<0$. 	\end{Remark}
	
	When $k=1$, we may regard $r_0=\frac{m}{2}$  and $\frac{\eta(r_0)}{1-k}=I_{a}(1)2^{2a-1}$ in Theorem \ref{Thm:1.02} so that we have the following.
	%%%%%%
	\begin{Theorem}\label{Thm:1.01}
		Let $(M,g)$ and $u$ be as in Theorem \ref{Thm:1.02}. Denote $m=2 \left(I_a(1)\mathfrak{c}_{p}\right)^{\frac{1}{a}}.$
		Then we have
		\begin{equation}\label{geom-ineq-1}
			\begin{aligned}
				&4 \pi
				+ 2 \int_{\Sigma} H |\nabla u|
				- 2^{4a} (I_{a}(1))^{2} \int_{\Sigma} |\nabla u|^2
				\ge 0, \end{aligned}
		\end{equation}
		\begin{equation}\label{geom-ineq-2}
			\begin{aligned}&4 \pi (1+2a)
				- 2^{4a} (I_{a}(1))^{2} \int_{\Sigma} |\nabla u|^2
				\le 8\pi a \frac{\mathfrak{m}_{ADM}}{m}. 		\end{aligned}	\end{equation}
		%\begin{equation}\label{geom-ineq-3}
		%	\begin{aligned} &4 \pi a
			%	- \int_{\Sigma} H|\nabla u|
			%	\le 4\pi a \frac{\mathfrak{m}_{ADM}}{m}.
			%\end{aligned}
			%\end{equation}
			Moreover, equality in each of the above inequalities holds if and only if $(M, g)$ is isometric to the  spatial Schwarzschild manifold  of mass $m$, $(\mathcal{M}_{m}^{3},  g_{m})$.
		\end{Theorem}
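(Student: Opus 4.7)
The plan is to obtain Theorem \ref{Thm:1.01} as the limit $k\to 1^{-}$ of Theorem \ref{Thm:1.02}, exactly as suggested by the paragraph preceding the statement. First I would carry out the asymptotic analysis of the $k$-dependent constants. From $m=2\bigl(I_{a}(k)\mathfrak{c}_{p}\bigr)^{1/a}$, $r_{0}=m/(2k)$, and $\eta(r_{0})=\mathfrak{c}_{p}^{-1}r_{0}^{a}(1+k)^{2a-1}(1-k)$, one checks that as $k\to 1^{-}$,
\[
m\to 2\bigl(I_{a}(1)\mathfrak{c}_{p}\bigr)^{1/a},\qquad r_{0}\to \tfrac{m}{2},\qquad \frac{\eta(r_{0})}{1-k}\to I_{a}(1)\,2^{2a-1},\qquad 1-a\eta(r_{0})\to 1,
\]
so that $\tfrac{(1+k)^{2}}{(1-k)^{2}}(\eta(r_{0}))^{2}\to 2^{4a}(I_{a}(1))^{2}$, which matches the coefficient of $\int_{\Sigma}|\nabla u|^{2}$ in \eqref{geom-ineq-1} and \eqref{geom-ineq-2}.

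Next I would pass to the limit on both sides of each inequality. The left-hand sides of \eqref{geom-ineq-10} and \eqref{geom-ineq-20} converge directly to $4\pi-2^{4a}(I_{a}(1))^{2}\int_{\Sigma}|\nabla u|^{2}$. For \eqref{geom-ineq-20}, the factor $(1-a\eta(r_{0}))/m\to 1/m$, turning the right-hand side into $8\pi a(\mathfrak{m}_{ADM}-m)/m$, which after rearrangement is exactly \eqref{geom-ineq-2}. For \eqref{geom-ineq-10}, I would expand the brace as
\[
4\pi\frac{(1-k)^{2}}{(1+k)^{2}} - \eta(r_{0})\int_{\Sigma}H|\nabla u| + (\eta(r_{0}))^{2}\int_{\Sigma}|\nabla u|^{2},
\]
then multiply by the prefactor, which using $r_{0}/m = 1/(2k)$ simplifies to $\frac{(1+k)^{2}(1-a\eta(r_{0}))}{2k\,\eta(r_{0})}$. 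Since $(1-k)^{2}/\eta(r_{0}) = O(1-k)\to 0$ and $\eta(r_{0})\to 0$, the first and third summands vanish in the limit, while the middle one contributes $-2\int_{\Sigma}H|\nabla u|$. Moving this term to the left yields \eqref{geom-ineq-1}.

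The main obstacle is the rigidity statement. Taking limits of a one-parameter family of inequalities does not automatically transfer the equality case, because equality in the limit need not arise as the $k\to 1^{-}$ limit of equalities for $k<1$; moreover, at $k=1$ the model $(\mathcal{M}_{m,m/2}^{3},g_{m})=(\mathcal{M}_{m}^{3},g_{m})$ sits on the boundary of the family of Schwarzschild exteriors. To handle this I would go back to the monotone quantity underlying the proof of Theorem \ref{Thm:1.02} and renormalize it by $(1-k)^{-1}$, which is exactly the rate at which $\eta(r_{0})$ degenerates; the resulting expression should still be monotone along the level sets $\{u=t\}$ at $k=1$. Equality in \eqref{geom-ineq-1} or \eqref{geom-ineq-2} would then force this renormalized quantity to be constant along every regular level set, yielding that each level set is totally umbilic with constant $|\nabla u|$ and mean curvature, so that $(M,g)$ is a warped product over $\Sigma$. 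Combining the resulting ODE with the asymptotics \eqref{equ:asymptotic u} and with the minimality $H\equiv 0$ on $\Sigma$ (which the $k=1$ equality forces since $r_{0}=m/2$ is the horizon) would identify $(M,g)$ isometrically with $(\mathcal{M}_{m}^{3},g_{m})$.
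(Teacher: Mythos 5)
Your limit computation for the two inequalities is correct: the asymptotics of $m$, $r_0$, $\eta(r_0)/(1-k)$, and $1-a\eta(r_0)$ are all right, and the three summands in the brace of \eqref{geom-ineq-10} do behave exactly as you describe after multiplying by the prefactor $\frac{(1+k)^2(1-a\eta(r_0))}{2k\,\eta(r_0)}$, so both \eqref{geom-ineq-1} and \eqref{geom-ineq-2} do follow by passing $k\to 1^-$ in Theorem \ref{Thm:1.02}. However, this detour is unnecessary and creates the rigidity problem you then have to fight: notice that Theorem \ref{thm:Monot} is stated for $k\in(-1,0)\cup(0,1]$, so it already applies \emph{directly} at $k=1$, and that is what the paper does. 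At $k=1$ one has $\eta(r_0)=0$, and the coefficients $\alpha(r_0)=-2C_1$, $\beta(r_0)=(C_2\mathfrak{c}_p\tfrac{m}{a}+C_1)2^{4a}(I_a(1))^2$, $\gamma(r_0)=-(C_2\mathfrak{c}_p\tfrac{m}{a}+C_1)$ are all finite (no degeneracy, no need to renormalize by $(1-k)^{-1}$). Combining the monotonicity $F(r_0)\ge\lim_{t\to\infty}F(t)$ with Proposition \ref{limit-F} gives Corollary \ref{prop:6.2}, and then \eqref{geom-ineq-1} and \eqref{geom-ineq-2} are simply the choices $(C_2,C_1)=(0,-1)$ and $(C_2,C_1)=(\mathfrak{c}_p^{-1},0)$.

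The genuine gap in your proposal is the rigidity part. Passing to the limit in a family of inequalities does not transfer the characterization of equality, as you correctly anticipate, but the ``renormalization'' you sketch is not worked out and rests on a misconception: the monotone quantity $F$ does not degenerate as $k\to 1^-$ (the $\eta(r_0)$ in denominators is an artifact of how \eqref{geom-ineq-10} and \eqref{geom-ineq-20} are algebraically packaged, not a feature of $F$ itself). The cleaner and in fact necessary move is to invoke the rigidity clause of Theorem \ref{thm:Monot} directly at $k=1$: constancy of $F$ gives $|\nabla^T|\nabla u||=0$, $|\overset{\circ}{h}|=0$, $R_M=0$, $H=2\eta(u)|\nabla u|$, and $\int_{\Sigma_t}K=4\pi$, and then the computation in Section 4 identifies $(M,g)$ with $(\mathcal{M}^3_{m,m/2},g_m)=(\mathcal{M}^3_m,g_m)$. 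Minimality of $\Sigma$ then comes out as a \emph{consequence} of $H=2\eta(u)|\nabla u|$ together with $\eta(r_0)=0$; your parenthetical appeal to ``$H\equiv 0$ since $r_0=m/2$ is the horizon'' comes close to assuming what must be proved.
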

		%%%%%%
		
		%%%%%%
		\begin{Remark}
			\item[(i)] By a simple calculation, we see that when $p=2$, inequalities \eqref{geom-ineq-10} and \eqref{geom-ineq-20} reduce to \eqref{miao-ineq3} and \eqref{miao-ineq4} (with $k-1=l$), which was proved by Miao via a conformal promotion of corresponding inequalities modelled on the Euclidean space outside a ball. 
			\item[(ii)] The integral $I_{a}(1)$ is indeed  half of the Beta function $\mathcal{B}(a,a)$. See the details in Remark \ref{Rem:Beta} in the Appendix. 
		\end{Remark}
		%%%%%%
		From Theorem \ref{Thm:1.01} and Theorem \ref{Thm:1.02}, we get the following Bray-Miao-type mass-to-capacity inequality and capacity-to-area inequality.
		%%%%%%
		\begin{Theorem}\label{Thm:1.04}
			Let $(M,g)$ be as in  Theorem \ref{Thm:1.02}.
			Let $k\in (-1, 1]$ be such that
			$$
			1 - \frac{1}{16\pi} \int_{\S} H^{2}  = \frac{4k}{(1+k)^{2}}.
			$$
			%and $m=2 \left( I_{a}(k) \mathfrak{c}_{p} \right)^{\frac{1}{a}}$, $r_0=\frac{m}{2k}$.
			%Under the assumptions of \ref{Thm:1.01}, suppose that one of the following condition holds, \\
			%$(i)$ there is some $\delta>\frac{1}{2}\mathfrak{c}_{p}^{1-p}$ such that
			%\begin{equation*}
			%	H \le \delta \left( |\nabla u|^{p-2}-2^{4a+1}(I_{a}(1))^{2}\mathfrak{c}_{p}^{p-1}|\nabla u| \right).
			%\end{equation*}
			%$(ii)$ the boundary of $\partial M$ is minimal. \\
			Then we have
			\begin{eqnarray}
				&&\mathfrak{m}_{ADM} \ge 2 \operatorname{sgn}(k) \left(I_a(k)\mathfrak{c}_{p}\right)^{\frac{1}{a}},\label{geom-ineq-4}\\
				&&\sqrt{\frac{|\Sigma|}{16\pi}} \ge \frac{(1+k)^{2}}{2|k|} \left( I_{a}(k) \mathfrak{c}_{p} \right)^{\frac{1}{a}}.\label{geom-ineq-5}
			\end{eqnarray}
			Moreover,  equality in each of the above inequalities holds if and only if $(M, g)$ is isometric to the spatial Schwarzschild manifold of mass $m=2 \operatorname{sgn}(k) \left(I_a(k)\mathfrak{c}_{p}\right)^{\frac{1}{a}}$ outside a rotationally symmetric ball, $(\mathcal{M}_{m, r_0}^{3},  g_{m})$ with $r_0=|k|^{-1}\left(I_a(k)\mathfrak{c}_{p}\right)^{\frac{1}{a}}$ when $k\in (0,1]$ and $(M, g)$ is isometric to $(\mathbb{R}^3\setminus B_{r_0}(0), \delta)$ when $k=0$.
		\end{Theorem}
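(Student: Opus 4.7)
The plan is to obtain Theorem \ref{Thm:1.04} as a direct algebraic consequence of the sharp monotonicity-based inequalities \eqref{geom-ineq-10}--\eqref{geom-ineq-20} of Theorem \ref{Thm:1.02} (for $k\in(-1,1)$) and \eqref{geom-ineq-1}--\eqref{geom-ineq-2} of Theorem \ref{Thm:1.01} (for $k=1$), exploiting a cancellation triggered by the defining relation for $k$.

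First I would rewrite the hypothesis on $k$ as
\begin{equation*}
\int_{\Sigma}\lto\frac{H}{2}\rto^{2}=4\pi\frac{(1-k)^{2}}{(1+k)^{2}},
\end{equation*}
so that the first two summands inside the braces on the right-hand side of \eqref{geom-ineq-10} cancel, leaving only the non-negative integral $\int_{\Sigma}(H/2-\eta(r_{0})|\nabla u|)^{2}$. Chaining the resulting lower bound with the upper bound \eqref{geom-ineq-20} and dividing by the factor $a(1-a\eta(r_{0}))/m$, whose positivity on $(-1,1)\setminus\{0\}$ is discussed below, yields
\begin{equation*}
\mathfrak{m}_{ADM}\ge m=2\operatorname{sgn}(k)\bigl(I_{a}(k)\mathfrak{c}_{p}\bigr)^{1/a},
\end{equation*}
which is \eqref{geom-ineq-4}. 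The endpoint $k=1$, where $H\equiv 0$, uses Theorem \ref{Thm:1.01} instead: \eqref{geom-ineq-1} gives $2^{4a}(I_{a}(1))^{2}\int_{\Sigma}|\nabla u|^{2}\le 4\pi$, and inserting this bound into \eqref{geom-ineq-2} immediately produces $\mathfrak{m}_{ADM}\ge m$. The degenerate case $k=0$ is handled by the limit conventions recorded in the remark following Theorem \ref{Thm:1.02}, and reduces the inequality to the Riemannian positive mass theorem.

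For the capacity-to-area estimate \eqref{geom-ineq-5}, I would use the non-negativity of the left-hand side of \eqref{geom-ineq-10} (established above) together with H\"older's inequality. Substituting $\eta(r_{0})=\mathfrak{c}_{p}^{-1}r_{0}^{a}(1+k)^{2a-1}(1-k)$ collapses the resulting inequality to
\begin{equation*}
\int_{\Sigma}|\nabla u|^{2}\le \frac{4\pi\mathfrak{c}_{p}^{2}}{r_{0}^{2a}(1+k)^{4a}}.
\end{equation*}
On the other hand, H\"older with conjugate exponents $\tfrac{2}{p-1}$ and $\tfrac{2}{3-p}$ applied to the identity $\operatorname{Cap}_{p}(\partial M)=\int_{\Sigma}|\nabla u|^{p-1}$ furnishes $\operatorname{Cap}_{p}(\partial M)^{2/(p-1)}\le |\Sigma|^{a}\int_{\Sigma}|\nabla u|^{2}$. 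Substituting $\operatorname{Cap}_{p}=4\pi\mathfrak{c}_{p}^{p-1}$ causes every $\mathfrak{c}_{p}$-factor to cancel, reducing the combined inequality to $|\Sigma|\ge 4\pi r_{0}^{2}(1+k)^{4}$, and after rewriting $r_{0}(1+k)^{2}/2=\frac{(1+k)^{2}}{2|k|}(I_{a}(k)\mathfrak{c}_{p})^{1/a}$ this is precisely \eqref{geom-ineq-5}. The case $k=1$ is identical, starting from the $\int_{\Sigma}|\nabla u|^{2}$-bound furnished by \eqref{geom-ineq-1} with $H\equiv 0$.

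The main obstacle is the sign claim $(1-a\eta(r_{0}))/m> 0$ on $(-1,1)\setminus\{0\}$, which is what makes the cancellation above produce an inequality of the correct direction. After the substitution $s=|k|t$ in $I_{a}(k)$, one has
\begin{equation*}
a\eta(r_{0})=a(1+k)^{2a-1}(1-k)\int_{0}^{1}t^{a-1}(1+kt)^{-2a}\,dt,
\end{equation*}
a smooth function of $k\in(-1,1)$ equal to $1$ at $k=0$ with derivative $-\tfrac{2}{a+1}<0$ there; extending this monotonicity to the whole interval via differentiation under the integral sign gives $a\eta(r_{0})<1$ for $k>0$ and $a\eta(r_{0})>1$ for $k<0$, so that the ratio $(1-a\eta(r_{0}))/m$ is indeed positive. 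Finally, equality in \eqref{geom-ineq-4} traces back to equality in the underlying monotonicity inequalities, while equality in \eqref{geom-ineq-5} additionally requires the H\"older equality condition $|\nabla u|\equiv\mathrm{const}$ on $\Sigma$; the rigidity statements of Theorems \ref{Thm:1.02} and \ref{Thm:1.01} then identify $(M,g)$ with the rotationally symmetric model $(\mathcal{M}_{m,r_{0}}^{3},g_{m})$ (or $(\RR^{3}\setminus B_{r_{0}}(0),\delta)$ when $k=0$).
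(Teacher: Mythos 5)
Your proposal is correct and follows essentially the same route as the paper: for $k\in(-1,0)\cup(0,1)$ you substitute the defining relation for $k$ into the brace on the right of \eqref{geom-ineq-10} to make it a single non-negative square integral, chain this lower bound with \eqref{geom-ineq-20}, and divide by the positive factor $a(1-a\eta(r_0))/m$ to obtain $\mathfrak{m}_{ADM}\ge m$; the capacity-to-area bound then follows from non-negativity of the left side of \eqref{geom-ineq-10} and the H\"older inequality with exponents $\tfrac{2}{p-1},\tfrac{2}{3-p}$, exactly as in the paper; the endpoint $k=1$ (minimal boundary) and the degenerate $k=0$ are dispatched via Theorem \ref{Thm:1.01} and the limit conventions, again matching the paper.

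One remark: you correctly identify the sign claim $(1-a\eta(r_0))/m>0$ on $(-1,1)\setminus\{0\}$ as the hinge of the cancellation. Your justification (value $1$ and derivative $-2/(a+1)$ at $k=0$, then asserting monotonicity of $k\mapsto a\eta(r_0)$ on all of $(-1,1)$) is only a sketch: the computation establishes local behavior at $k=0$, and the global monotonicity would still need an actual estimate on $\phi'(k)$ for $k\ne 0$. That said, the paper itself states this positivity without proof in the derivation of \eqref{geom-ineq-10}--\eqref{geom-ineq-20}, so you are not introducing a new gap; if anything you supply more evidence for the claim than the source does.
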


		When $k=1$, we have the following
		\begin{Theorem}\label{Thm:1.03}
			Let $(M,g)$ be as in  Theorem \ref{Thm:1.02}. Assume in addition that $\p M=\S$ is minimal.
			%Under the assumptions of \ref{Thm:1.01}, suppose that one of the following condition holds, \\
			%$(i)$ there is some $\delta>\frac{1}{2}\mathfrak{c}_{p}^{1-p}$ such that
			%\begin{equation*}
			%	H \le \delta \left( |\nabla u|^{p-2}-2^{4a+1}(I_{a}(1))^{2}\mathfrak{c}_{p}^{p-1}|\nabla u| \right).
			%\end{equation*}
			%$(ii)$ the boundary of $\partial M$ is minimal. \\
			Then we have
			\begin{eqnarray}
				&&\mathfrak{m}_{ADM} \ge 2 \left( I_{a}(1) \mathfrak{c}_{p} \right)^{\frac{1}{a}}, \label{geom-ineq-6}\\
				&&\sqrt{\frac{|\Sigma|}{16\pi}} \ge 2 \left( I_{a}(1) \mathfrak{c}_{p} \right)^{\frac{1}{a}}. \label{geom-ineq-7}
			\end{eqnarray}
			Moreover, equality in each of the above inequalities holds if and only if $(M, g)$ is isometric to the  spatial Schwarzschild manifold of mass $m=2 \left( I_{a}(1) \mathfrak{c}_{p} \right)^{\frac{1}{a}}$, $(\mathcal{M}_{m}^{3},  g_{m})$.
		\end{Theorem}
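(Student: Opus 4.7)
The plan is to deduce Theorem \ref{Thm:1.03} directly from Theorem \ref{Thm:1.01}, using the minimal-boundary hypothesis to eliminate mean-curvature terms and combining the two inequalities through elementary algebra, then invoking H\"older's inequality for the area bound.

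Since $\Sigma$ is minimal, $H\equiv 0$, so \eqref{geom-ineq-1} reduces to
\[
2^{4a}(I_{a}(1))^{2}\int_{\Sigma}|\nabla u|^{2} \le 4\pi.
\]
Rewriting the left-hand side of \eqref{geom-ineq-2} as
\[
4\pi(1+2a) - 2^{4a}(I_{a}(1))^{2}\int_{\Sigma}|\nabla u|^{2} = 8\pi a + \Bigl(4\pi - 2^{4a}(I_{a}(1))^{2}\int_{\Sigma}|\nabla u|^{2}\Bigr) \ge 8\pi a,
\]
the inequality \eqref{geom-ineq-2} becomes $8\pi a \le 8\pi a\,\mathfrak{m}_{ADM}/m$, i.e.\ $\mathfrak{m}_{ADM}\ge m=2(I_{a}(1)\mathfrak{c}_{p})^{1/a}$, which is \eqref{geom-ineq-6}.

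For the capacity-to-area inequality \eqref{geom-ineq-7}, I would apply H\"older's inequality with conjugate exponents $2/(p-1)$ and $2/(3-p)$ on a regular level set $\{u=t\}$:
\[
\operatorname{Cap}_{p}(\Sigma) = \int_{\{u=t\}}|\nabla u|^{p-1} \le \Bigl(\int_{\{u=t\}}|\nabla u|^{2}\Bigr)^{(p-1)/2} |\{u=t\}|^{(3-p)/2},
\]
then pass to the limit as $t\downarrow 0$ along regular values and combine with the upper bound $\int_{\Sigma}|\nabla u|^{2} \le 4\pi/[2^{4a}(I_{a}(1))^{2}]$ from the first step. Raising to the power $2/(p-1)$ and using $\operatorname{Cap}_{p}=4\pi\mathfrak{c}_{p}^{p-1}$ together with $a=(3-p)/(p-1)$, the resulting estimate simplifies to $|\Sigma|\ge 64\pi(I_{a}(1)\mathfrak{c}_{p})^{2/a}$, equivalent to \eqref{geom-ineq-7}.

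For the rigidity statements, equality in \eqref{geom-ineq-6} forces equality in both \eqref{geom-ineq-1} (with $H=0$) and \eqref{geom-ineq-2}, so by Theorem \ref{Thm:1.01}'s rigidity clause $(M,g)$ is isometric to $(\mathcal{M}_{m}^{3},g_{m})$. Equality in \eqref{geom-ineq-7} additionally requires equality in H\"older, forcing $|\nabla u|$ constant on $\Sigma$; but this is automatic on the Schwarzschild horizon, so again Theorem \ref{Thm:1.01} yields the same identification. The main technical obstacle I anticipate is justifying the passage to the limit $t\downarrow 0$ in the H\"older step, which requires controlling the convergence of $|\{u=t\}|$ and $\int_{\{u=t\}}|\nabla u|^{2}$ to their counterparts on $\Sigma$; this hinges on the $p$-harmonic boundary regularity of $u$ near the smooth minimal boundary, an issue of analytic rather than geometric nature.
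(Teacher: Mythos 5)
Your proof is correct and follows essentially the same route as the paper's: set $H\equiv 0$ in \eqref{geom-ineq-1}, combine with \eqref{geom-ineq-2} to get $\mathfrak{m}_{ADM}\ge m$, then apply H\"older's inequality to obtain the area bound, with rigidity inherited from Theorem~\ref{Thm:1.01}. The only difference is cosmetic: the paper applies H\"older directly on $\Sigma$ (the boundary integrals $\int_\Sigma|\nabla u|^{p-1}$, $\int_\Sigma|\nabla u|^2$ are already well-defined since they appear in Theorem~\ref{Thm:1.01}, where $u$ is $C^{1,\alpha}$ up to the smooth boundary), so the $t\downarrow 0$ limiting argument you anticipate needing is not actually required.
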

		%%%%%%
		%%%%%%
		%%%%%%
		\begin{Remark}\
			
			\begin{itemize}
				\item[(i)] We regard $r_{0}=(\frac{\mathfrak{c}_{p}}{a})^{\frac{1}{a}}$ when $k=0$. The inequalities \eqref{geom-ineq-4} and \eqref{geom-ineq-5} still hold true in Theorem \ref{Thm:1.04} for $k=0$ with  equality holding if and only if $(M, g)$ is isometric to the standard Euclidean space outside a rotationally symmetric ball, $(\mathbb{R}^{3}\setminus B_{r_{0}},\delta)$. See Remark \ref{rmk1.3} (iii) below for a detailed explanation. 
				\item[(ii)] For $p=2$, Theorem \ref{Thm:1.03} and Theorem \ref{Thm:1.04} have been first proved by Bray \cite{Br} and Bray-Miao \cite{BM} via Huisken-Ilmanen's weak inverse mean curvature flow. In particular, \eqref{geom-ineq-4} reduces to \eqref{mass-to-capacity}. Miao \cite{M1} gave a new proof by using harmonic functions. Moreover,  Miao \cite{M1} removed the assumption on the nonnegativity of Hawking mass.
				\item[(iii)] For general $p\in (1, 3)$, under the additional assumption of nonnegative Hawking mass, Theorem \ref{Thm:1.03} and Theorem \ref{Thm:1.04}  have been first proved by Xiao \cite{Xiao} via Huisken-Ilmanen's weak  inverse mean curvature flow, following the method of Bray \cite{Br} and Bray-Miao \cite{BM}. Recently,  Benatti-Fogagnolo-Mazzieri also got the Bray-Miao and Xiao's mass-to-capacity inequality in terms of Huisken's isoperimetric mass, see \cite[Theorem 2.15]{BFM1}. 
				\item[(iv)] As $p\rightarrow1$, inequality \eqref{geom-ineq-6} reduces to the Riemannian Penrose inequality \eqref{RPI}.
				
			\end{itemize}
		\end{Remark}
		
		%%%%%%
		We prove Theorem \ref{Thm:1.01} via exhibiting monotonic quantity for $p$-harmonic functions.
		To illustrate the monotonicity quantity, we shall use the following three one-variable functions. For any $k\in(-1,0)\cup(0,1]$,
		let $m=2 \operatorname{sgn}(k) \left(I_a(k)\mathfrak{c}_{p}\right)^{\frac{1}{a}}$. We set
		\begin{equation}\label{ode-solution1}
			\begin{aligned}
				\alpha(t)
				=& t\left(1+\frac{m}{2t}\right)^{2}
				\left\{ \left( C_{2}\mathfrak{c}_{p}+C_{1}\frac{a}{m} \frac{I_{a}(\frac{m}{2t})}{I_{a}(k)}  \right)
				\eta(t)-C_{1}\frac{1}{m} \right\},\\
				\beta(t)
				=&- \eta(t)\alpha(t)
				+ \left( C_{2}\mathfrak{c}_{p}\frac{m}{a}+C_{1} \frac{I_{a}(\frac{m}{2t})}{I_{a}(k)} \right) \mathfrak{c}_{p}^{-2} t^{2a}\left( 1+\frac{m}{2t}\right)^{4a},\\
				\gamma(t)
				=&- \mathfrak{c}_{p}^{2} t^{-2a}\left( 1+\frac{m}{2t}\right)^{-4a} \eta(t) \alpha(t)
				-  \left( C_{2}\mathfrak{c}_{p}\frac{m}{a}+C_{1} \frac{I_{a}(\frac{m}{2t})}{I_{a}(k)} \right),
			\end{aligned}
		\end{equation}
		where $C_1, C_2\in \mathbb{R}$ and $\eta$ is given by\begin{eqnarray}
			\eta(t)&:=& \mathfrak{c}_{p}^{-1}t^{a}\left(1 + \frac{m}{2 t}\right)^{2a-1}\left(1- \frac{m}{2t}\right). \label{equ:eta}
		\end{eqnarray}
		
		We remark that the three one-variable functions $\alpha, \beta, \gamma$ satisfy the following system of ordinary differential equations:  (See Proposition \ref{ode-solution})
		\begin{equation}\label{equ: differential equations}
			\begin{cases}
				\alpha'(t) - (2 a + 1) \eta(t) f'(t) \alpha(t) - a f'(t) \beta(t) = 0, \\
				\beta'(t) + (2 a + 1) (\eta(t))^2 f'(t) \alpha(t) = 0, \\
				\gamma'(t) = - f'(t) \alpha(t),
			\end{cases}
		\end{equation}
		where
		\begin{eqnarray}
			f (t)&:=& 1 - \int_{t}^{\infty} \mathfrak{c}_{p} s^{-a-1}\left(1+\frac{m}{2s}\right)^{-2a}ds
			=1-\frac{I_{a}(\frac{m}{2t})}{I_{a}(k)}.\label{equ:f}
		\end{eqnarray}
		
		We have the following monotone quantity along regular level sets of $p$-harmonic functions.
		\begin{Theorem}\label{thm:Monot}
			Let $(M,g)$ and $u$ be as in Theorem \ref{Thm:1.02}.
			For any $k\in(-1,0)\cup(0,1]$, let $\a, \b, \g$ be three one-variable functions given by \eqref{ode-solution1} with
			\begin{eqnarray}\label{alpha>0-assumpt}
				C_{2} \ge 0, \hbox{ and }
				\left( C_{2}\mathfrak{c}_{p}+C_{1}\frac{a}{m} \right)
				\eta(r_{0}) \ge C_{1}\frac{1}{m}. 	
			\end{eqnarray}
			Let $F: [r_{0}, \infty)\to \mathbb{R}$ be given by
			\begin{equation} \label{equ: F}
				F(t):= 4 \pi \gamma(t) + \alpha(t) \int_{\Sigma_t} H |\nabla u|   + \beta(t) \int_{\Sigma_t} |\nabla u|^2  .
			\end{equation}
			where $\S_t$ is a regular level set of $u$ given by $$\Sigma_t=\{x\in M| u(x)=f(t)\}.$$
			%Denote $m = 2 \left(\mathfrak{c}_{p}I_{a}(1)\right)^{\frac{1}{a}}$, $r_{0} = \left(\mathfrak{c}_{p}I_{a}(1)\right)^{\frac{1}{a}}$.
			%For $i=1, 2$, consider the function $F: \left[ r_{0},\infty \right)\to \mathbb{R}$ defined by
			%\begin{equation} \label{equ: F}
			%	F(t):= 4 \pi \gamma(t) + \alpha(t) \int_{\Sigma_t} H |\nabla u|   + \beta(t) \int_{\Sigma_t} |\nabla u|^2
			%\end{equation}
			%where $\S_t$ is the level set of $u$ given by $$\Sigma_t=\{x\in M| u(x)=f(t)\}.$$
			%%%%%%%%%%%%
			Then $F(t)$ is monotone non-increasing on $$\mathcal{T}:=\left\{ t\in\left[r_{0},\infty \right)\Big| f(t) \hbox{ is a regular value of }u \right\},$$ that is, for $t_{1}, t_{2}\in \mathcal{T}$, $t_1<t_2$, we have $F(t_{1}) \ge F(t_{2})$. Moreover, $F$ is a constant on $\mathcal{T}$ if and only if $(M, g)$ is isometric to the spatial Schwarzschild manifold of mass $m$ outside a rotationally symmetric ball, $(\mathcal{M}_{m, r_0}^{3},  g_{m})$.
		\end{Theorem}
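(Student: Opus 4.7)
The plan is to compute $F'(t)$ directly along the family of regular level sets $\Sigma_t = \{u = f(t)\}$, reorganize the resulting surface integrals using the $p$-harmonic equation and Gauss--Bonnet on the topological spheres $\Sigma_t$, and then invoke the ODE system \eqref{equ: differential equations} to cancel the undesirable terms, leaving a manifestly non-positive remainder controlled by the nonnegative scalar curvature and a refined Kato inequality for $p$-harmonic functions. The topological hypotheses $H_2(M,\Sigma)=0$ and $\Sigma$ connected ensure, as in \cite{HMT}, that every regular level set $\Sigma_t$ with $t \in \mathcal{T}$ is diffeomorphic to $S^2$, so Gauss--Bonnet provides $\int_{\Sigma_t} K_{\Sigma_t}\,dA = 4\pi$, which is the topological input that will match the $4\pi\gamma(t)$ term in $F$.

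I would then differentiate $F(t)$ by means of the first-variation identity
\[
\frac{d}{dt}\int_{\Sigma_t}\phi\,dA \;=\; f'(t)\int_{\Sigma_t}\frac{1}{|\nabla u|}\bigl(\partial_\nu \phi + H\phi\bigr)\,dA, \qquad \nu = \frac{\nabla u}{|\nabla u|},
\]
applied to $\phi = H|\nabla u|$ and $\phi = |\nabla u|^2$. The normal derivatives are processed using two pointwise consequences of $\Delta_p u = 0$: on every regular level set one has $u_{\nu\nu} = -H|\nabla u|/(p-1)$, so $\partial_\nu |\nabla u|^2 = -\tfrac{2}{p-1}H|\nabla u|^2$; and a $p$-Bochner identity for $|\nabla u|^q$ at a suitable exponent $q$, which expresses $\partial_\nu H$ in terms of $R$, $\mathrm{Ric}(\nu,\nu)$, $|A|^2$, and $|\nabla^T|\nabla u||^2$, where $A$ denotes the second fundamental form of $\Sigma_t$. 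Substituting the Gauss equation $R_{\Sigma_t} = R - 2\,\mathrm{Ric}(\nu,\nu) + H^2 - |A|^2$ together with Gauss--Bonnet, $F'(t)$ splits into a constant contribution from the Euler characteristic, an integrand of the schematic form $R + |\mathring{A}|^2 + (\textrm{Kato deficit})$, and two remainder integrals $\int_{\Sigma_t} H|\nabla u|$ and $\int_{\Sigma_t}|\nabla u|^2$ whose coefficients are explicit rational functions of $\alpha,\beta,\gamma,\eta,f'$.

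The role of the system \eqref{equ: differential equations} is precisely to force these two remainder coefficients to vanish identically; this is the content of Proposition~\ref{ode-solution} and reduces to an algebraic check once $F'(t)$ has been written out. Combined with the sign conditions \eqref{alpha>0-assumpt}, which imply $\alpha(t)\ge 0$ on $[r_0,\infty)$, the nonnegativity of the coefficients multiplying $R$ and $|\mathring A|^2$ then delivers $F'(t)\le 0$ on $\mathcal{T}$. For the rigidity statement, saturation along $\mathcal{T}$ forces $R \equiv 0$, $\mathring{A}\equiv 0$ and equality in the refined Kato inequality on every regular level set; umbilicality plus $|\nabla u|$ constant on each $\Sigma_t$, combined with the explicit ODE governing $|\nabla u|$ as a function of $t$ in the Schwarzschild model, then identifies $(M,g)$ with $(\mathcal{M}_{m,r_0}^3,g_m)$.

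The main obstacle I anticipate is the bookkeeping in this derivative: several algebraic cancellations among the terms generated by $\partial_\nu(H|\nabla u|)$ and $\partial_\nu|\nabla u|^2$ must match the system \eqref{equ: differential equations} line-by-line, and the Bochner/refined Kato estimate has to be recorded in its sharpest $p$-harmonic form so that the $R$- and $|\mathring A|^2$-coefficients come out with the correct sign simultaneously. A secondary point is verifying $\alpha(t)\ge 0$ on $[r_0,\infty)$ from \eqref{alpha>0-assumpt}, which I would check by monitoring the monotonicity in $t$ of $\eta(t)$ and of the bracketed expression in the first line of \eqref{ode-solution1}.
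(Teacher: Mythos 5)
Your outline captures the mechanism of the paper's proof in the case where $u$ has no critical points: differentiate $F$ along the level-set flow, use the first-variation identities (Lemmas \ref{lem:2.3}, \ref{lem:2.4}), insert the Gauss equation, and invoke the ODE system \eqref{equ: differential equations} precisely to kill the coefficients of $\int_{\Sigma_t}H|\nabla u|$ and $\int_{\Sigma_t}|\nabla u|^2$, leaving an integrand of definite sign controlled by $R_M\ge0$, $|\mathring h|^2\ge 0$, the squared deficit $(H/2-\eta|\nabla u|)^2$, and the Gauss--Bonnet contribution. The rigidity reading-off from equality in each of these is also the paper's Section 4 strategy. Two corrections are in order, one minor and one essential.

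The minor point: under $H_2(M,\Sigma)=0$ and $\Sigma$ connected, the paper concludes only that each regular $\Sigma_t$ is \emph{connected}, hence $\int_{\Sigma_t}K_{\Sigma_t}\le 4\pi$, not the equality $\int_{\Sigma_t}K_{\Sigma_t}=4\pi$. The inequality (together with the sign of $\gamma'(t)=-f'(t)\alpha(t)\le 0$) is all one needs. Asserting $\Sigma_t\cong S^2$ a priori is not justified and not required.

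The essential gap: your argument establishes $F'(t)\le 0$ only where the level-set flow is defined, i.e.\ on intervals of regular values with $|\nabla u|\ne 0$. For $p\ne 2$ the solution $u$ of \eqref{p-laplace} is in general only $C^{1,\beta}$ near its critical set; the normal direction, the mean curvature, and the first-variation identity you invoke all degenerate there. The theorem, however, claims $F(t_1)\ge F(t_2)$ for \emph{any} two regular values $t_1<t_2$, and there may be a nonempty set of critical values in between. To bridge critical values the paper devotes all of Section 3.2 to a regularization: it approximates $u$ by smooth solutions $v_\ve$ of $\operatorname{div}(|\nabla v_\ve|_\ve^{p-2}\nabla v_\ve)=0$, rewrites the regularized functional $F_\ve(t)$ as the flux of a vector field $X_{\ve,\delta}$ across $\Sigma_{t,\ve}$, estimates $\operatorname{div}X_{\ve,\delta}$ pointwise (Lemma \ref{lem:3.3}) using a sharpened Kato-type inequality adapted to the full range $1<p<3$, applies the divergence theorem on $\{f_\ve(t_1)<v_\ve<f_\ve(t_2)\}$, and only then passes $\delta\to 0$ and $\ve\to 0$ (Proposition \ref{lem:3.4} and Lemma \ref{lem:3.2}). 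This produces the extra $O(\ve)$ error term on the right of \eqref{asymp-monot} that must be seen to vanish. Without some version of this regularization step (or another device to cross critical values, e.g.\ weak monotonicity of level-set integrals à la Huisken--Ilmanen), your proof only yields monotonicity on connected components of the regular set and does not establish the stated global comparison $F(t_1)\ge F(t_2)$.

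Finally, for the rigidity direction you sketch the right ingredients, but one should add the nontrivial step that saturation also forces $|\nabla u|$ to equal the explicit Schwarzschild profile $\mathfrak{c}_p t^{-a-1}(1+\tfrac{m}{2t})^{-2a-2}$ (using the asymptotics \eqref{equ:EstimateNablau} to fix the integration constant), after which the metric is reconstructed in the coordinates $(t,\vartheta)$ and the leaf metric is pinned down as round by computing $K_{\Sigma_t}$; this is where the Gauss equation is used a second time.
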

		
		\begin{Remark}\label{rmk1.3}\
			
			\begin{itemize}
				\item[(i)] The condition \eqref{alpha>0-assumpt} is used to make sure that $\alpha(t)\ge0$ on $[r_{0},+\infty)$, which is the key to  the monotonicity of $F(t)$. See the details in Proposition \ref{alpha>0} in the Appendix.  
				\item[(ii)] For different choice of $C_1$ and $C_2$, we may get \eqref{geom-ineq-10} and \eqref{geom-ineq-20} respectively, see Section 6 for detailed proof.
				\item[(iii)] Our proof also works for $k=0$. In fact, it becomes much easier in this case. Precisely, we have $m=0$, $\eta(t)=\mathfrak{c}_{p}^{-1}t^{a}$ and $f'(t)=\mathfrak{c}_{p}t^{-a-1}$. One may solve \eqref{equ: differential equations} to get
				\begin{equation}\label{ode-solution2}
					\begin{aligned}
						\alpha(t)
						=&C_2t^{a+1}-C_1\frac{1}{a+1},\\
						\beta(t)
						=&C_1\frac{2a+1}{a(a+1)}\mathfrak{c}_{p}^{-1}t^a-C_2\mathfrak{c}_{p}^{-1}t^{2a+1},\\
						\gamma(t)
						=&- C_2\mathfrak{c}_{p}t-C_1\mathfrak{c}_{p}\frac{1}{a(a+1)}t^{-a}.
					\end{aligned}
				\end{equation}
				When we choose $C_2=\mathfrak{c}_{p}^{-1}$ and $C_1=0$, 	we get
				$$F_1(t)=-4\pi t+t^{a+1} \mathfrak{c}_{p}^{-1}\int_{\Sigma_t} H |\nabla u|-\mathfrak{c}_{p}^{-2}t^{2a+1}\int_{\Sigma_t} |\nabla u|^2.$$
				This is the monotone quantity found by Agostiniani-Mantegazza-Mazzieri-Oronzio \cite[(1.10)]{AMMO}.	
				When we choose $C_2=0$ and $C_1=-a(a+1)\mathfrak{c}_{p}^{-1}$, we get
				$$F_2(t)=4\pi t^{-a}+a  \mathfrak{c}_{p}^{-1}\int_{\Sigma_t} H |\nabla u|-(2a+1)\mathfrak{c}_{p}^{-2}t^a \int_{\Sigma_t} |\nabla u|^2.$$
				This is the monotone quantity found by Hirsch-Miao-Tam \cite[(1.19)]{HMT}
				Both monotone quantities are constants if and only if $(M, g)$ is isometric to $\mathbb{R}^3$ outside a ball.
			\end{itemize}
		\end{Remark}

		Next we make some comments on our approach to find the monotone quantities. As we have mentioned before, in a series of previous {significant} works \cite{AMO, AMMO, M1, O, HMT}, various monotone quantities associated with $p$-capacitary functions have been found in $3$-mainfolds with nonnegative scalar curvature. These works share a common feature that when a first explicit monotone quantity has been found, some other monotone quantities might be found by making use of the first explicit quantity as well as its asymptotic behavior. Our paper gives a new perspective. We consider general quantities with undetermined coefficients {associated with  Schwarzschild manifold model}. The monotonicity of the general quantities can be verified if the undetermined coefficients satisfy a system of ODEs. {We solve out the system of ODEs so that we get all the possible monotone quantities and subsequently employ them to deduce new results which generalized Miao \cite{M1} and Oronzio's \cite{O} results  in the case $p=2$.}

		%The proof of the above inequalities when $-1<k<0$ is similar to the case when $0<k<1$, so we just leave some illustrations here and below.   
		%When $-1<k<0$, notice that $m<0$, $1-a\eta(r_{0})<0$, it's different with the case $0<k<1$. By taking suitable $k$ in Theorem \ref{Thm:1.02}, we can still get that $m_{ADM}>m$. 

		The rest of our paper is organized as follows. In Section 2, we study the $p$-capacitary function in a spatial Schwarzschild manifold outside a rotationally symmetric sphere. In Sections 3 and 4, we prove the monotonicity and rigidity part in Theorem \ref{thm:Monot} respectively. In Section 5, we study the asymptotic behavior for the monotone quantities and in Section 6, we given some applications. In the Appendix, we solve out the solutions to a system of ODEs that appears in the proof of monotonicity. 
		
		\
		
		\noindent{\bf Acknowledgements.} We are grateful to Prof. Pengzi Miao for his interest  in this work and his useful comments. When our paper is being finalized, we found a recent arXiv preprint ``The Sharp $p$-Penrose Inequality" by Liam Mazurowski and Xuan Yao (arXiv:2305.19784), which proves  the mass-to-$p$-capacity inequality for manifolds with minimal boundary via related techniques.
		
		\
		
		\section{$p$-capacity in Schwarzschild manifolds}
		
		Given $m\in\mathbb{R}$ and $r_0 > \frac{|m|}{2}$. Let  $(\mathcal{M}_{m, r_0}^{3},   g_{m})$ be the spatial Schwarzschild manifold  of mass $m$ outside a rotationally symmetric sphere of mass $m$, given by \eqref{schwarzschild}. Here $m$ can be taken to be a negative real number, we still call it the Schwarzschild manifold (See Bray-Jauregui \cite{BJ} and Miao \cite{M2}).  Now we calculate the $p$-capacitary function in $(\mathcal{M}_{m, r_0}^{3},   g_{m})$.
		\begin{Proposition} \label{Appendix: basic_solution}
			Let   $(\mathcal{M}_{m, r_0}^{3},   g_{m})$ be the spatial Schwarzschild manifold of mass $m$  outside a rotationally symmetric sphere given by \eqref{schwarzschild} and $u$ be the solution to \eqref{p-laplace}. Then  $u(x)= f_{m, r_0}(r), r=|x|$, where $f_{m, r_0}$ is given by
			\begin{eqnarray*}
				f_{m, r_0}(r)&:=& 1 - \int_{r}^{\infty} \mathfrak{c}_{p} s^{-a-1}(1+\frac{m}{2s})^{-2a}ds
				=1-\frac{I_{a}(\frac{m}{2r})}{I_{a}(\frac{m}{2r_{0}})}, 	\end{eqnarray*}
			where $\mathfrak{c}_{p}$ and $m$ can be related by 
			\begin{equation} \label{equ:m}
				|m| = 2 \left( \mathfrak{c}_{p}I_{a}(\frac{m}{2r_{0}}) \right)^{\frac{1}{a}} .
			\end{equation}
		\end{Proposition}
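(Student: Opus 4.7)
The strategy is to exploit the rotational symmetry of $(\mathcal{M}_{m,r_0}^3, g_m)$: since both the metric and the boundary data are invariant under $SO(3)$, the unique weak solution of \eqref{p-laplace} must be radial, so we look for $u(x) = f(r)$ with $r = |x|$ and reduce the PDE to an ODE.

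The first concrete step is to write the $p$-Laplacian for a radial function on the conformally flat metric $g_m = \phi^4 \bar g$, where $\phi(r) = 1 + \frac{m}{2r}$. Using $g^{rr} = \phi^{-4}$, $\sqrt{\det g_m} = \phi^6 r^2 \sin\theta$, and $|\nabla u|_{g_m} = \phi^{-2}|f'(r)|$, the equation $\Delta_p u = 0$ becomes
\begin{equation*}
\frac{1}{\phi^6 r^2}\,\partial_r\bigl(\phi^{6-2p}\, r^2\, |f'|^{p-2} f'\bigr) = 0.
\end{equation*}
Assuming $f' > 0$ (which will be justified by the boundary behavior), this integrates to $\phi^{6-2p} r^2 (f')^{p-1} = C_1^{p-1}$ for some constant, so
\begin{equation*}
f'(r) = C_1\, r^{-(a+1)}\bigl(1 + \tfrac{m}{2r}\bigr)^{-2a},
\end{equation*}
after using the identities $(6-2p)/(p-1) = 2a$ and $2/(p-1) = a+1$ with $a = (3-p)/(p-1)$.

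Next, integrating from $r$ to $\infty$ and imposing $u \to 1$ at infinity yields $f(r) = 1 - C_1 \int_r^\infty s^{-a-1}(1+\tfrac{m}{2s})^{-2a}\, ds$. To match with the definition of $I_a$ and identify $C_1$ as $\mathfrak{c}_p$, I would carry out the substitution $t = \frac{m}{2s}$ (handling the sign of $m$ with $\operatorname{sgn}$), under which
\begin{equation*}
\int_{r_0}^{\infty} s^{-a-1}\bigl(1+\tfrac{m}{2s}\bigr)^{-2a}\, ds \;=\; \Bigl(\tfrac{2}{|m|}\Bigr)^{a} I_a\!\bigl(\tfrac{m}{2r_0}\bigr).
\end{equation*}
The boundary condition $f(r_0) = 0$ then forces $C_1 \bigl(\tfrac{2}{|m|}\bigr)^{a} I_a(\tfrac{m}{2r_0}) = 1$. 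Independently, computing the capacity via $\operatorname{Cap}_p(\partial M) = \int_{\{u=t\}} |\nabla u|^{p-1}$ on the coordinate sphere $\{r = \text{const}\}$ gives, after the simplifications $(a+1)(p-1)=2$ and $(2a+2)(p-1)=4$, the clean identity $\operatorname{Cap}_p = 4\pi C_1^{p-1}$, hence $C_1 = \mathfrak{c}_p$. Combining these two expressions for $C_1$ yields $|m|/2 = (\mathfrak{c}_p\, I_a(\tfrac{m}{2r_0}))^{1/a}$, which is \eqref{equ:m}, and rewriting the integral in terms of $I_a$ produces the stated closed form $f_{m,r_0}(r) = 1 - I_a(\tfrac{m}{2r})/I_a(\tfrac{m}{2r_0})$.

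The only real subtleties I anticipate are bookkeeping ones: handling the sign of $m$ in the change of variables (so that $I_a$ with its $\operatorname{sgn}(k)$-dependent integrand comes out correctly for $k \in (-1,0)$), and confirming that the radial solution is indeed the weak solution of \eqref{p-laplace}, which follows from the standard uniqueness for the $p$-capacitary Dirichlet problem on an asymptotically flat end.
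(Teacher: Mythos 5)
Your proof is correct and follows essentially the same route as the paper: reduce to a radial ODE by symmetry and uniqueness, solve for $f'$, identify the integration constant as $\mathfrak{c}_p$ via the capacity integral on a coordinate sphere, then integrate and impose the boundary conditions. The one small (and pleasant) difference is that you write $\Delta_p u = \frac{1}{\phi^6 r^2}\partial_r\bigl(\phi^{6-2p}r^2|f'|^{p-2}f'\bigr)$ directly in divergence form, so the first integral is immediate, whereas the paper expands via the conformal transformation formula for $\Delta_p$ into a second-order non-divergence ODE and then solves it; both reach the same $f'(r)=\mathfrak{c}_p\, r^{-a-1}(1+\frac{m}{2r})^{-2a}$.
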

		%%%%%%
		\begin{proof}
			Since $(\mathcal{M}_{m, r_0}^{3},   g_{m})$ is rotationally symmetric and the solution to  \eqref{p-laplace} is unique, $u$ is rotationally symmetric, i.e. there exists some $f(r)$ such that $u(x)=f(r)$.
			%Let $\bar \nabla$ and $\nabla$ denote  the covariant derivative with respect to $\delta$ and $g_{\mathcal{S}}$ respectively. For $u = f(r)$, %$\bar{\nabla} u = f'(r) \bar{\nabla} r$.
			%Since the quantities is calculated in $(\rr^{3}, \delta)$,we have $\bar{\nabla} r = \delta^{i j} \partial_{i} r \ \partial_{j} = \frac{x^{i}}{r} \partial_{i}$.
			%$|\bar{\nabla} r|^{2}_{\bar{g}} = \delta(\bar{\nabla} r, \bar{\nabla} r) = 1$.
			%$\overline{\operatorname{Hess}} r (\partial_{i}, \partial_{j}) = \partial_{i} \partial_{j} r = \frac{1}{r} (\delta_{ij} - \frac{x^{i}}{r}\frac{x^{j}}{r})$.  $\bar{\Delta} r = \frac{n - 1}{r}$.
			%And $|\bar{\nabla} u|_{\bar{g}} = f'(r)$.
			%$ \bar{\nabla} |\bar{\nabla} u|_{\bar{g}} = f''(r) \bar{\nabla} r $.
			%$\bar{\Delta} u = f''(r) + 2 \frac{1}{r}f'(r)$.
			The Euclidean $p$-Lapacian on $u$ gives
			$$
			\bar{\Delta}_{p} u
			%= (p - 2) |\bar{\nabla} u|^{p - 3}_{\bar{g}} \bar{\nabla}u(|\bar{\nabla} u|_{\bar{g}}) + |\bar{\nabla} u|^{p - 2}_{\bar{g}} \bar{\Delta} u
			= \left( f'(r) \right)^{p - 2} \left[ (p - 1) f''(r) + 2 \frac{1}{r}f'(r) \right].
			$$
			Let $w = 1 + \frac{m}{2 r}$ so that $ g_{m}=w^4\bar g$. By using the transformation formula for $p$-Laplacian under conformal change,
			%$$
			%\Delta_{p} u
			%= w^{- 2p} \left[ \bar{\Delta}_{p} u + 2(3 - p) |\bar{\nabla} u|^{p - 2}_{\bar{g}} \bar{\nabla}u(ln w) \right],
			%$$
			we get that
			\begin{eqnarray*}
				\Delta_{p} u&=& w^{- 2p} \left( \bar{\Delta}_{p} u + 2(3 - p) |\bar{\nabla} u|^{p - 2}_{\bar{g}} \<\bar{\nabla}u, \bar\nabla\ln w\> \right)
				\\&=& w^{- 2p} (f'(r))^{p - 2} \left[ (p - 1) f''(r) + 2 \frac{1}{r}f'(r) + 2(3 - p) f'(r) \partial_{r}(\operatorname{ln} w) \right].
			\end{eqnarray*}
			It follows from $\Delta_{p} u = 0$ that
			$$f'(r) = C r^{-a-1}\left(1+\frac{m}{2r}\right)^{-2a},$$ where $C$ is a positive constant. We claim that $C=\mathfrak{c}_{p}$. Indeed,
			since $$|\nabla u|_{g} =  w^{-2} |\bar{\nabla} u|_{\bar{g}} = w^{-2} f'(r),$$ we see
			$$
			\operatorname{Area}_{g} (\partial \mathcal{M}_{m, r_0})
			= \int_{\{ r = r_{0} \}}  d \sigma
			= \int_{\{ r = r_{0} \}} w^{4} d \bar{\sigma}
			= (w(r_{0}))^{4} 4 \pi r_{0}^{2}.
			$$
			%Here $d \bar{\sigma}$ is the Euclidean volume form.
			It follows that
			\begin{eqnarray*}
				\mathfrak{c}_{p}= \left( \frac{1}{4 \pi} \int_{\partial \mathcal{M}_{m, r_0}} |\nabla u|_{g}^{p-1} d \sigma \right)^{\frac{1}{p-1}}
				%= \left( \frac{1}{4 \pi} \int_{\{ r = r_{0} \}} \left( w^{-2} f'(r) \right)^{p-1} w^{4} d \bar{\sigma} \right)^{\frac{1}{p-1}}
				= (w(r_{0}))^{ 2\frac{3-p}{p-1}} r_{0}^{\frac{2}{p-1}} f'(r_{0})
				= C.
			\end{eqnarray*}

			Since $u \rightarrow 1$ as $r \rightarrow \infty$, we get
			\begin{equation*}
				f(r) = 1 - \int_{r}^{+ \infty} \mathfrak{c}_{p} t^{-a-1}\left(1+ \frac{m}{2t}\right)^{-2a} d t.
			\end{equation*}
			By a change of variable $s=\frac{|m|}{2t}$, we see that 
			\begin{equation*}
				f(r) = 1 - (\frac{|m|}{2})^{-a} \mathfrak{c}_{p}\int_{0}^{\frac{|m|}{2r}}  s^{a-1}(1+\operatorname{sgn}(m)s)^{-2a} d s=1 - (\frac{|m|}{2})^{-a} \mathfrak{c}_{p}I_{a}(\frac{m}{2r}).
			\end{equation*}
			Since $u = 0$ on boundary $\partial \mathcal{M}_{m, r_0} = \{r=r_{0}\}$, we get
			$$
			(\frac{|m|}{2})^{-a} \mathfrak{c}_{p}I_{a}(\frac{m}{2r_0})=1.
			$$
			%	If we take $r=\frac{m}{2s}$, then
			%	$$
			%	1 = (\frac{m}{2})^{-a} \mathfrak{c}_{p} \int_{0}^{\frac{m}{2r_{0}}} s^{a-1}(1+s)^{-2a} d s.
			%	$$
			%	By \eqref{equ:Iak}, we have
			%	$$
			%	(\frac{m}{2})^a=\mathfrak{c}_{p}I_{a}(\frac{m}{2r_{0}}).
			%	$$
			%	Above all, the solution of \eqref{p-laplace} in the spatial Schwarzschild manifold is
			%	$$
			%	u(x) = 1 - \int_{|x|}^{+ \infty} \mathfrak{c}_{p} r^{-a-1}(1+\frac{m}{2r})^{-2a} d r
			%	=1-\frac{I_{a}(\frac{m}{2|x|})}{I_{a}(\frac{m}{2r_{0}})},
			%	$$
			%	where $m$ satisfies
			%	\begin{equation} \label{equ:m}
				%		m = 2 \left( \mathfrak{c}_{p}I_{a}(\frac{m}{2r_{0}}) \right)^{\frac{1}{a}} .
				%	\end{equation}
			
		\end{proof}
		
		\begin{Proposition}
			Let   $(\mathcal{M}_{m, r_0}^{3},   g_{m})$ be a spatial Schwarzschild manifold outside a rotationally symmetric sphere of mass $m$ given by \eqref{schwarzschild} and $u$ be the solution to \eqref{p-laplace}. Let $H$ be the mean curvature of the level set $S_r=\{|x|=r\}$. Then
			\begin{eqnarray*}
				H=\frac{2}{r}\left(1 + \frac{m}{2 r}\right)^{-3}\left(1- \frac{m}{2r}\right), 	\end{eqnarray*}
			\begin{equation}
				\frac{H}{2|\nabla u|_{g}}=\eta(r)= \mathfrak{c}_{p}^{-1}r^{a}\left(1 + \frac{m}{2 r}\right)^{2a-1}\left((1- \frac{m}{2r}\right).
			\end{equation}
		\end{Proposition}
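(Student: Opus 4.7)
The plan is to exploit the conformal relation $g_m = w^4 \bar g$ with $w = 1 + \frac{m}{2r}$ and the fact that both the coordinate sphere $S_r$ and the $p$-capacitary function $u$ are rotationally symmetric. Under a conformal change $\tilde g = e^{2\phi} g$ in dimension $n=3$, the mean curvature of a two-dimensional hypersurface transforms as $H_{\tilde g} = e^{-\phi}\big(H_{g} + 2\,\partial_\nu \phi\big)$, where $\nu$ is the outward unit normal with respect to $g$. Setting $e^{2\phi} = w^4$, i.e.\ $\phi = 2\ln w$, and taking $g = \bar g$, I reduce the problem to the Euclidean mean curvature $\bar H = 2/r$ of $S_r$ together with a radial derivative of $\ln w$.

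The first step is to compute
\begin{equation*}
H = w^{-2}\!\left(\tfrac{2}{r} + 2\,\partial_{\bar\nu}(2\ln w)\right),
\end{equation*}
where $\bar\nu = \partial_r$. Since $w' = -m/(2r^2)$, a direct substitution gives $H = \tfrac{2}{r}\,w^{-2}\big(1 - \tfrac{m}{rw}\big)$. Using $rw = r + \tfrac{m}{2}$, so $rw - m = r - \tfrac{m}{2}$, this simplifies to
\begin{equation*}
H = \tfrac{2}{r}\,w^{-3}\left(1 - \tfrac{m}{2r}\right) = \tfrac{2}{r}\left(1 + \tfrac{m}{2r}\right)^{-3}\!\left(1 - \tfrac{m}{2r}\right),
\end{equation*}
which is the first identity.

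For the second identity, I invoke Proposition~\ref{Appendix: basic_solution}: since $u = f_{m,r_0}(r)$ is radial, $|\bar\nabla u|_{\bar g} = f'(r) = \mathfrak{c}_p r^{-a-1} w^{-2a}$, and by the standard conformal rule for gradients of functions, $|\nabla u|_g = w^{-2}|\bar\nabla u|_{\bar g} = \mathfrak{c}_p r^{-a-1} w^{-2a-2}$. Forming the ratio with the formula just established yields
\begin{equation*}
\frac{H}{2|\nabla u|_g} = \frac{r^{-1}w^{-3}(1 - \tfrac{m}{2r})}{\mathfrak{c}_p r^{-a-1}w^{-2a-2}} = \mathfrak{c}_p^{-1}\, r^{a}\, w^{2a-1}\!\left(1 - \tfrac{m}{2r}\right) = \eta(r),
\end{equation*}
which is the stated formula.

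There is no real obstacle here; the only points requiring minor care are the sign in $w' = -m/(2r^2)$ (which matters for the appearance of the factor $1 - \frac{m}{2r}$) and the consistency of the conformal transformation laws for $H$ and for $|\nabla u|$. Both are routine once the conformal factor $w^4$ is fixed and the Euclidean calculation $\bar H = 2/r$, $|\bar\nabla u|_{\bar g}=f'(r)$ is in hand.
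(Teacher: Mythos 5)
Your proof is correct and follows essentially the same route as the paper: both apply the conformal transformation law for mean curvature to reduce to $\bar H = 2/r$ on the Euclidean sphere (your form $H_{\tilde g}=e^{-\phi}(H_g+2\partial_\nu\phi)$ with $e^{2\phi}=w^4$ is algebraically identical to the paper's $H=w^{-3}(\bar H w + 4\partial_{\bar\nu}w)$), and then divide by $|\nabla u|_g = w^{-2}f'(r)$. The algebra checks out, including the key simplification $1-\tfrac{m}{rw}=w^{-1}(1-\tfrac{m}{2r})$.
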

		\begin{proof}
			Denote $\bar H$ and $\bar \nu$ be the mean curvature and the unit normal of $S_r=\{|x|=r\}$ under the Euclidean metric $\bar g$, respectively.
			By the transformation formula for mean curvature under conformal change, we have	$$
			H = w^{ - 3 }\left( \bar{H} w + 4 \frac{\partial w}{\partial \bar{\nu}} \right) = w^{ - 3 } \left(  \frac{2}{r} w + 4 \frac{\partial w}{\partial r} \right)  = \frac{2}{r}\left(1 + \frac{m}{2 r}\right)^{-3}\left(1- \frac{m}{2r}\right).
			$$
			Note that $|\nabla u|_{g} = w^{-2} f'(r),$ the second assertion follows.
			%Then the ball centered at origin with radius $r \ge r_{0}$ has constant mean curvature $H(r) = \frac{2}{r}(1 + \frac{m}{2 r})^{-3}(1- \frac{m}{2r})$ in $(\mathcal{M}_{\mathcal{S}}^{3}, g_{\mathcal{S}})$.
		\end{proof}
		
		%It is clear that $\partial M = \{ |x| = \frac{m}{2} \}$ is a minimal surface in $(\mathcal{M}_{\mathcal{S}}^{3}, g_{\mathcal{S}})$.
		%	$(\mathcal{M}_{\mathcal{S}}^{3}, g_{\mathcal{S}})$ is an asymptotically flat manifold with horizon boundary. Note its ADM mass by $\mathfrak{m}_{ADM}$, then it's easy to verify that $\mathfrak{m}_{ADM} = m$.

		\section{Monotonicity}
		
		The aim of this section is to prove the monotonicity of $F(t)$ in Theorem \ref{thm:Monot}.	
		\subsection{Monotonicity of $F(t)$ when $|\nabla u|\neq 0$}
		We assume that $|\nabla u|\neq 0$ in $M$, then $u\in C^\infty(M)$.
		Consider the level set $\S_t=\{u=f(t)\}$, where $f$ is a given one-variable function. One sees readily that $\{\S_t\}$ satisfies the flow equation
		\begin{equation} \label{meancurv-evol}
			\begin{cases}
				\Psi: \Sigma \times (r_0, + \infty) \rightarrow \ M, \\
				\partial_t \Psi(p, t)=f'(t)\frac{\nabla u}{|\nabla u|^2}=f'(t)\frac{1}{|\nabla u|}\nu.
			\end{cases}
		\end{equation}
		The following basic facts are well-known, see for example \cite{HMT}.
		\begin{Lemma}
			The mean curvature of a regular level set $\S_t$ is given by
			\begin{eqnarray}
				&&H= (1-p)\frac{1}{ |\nabla u|}u_{\nu\nu}=\frac{1-p}{2}\frac{1}{|\nabla u|^{2}}\nu(|\nabla u|^2), \label{equ: H}
				%&&\operatorname{div}_{\Sigma_t} \left( \frac{\nabla u}{|\nabla u|^2} \right)\label{equ: div_nu}
				%= H |\nabla u|^{- 1}.
			\end{eqnarray}	
			where $u_{\nu\nu}=\n^2 u \left( \nu, \nu \right)$.
		\end{Lemma}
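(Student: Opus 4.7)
The plan is to derive both identities by a direct computation using the formula for the mean curvature of a level set as the divergence of a unit normal, combined with the expansion of the $p$-Laplace equation. Since $|\nabla u|\neq 0$ on the regular level set $\Sigma_t$, the function $u$ is smooth nearby and the outward unit normal is $\nu = \nabla u /|\nabla u|$.

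First I would compute
$$
H=\mathrm{div}(\nu)=\mathrm{div}\!\left(\frac{\nabla u}{|\nabla u|}\right)=\frac{\Delta u}{|\nabla u|}-\frac{\nabla^2 u(\nabla u,\nabla u)}{|\nabla u|^3}=\frac{\Delta u - u_{\nu\nu}}{|\nabla u|},
$$
using the product rule together with $\nabla |\nabla u| = \tfrac{1}{2|\nabla u|}\nabla(|\nabla u|^2)$.

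Next I would expand the $p$-Laplace equation as
$$
0=\Delta_p u = |\nabla u|^{p-2}\Delta u+(p-2)|\nabla u|^{p-4}\nabla^2 u(\nabla u,\nabla u),
$$
and divide by $|\nabla u|^{p-2}$ (which is nonzero) to obtain $\Delta u=(2-p)\,u_{\nu\nu}$. Substituting into the previous line immediately yields the first identity $H=(1-p)\,u_{\nu\nu}/|\nabla u|$.

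Finally, for the second identity I would note the standard identity $\nu(|\nabla u|^2)=2\,\nabla^2 u(\nu,\nabla u)=2|\nabla u|\,u_{\nu\nu}$, whence $u_{\nu\nu}=\tfrac{1}{2|\nabla u|}\nu(|\nabla u|^2)$; plugging this into the first formula produces the claimed expression. There is no real obstacle here: under the assumption $|\nabla u|\neq 0$, the whole argument is an elementary manipulation, and the only point that needs mild care is that the divergence expression for $\mathrm{div}(\nu)$ on $M$ already coincides with the mean curvature of the level hypersurface (taken with respect to the outward normal $\nu$) because the tangential part of $\nabla_\nu \nu$ contributes zero trace to $H$.
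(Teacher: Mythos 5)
Your proof is correct and is the standard direct computation; the paper states this lemma without proof, citing \cite{HMT} for the ``well-known'' fact, so there is nothing in the paper to compare against step by step, but the argument you give is exactly the expected one. One small imprecision in your final remark: the equality $\mathrm{div}_M(\nu)=H$ holds because, in an orthonormal frame $e_1,\dots,e_{n-1},\nu$ adapted to $\Sigma_t$, the ambient divergence is $\sum_i g(\nabla_{e_i}\nu,e_i)+g(\nabla_\nu\nu,\nu)$ and the last term vanishes since $g(\nu,\nu)\equiv 1$; it is the \emph{normal} component of $\nabla_\nu\nu$ that vanishes (indeed $\nabla_\nu\nu$ is entirely tangential), not that ``the tangential part contributes zero trace.'' This does not affect the correctness of the result, but you may want to rephrase that sentence.
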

		%By the standard calculation, we have the first variation formula.
		%%%%%%
		The evolution equation for the mean curvature along the flow \eqref{meancurv-evol} is as following, see for example \cite{M1}.
		\begin{Lemma}\label{lem:2.3}
			%For any smooth function $h(t)$,
			%\begin{equation} \label{equ: first_variation}
			%	\frac{d}{d t} \left(  \int_{\Sigma_t} h(t)    \right)
			%	= \int_{\Sigma_t} h'(t) + h(t) f'(t)  H |\nabla u|^{- 1}  .
			%\end{equation}
			%If we denote the mean curvature of $\Sigma_t$ with respect to the normal vector $\nu = \frac{\nabla u}{|\nabla u|}$ by $H_t$, then
			\begin{equation} \label{equ: dH_t}
				\frac{\p}{\p t} H
				= - f'(t) \left( \Delta_{\Sigma_t} \left( \frac{1}{|\nabla u|} \right)
				+ \left( |h|^2 + \operatorname{Ric}_M (\nu, \nu) \right)\frac{1}{|\nabla u|} \right).
			\end{equation}	
			%where $h$ is the second fundamental form and $\operatorname{Ric}_M$ is the Ricci curvature of $(M, g)$.
		\end{Lemma}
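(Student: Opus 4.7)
The plan is to derive \eqref{equ: dH_t} by specializing a standard evolution formula for mean curvature under a normal geometric flow to the flow \eqref{meancurv-evol}. First, I would recall that for any smooth family $\{\Sigma_t\}$ of hypersurfaces in $(M,g)$ moving by $\partial_t \Psi = \phi\,\nu$ with scalar speed $\phi:\Sigma_t\to\mathbb{R}$, the mean curvature evolves by
$$\partial_t H = -\Delta_{\Sigma_t}\phi - \bigl(|h|^2 + \operatorname{Ric}_M(\nu,\nu)\bigr)\phi.$$
This is a classical identity. One derives it by computing, in a parametrization $\Psi(\cdot,t)$ with fixed spatial parameter, that $\partial_t g_{ij} = 2\phi\,h_{ij}$ and $\partial_t \nu = -\nabla^{\Sigma_t}\phi$, then using the Gauss--Weingarten relations together with the Codazzi equation to obtain $\partial_t h_{ij} = -\nabla^2_{ij}\phi + \phi\bigl(h_i{}^{k} h_{kj} - R_{\nu i\nu j}\bigr)$, and finally tracing against $g^{ij}$ while accounting for $\partial_t g^{ij} = -2\phi\,h^{ij}$. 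See \cite{HI} or \cite{M1} for a detailed derivation in this setting.

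Specializing to the flow \eqref{meancurv-evol}, whose normal speed is $\phi = f'(t)/|\nabla u|$, I would then observe that $f'(t)$ depends only on $t$ and is therefore constant along each leaf $\Sigma_t$; in particular it commutes with the intrinsic Laplacian, so
$$\Delta_{\Sigma_t}\phi = f'(t)\,\Delta_{\Sigma_t}\bigl(|\nabla u|^{-1}\bigr).$$
Factoring $f'(t)$ out of both terms in the general formula recalled above yields precisely \eqref{equ: dH_t}.

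The main delicate point is bookkeeping of signs and orientation rather than any substantive analytic obstacle. The displayed formula depends on the outward-pointing choice $\nu = \nabla u/|\nabla u|$ (which is the direction of the flow, toward infinity) and on the induced sign convention for the scalar second fundamental form $h$ and its trace $H$; these must be the ones employed in \eqref{equ: H}. Once these conventions are fixed as in the paper, the signs in the general evolution formula match those stated, and the lemma follows from the one-line specialization above. I expect the actual presentation in the paper to be correspondingly short, essentially a citation of the general formula together with the factoring of $f'(t)$.
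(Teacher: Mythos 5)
Your proposal is correct and is precisely the standard argument one would invoke here. The paper does not actually reprove this lemma; it simply cites \cite{M1}, where the same general normal-variation formula for mean curvature is used and then specialized to the speed $\phi = f'(t)/|\nabla u|$. Your intermediate identities ($\partial_t g_{ij} = 2\phi h_{ij}$, $\partial_t h_{ij} = -\nabla^2_{ij}\phi + \phi(h_i{}^k h_{kj} - R_{\nu i\nu j})$, followed by tracing against $g^{ij}$ with $\partial_t g^{ij} = -2\phi h^{ij}$) are consistent with the paper's sign conventions, which one can cross-check against \eqref{equ:EstimateH}, where $H = \tfrac{2}{r}(1+o(1)) > 0$ for outward $\nu$, and against the Euclidean-sphere sanity check $\partial_t(2/r) = -2\phi/r^2 = -|h|^2\phi$. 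The observation that $f'(t)$ is constant on each leaf and so commutes with $\Delta_{\Sigma_t}$ is exactly the one-line specialization needed; nothing is missing.
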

		%%%%%%
		
		%%%%%%
		Next we prove the following variational formula.
		\begin{Lemma}\label{lem:2.4} Along the level set flow, we have
			\begin{eqnarray}
				&&\frac{d}{d t}  \int_{\Sigma_t} |\nabla u|^2
				= - a f'(t)  \int_{\Sigma_t} H |\nabla u| ,\label{equ: d B}\\
				&&\frac{d}{d t} \int_{\Sigma_t} H |\nabla u|
				= -f'(t) \left\{  \int_{\Sigma_t} |\nabla u|^{-2}|\nabla_{\Sigma_t} |\nabla u||^2 + \frac{1}{2}( R_{M} - K_{\Sigma_t}+ |\overset{\circ}{h}|^2) + \frac{2 a + 1}{4}H^2 \right\},\label{equ: d A}
			\end{eqnarray}	
			where $\overset{\circ}{h}$ denotes the traceless part of the second fundamental form of the level set.
		\end{Lemma}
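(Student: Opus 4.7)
The plan is to apply the general first-variation formula for surface integrals along the level-set flow \eqref{meancurv-evol}, whose normal velocity is $\varphi:=f'(t)/|\nabla u|$. For any quantity $G$ on the evolving family,
\begin{equation*}
\frac{d}{dt}\int_{\Sigma_t}G\,dA=\int_{\Sigma_t}\left(\frac{dG}{dt}+\varphi H G\right)dA,
\end{equation*}
where the material derivative $\frac{dG}{dt}$ reduces to $\varphi\,\nu(G)$ whenever $G$ is the restriction of a function on $M$, and is furnished by Lemma~\ref{lem:2.3} when $G=H$.

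For \eqref{equ: d B} I take $G=|\nabla u|^2$. The $p$-harmonic equation, combined with the decomposition $\Delta u=\Delta_{\Sigma_t}u+Hu_\nu+u_{\nu\nu}$ (noting $\Delta_{\Sigma_t}u=0$ and $u_\nu=|\nabla u|$ on a level set), yields $u_{\nu\nu}=-\frac{H|\nabla u|}{p-1}$, which is the identity \eqref{equ: H}. Since $\nu(|\nabla u|^2)=2|\nabla u|u_{\nu\nu}$, the $\varphi\nu(G)$ and $\varphi H G$ contributions merge into a single $H|\nabla u|$-integrand whose coefficient simplifies to
\begin{equation*}
\frac{2}{1-p}+1=\frac{3-p}{1-p}=-a,
\end{equation*}
giving \eqref{equ: d B}.

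For \eqref{equ: d A} I take $G=H|\nabla u|$ and expand $\frac{dG}{dt}=\partial_tH\cdot|\nabla u|+H\varphi u_{\nu\nu}$, then invoke Lemma~\ref{lem:2.3}. The tangential Laplacian term that arises is absorbed by a single integration by parts on the closed surface $\Sigma_t$:
\begin{equation*}
\int_{\Sigma_t}|\nabla u|\Delta_{\Sigma_t}\!\left(\frac{1}{|\nabla u|}\right)dA=\int_{\Sigma_t}|\nabla u|^{-2}|\nabla_{\Sigma_t}|\nabla u||^2\,dA.
\end{equation*}
The remaining curvature piece $|h|^2+\mathrm{Ric}_M(\nu,\nu)$ is then rewritten via the twice-contracted Gauss equation in dimension three together with the decomposition $|h|^2=|\overset{\circ}{h}|^2+\frac12 H^2$, producing $\frac12(R_M-K_{\Sigma_t}+|\overset{\circ}{h}|^2)+\frac34 H^2$. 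The main obstacle, in my view, is the final bookkeeping of the $H^2$ coefficient: the $H^2$ contributions coming from $H\varphi u_{\nu\nu}$ and from $\varphi H G$ together inject $f'(t)H^2\left(\frac{1}{1-p}+1\right)$ into the integrand, and after factoring out $-f'(t)$ this combines with the $\frac34 H^2$ above to give
\begin{equation*}
\frac{3}{4}+\frac{2-p}{p-1}=\frac{5-p}{4(p-1)}=\frac{2a+1}{4},
\end{equation*}
matching the coefficient in \eqref{equ: d A}. Each individual ingredient is standard; it is the sign-tracking in this last consolidation that is the most error-prone.
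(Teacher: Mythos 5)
Your proposal is correct and follows the same route as the paper: the first-variation formula for the level-set flow with normal speed $f'(t)/|\nabla u|$, the identity $u_{\nu\nu}=-H|\nabla u|/(p-1)$ from \eqref{equ: H}, Lemma~\ref{lem:2.3} for $\partial_t H$, a single integration by parts for the tangential Laplacian, and the Gauss equation together with $|h|^2=|\overset{\circ}{h}|^2+\tfrac12H^2$. The coefficient arithmetic you flag as the delicate step ($\tfrac{2}{1-p}+1=-a$ and $\tfrac34+\tfrac{2-p}{p-1}=\tfrac{2a+1}{4}$) checks out and matches what the paper obtains.
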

		%%%%%%
		%%%%%%
		\begin{proof}
			Recall that the variation field of the level set flow is $\partial_t \Psi = f'(t) \frac{1}{|\nabla u|} \nu $.
			%According to \eqref{equ: nu nabla u}, we have
			%$$
			%\nu \left( |\nabla u|^2  \right) = 2 |\nabla u| \nu \left( |\nabla u| \right) = 2 |\nabla u| \operatorname{Hess} u (\nu, \nu) = - \frac{2}{p - 1} |\nabla u|^{2} H.
			%$$
			%Then by using \eqref{equ: first_variation}, we have
			Using \eqref{equ: H}, one computes
			\begin{align*}
				\frac{d}{d t}  \int_{\Sigma_t} |\nabla u|^2
				=& \int_{\Sigma_t}  f'(t) \frac{1}{|\nabla u|} \nu (|\nabla u|^2)+ |\nabla u|^2 H f'(t)  \frac{1}{|\nabla u|} \\
				=& \int_{\Sigma_t}  f'(t) \frac{2}{1 - p} |\nabla u| H + f'(t) H |\nabla u|  \\
				=& - af'(t)  \int_{\Sigma_t} H |\nabla u|.
			\end{align*}
			On one hand, by the divergence theorem, we obtain
			\begin{align}\label{equ: div}
				\int_{\Sigma_t} |\nabla u| \Delta_{\Sigma_t} \left( \frac{1}{|\nabla u|} \right)
				%=& \int_{\Sigma_t} \div_{\Sigma_t} \left( |\nabla u| \nabla_{\Sigma_t} \left( \frac{1}{|\nabla u|} \right) \right)\\
				%&- g \left( \nabla_{\Sigma_t} \left( \frac{1}{|\nabla u|} \right), \nabla_{\Sigma_t} \left( {|\nabla u|} \right) \right)   \\
				=& \int_{\Sigma_t} |\nabla u|^{-2} \left| \nabla_{\Sigma_t} {|\nabla u|} \right|^{2}  .
			\end{align}
			On the other hand, the Gauss equation tells that
			\begin{equation}\label{equ:gauss}
				2 \operatorname{Ric}_M (\nu, \nu) = R_{M} - K_{\Sigma_t} + H^{2} - |h|^{2}.
			\end{equation}
			It follows from \eqref{equ: H}, \eqref{equ: dH_t}, \eqref{equ: div} and \eqref{equ:gauss} that
			\begin{align*}
				\frac{d}{d t} \left(  \int_{\Sigma_t} H |\nabla u|    \right)
				%=& \int_{\Sigma_t} \frac{d}{d t} \left( H |\nabla u| \right) + H |\nabla u| f'(t)  H |\nabla u|^{- 1}   \\
				%=& \int_{\Sigma_t} \frac{d}{d t} \left( H \right) |\nabla u| + H X \left(|\nabla u| \right) + H |\nabla u| f'(t)  H |\nabla u|^{- 1}   \\
				=& \int_{\Sigma_t} - f'(t) \left( \Delta_{\Sigma_t} \left( \frac{1}{|\nabla u|} \right) + \left( |h|^2 + \operatorname{Ric}_M (\nu, \nu)  \right)\frac{1}{|\nabla u|} \right) |\nabla u| \\
				&+ \int_{\Sigma_t} H f'(t) \frac{1}{|\nabla u|} \nu (|\nabla u|) + H|\nabla u| Hf'(t)   |\nabla u|^{- 1} \\
				=& -f'(t) \left\{  \int_{\Sigma_t} |\nabla u|^{-2}|\nabla_{\Sigma_t} |\nabla u||^2 + \frac{1}{2}( R_{M} - K_{\Sigma_t}+ |\overset{\circ}{h}|^2) + \frac{2 a + 1}{4}H^2 \right\}.
			\end{align*}
		\end{proof}
		%%%%%%
		
		\begin{Proposition}
			Let $(M, g)$, $u$, $F(t)$ be as in Theorem \ref{thm:Monot}.  Assume in addition that $|\nabla u|\neq 0$ in $M$. Then $F'(t)\le 0$.
		\end{Proposition}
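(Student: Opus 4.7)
The plan is to differentiate $F(t)$ directly using the variational formulas from Lemma \ref{lem:2.4}, then substitute the ODE system \eqref{equ: differential equations} satisfied by $\alpha,\beta,\gamma$ to rewrite $F'(t)$ as a sum of manifestly non-positive terms. First I would compute
\begin{equation*}
F'(t)=4\pi\gamma'(t)+\bigl[\alpha'(t)-a\beta(t)f'(t)\bigr]\int_{\Sigma_t}H|\nabla u|+\beta'(t)\int_{\Sigma_t}|\nabla u|^2+\alpha(t)\frac{d}{dt}\int_{\Sigma_t}H|\nabla u|,
\end{equation*}
where the grouping already absorbs the $\beta(t)\frac{d}{dt}\int|\nabla u|^2$ term via \eqref{equ: d B}. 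Substituting the ODEs $\alpha'-af'\beta=(2a+1)\eta f'\alpha$, $\beta'=-(2a+1)\eta^2 f'\alpha$, and $\gamma'=-f'\alpha$, together with \eqref{equ: d A}, I expect everything to factor as
\begin{equation*}
F'(t)=-f'(t)\alpha(t)\Bigl\{4\pi+\int_{\Sigma_t}\bigl[\tfrac{1}{|\nabla u|^2}|\nabla_{\Sigma_t}|\nabla u||^2+\tfrac{1}{2}(R_M-K_{\Sigma_t}+|\mathring h|^2)+Q\bigr]\Bigr\},
\end{equation*}
where $Q:=\frac{2a+1}{4}H^2-(2a+1)\eta H|\nabla u|+(2a+1)\eta^2|\nabla u|^2=(2a+1)\bigl(\frac{H}{2}-\eta|\nabla u|\bigr)^2$. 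This algebraic completion of the square is the reason $\eta$ appears in the ODE system with coefficient $(2a+1)$; verifying it carefully will be a routine but important sanity check.

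Next I would handle the Gauss curvature term topologically. Since $|\nabla u|\neq 0$ on $M$, the map $u:M\to[0,1)$ has no critical points, so by Morse-theoretic reasoning each regular level set $\Sigma_t$ is diffeomorphic to $\Sigma$, which is connected by hypothesis. Combined with the asymptotic flatness (large coordinate spheres are $2$-spheres) and $H_2(M,\Sigma)=0$, each $\Sigma_t$ is a topological $2$-sphere, so Gauss--Bonnet gives $\int_{\Sigma_t}K_{\Sigma_t}=4\pi$. Then $4\pi-\frac{1}{2}\int K_{\Sigma_t}=2\pi\ge 0$, which is exactly what is needed to absorb the $4\pi$ outside the integral.

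Finally, I would invoke the sign hypotheses: $R_M\ge 0$ by assumption, $|\nabla_{\Sigma_t}|\nabla u||^2\ge 0$, $|\mathring h|^2\ge 0$, $Q\ge 0$ (since $a>0$ for $p\in(1,3)$, so $2a+1>0$), $f'(t)>0$ from the explicit formula \eqref{equ:f}, and $\alpha(t)\ge 0$ by the standing assumption \eqref{alpha>0-assumpt} (as ensured in Proposition \ref{alpha>0} in the Appendix). Putting everything together yields $F'(t)\le 0$. The main (in fact only) substantive obstacle is the algebraic bookkeeping in the first step: making sure that after substituting all three ODEs, the coefficients of $\int H|\nabla u|$ and $\int|\nabla u|^2$ conspire exactly with the $\frac{2a+1}{4}H^2$ term from \eqref{equ: d A} to form the square $(2a+1)(\frac{H}{2}-\eta|\nabla u|)^2$; this is precisely what forces the shape of the ODE system \eqref{equ: differential equations} and, ultimately, the definitions \eqref{ode-solution1} of $\alpha,\beta,\gamma$.
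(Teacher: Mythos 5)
Your proposal is correct and follows essentially the same route as the paper: differentiate $F$, apply Lemma~\ref{lem:2.4}, substitute the ODE system \eqref{equ: differential equations} so that the $H^2$, $\eta H|\nabla u|$ and $\eta^2|\nabla u|^2$ terms combine into $(2a+1)\bigl(\tfrac{H}{2}-\eta|\nabla u|\bigr)^2$, and then use Gauss--Bonnet together with $\gamma'=-f'\alpha\le 0$, $R_M\ge0$, and $2a+1>0$. The only minor divergence is in the Gauss-curvature step: you invoke the absence of critical points plus asymptotic flatness to conclude $\Sigma_t$ is a $2$-sphere and $\int_{\Sigma_t}K=4\pi$, whereas the paper only uses $H_2(M,\Sigma)=0$ and connectedness of $\Sigma$ to get $\Sigma_t$ connected and hence $\int_{\Sigma_t}K\le 4\pi$ — a weaker but sufficient conclusion that avoids the Morse argument; either suffices since the coefficient in front of $\int K_{\Sigma_t}$ is $\tfrac12$ (your decomposition keeps this correctly, while the paper's displayed final formula moves the term out and appears to drop the $\tfrac12$, which is harmless for the sign conclusion).
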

		\begin{proof}
			Since $|\nabla u| \neq 0$, it  follows from Lemma \ref{lem:2.4} that
			\begin{align*}
				F'(t)=& 4 \pi \gamma'(t) + \alpha'(t) \int_{\Sigma_t} H |\nabla u|   + \beta'(t) \int_{\Sigma_t} |\nabla u|^2   \\
				&- \alpha(t) f'(t) \left\{  \int_{\Sigma_t} |\nabla u|^{-2}|\nabla_{\Sigma_t} |\nabla u||^2 + \frac{1}{2}( R_{M} - K_{\Sigma_t}+ |\overset{\circ}{h}|^2) + \frac{2 a + 1}{4}H^2 \right\} \\
				&-a \beta(t) f'(t) \int_{\Sigma_t} H |\nabla u|   \\
				=& 4 \pi \gamma'(t) - \alpha(t) f'(t) \left\{  \int_{\Sigma_t} |\nabla u|^{-2}|\nabla_{\Sigma_t} |\nabla u||^2 + \frac{1}{2} ( R_{M} - K_{\Sigma_t} + |\overset{\circ}{h}|^2 ) \right\} \\
				&- (2 a + 1) \alpha(t) f'(t) \int_{\Sigma_t} \left( \frac{H}{2} - \eta(u) |\nabla u| \right)^2 \\
				&+ \left[\alpha'(t) - (2 a + 1) \eta(t) f'(t) \alpha(t) - a f'(t) \beta(t)\right]\int_{\Sigma_t} H |\nabla u|    \\
				&+ \left[\beta'(t)  + (2 a + 1) (\eta(t))^2 f'(t) \alpha(t)\right]\int_{\Sigma_t} |\nabla u|^2  .
			\end{align*}
			Using the 	system of ODEs \eqref{equ: differential equations}, we get
			\begin{align*}
				F'(t)			=&\gamma'(t) \left\{  \int_{\Sigma_t} |\nabla u|^{-2}|\nabla_{\Sigma_t} |\nabla u||^2 + \frac{1}{2}R_{M}  +\frac12 |\overset{\circ}{h}|^2 + (2 a + 1) \left( \frac{H}{2} - \eta(u) |\nabla u| \right)^2 \right\}\\
				&+\gamma'(t)\left(4\pi-\int_{\Sigma_t}  K_{\Sigma_t}\right).
			\end{align*}
			%In the last equality we have used the system of ODEs \eqref{equ: differential equations} which $\alpha(t), \beta(t), \gamma(t)$ satisfy.
			By the assumption $H_2(M, \Sigma) = 0$ and $\S$ is connected, we known that $\S_t$ is connected, see for example \cite{M1}. It follows from the Gauss-Bonnet formula that $\int_{\Sigma_t} K_{\Sigma_t} \leq 4\pi$. In view of Proposition \ref{alpha>0} and the assumption \eqref{alpha>0-assumpt}, we see that $\alpha(t)\ge 0$. The assertion follows since $\g'(t)=-f'(t)\a(t)\le 0$, $R_M\ge 0$ and $2a+1=\frac{5-p}{p-1}>0$. 	\end{proof}
		
		\subsection{Monotonicity of $F(t)$ via regularization}
		In order to prove the monotonicity part in Theorem \ref{thm:Monot}, we need to establish the monotone property of $F(t)$ via regularization. Let $u$ be the solution to \eqref{p-laplace}.
		%Let $(M^3, g)$ is a complete three manifold with smooth compact boundary $\p M$ such that
		%\begin{itemize}
		%	\item [(1)] $\p M$ is connected;
		%	\item [(2)] $H_2(M,\p M)$=\{0\};
		%	\item [(3)] $M$ has one end which is asymptotically flat;
		%	\item [(4)]  the scalar curvature $R_{M}$ is nonnegative.
		%\end{itemize}	
		%Let $u$ be the solution of the $p$-harmonic equation so that $u = 0$ on $\p M$ and
		%$u\rightarrow 1$ near infinity.
		%Denoting by $f_{0} (t) = 1 - \int_{t}^{\infty} \mathfrak{c}_{p} s^{-a-1}(1+\frac{m}{2s})^{-2a}$, where $a=\frac{3-p}{p-1}$, and
		%$c$ is such that $\mathfrak{c}_p=ca=({\rm Cap}_p/(4\pi))^{\tfrac1{p-1}}$.
		In the following we denote $$f_0(t)=f(t), \eta_0(t)=\eta(t), \a_0(t)=\a(t), \b_0(t)=\b(t), \g_0(t)=\g(t).$$
		Following \cite{AMMO,HMT}, we approximate $u$ by smooth function $\{v_{\ve}\}_{\ve>0}$, which is a sequence of solutions of
		\begin{equation}\label{eqn:3.1}
			\left\{
			\begin{aligned}
				{\rm div}(|\n v_\ve|_{\ve}^{p-2}\n v_\ve)&=0 \ \ \  \  {\rm in}\ \ M(T),\\
				v_\ve&=0\ \ \ \ {\rm on} \ \ \p M,\\
				v_\ve&=f_0(T)\ \ \  \ {\rm on}\  \ \Sigma(T),
			\end{aligned}\right.
		\end{equation}
		where $M(T)=\{0<u<f_0(T)\}$, $\Sigma(t)=\{u=f_0(t)\}$ and $|\n v_\ve|_{\ve}=\sqrt{|\n v_\ve|^2+\ve^2}$. It is clear that for any $\ve>0$, $v_{\ve}$ is smooth. By \cite{DE1,DE2},  as $\ve\rightarrow 0$, $v_{\ve}\rightarrow u$ in $C^{1,\beta}$-topology  for some $\beta >0$ on any compact subsets of $M(T)$, and $v_{\ve}\rightarrow u$ in $C^{\infty}$-topology on any compact subsets of $M(T)\setminus\{|\n u|\neq 0\}$.
		
		In the following, for notation simplicity, we drop the subscription $\ve$ and simply write $v$ for $v_\ve$,  when no confusion occurs. We define
		\begin{equation}\label{eqn:3.2}
			\left\{
			\begin{aligned}
				&{\rm Cap}_{p,\ve}=\int_{\p M}|\n v|_{\ve}^{p-2}|\n v|=\int_{\Sigma_{t, \ve}}|\n v|_{\ve}^{p-2}|\n v|,\\
				&\mathfrak{c}_{p,\ve}=\left(\frac{{\rm Cap}_{p,\ve}}{4\pi}\right)^{\tfrac1{p-1}},\\
				&m_{\ve}=2 \operatorname{sgn}(k) \left( \mathfrak{c}_{p,\ve}I_{a}(k) \right) ^{\frac{1}{a}}, \\
				&f_{\ve}(t)=1 - \int_{t}^{\infty} \mathfrak{c}_{p,\ve} s^{-a-1}\left(1+\frac{m_{\ve}}{2s}\right)^{-2a},\\
				&\eta_{\ve}(t) = \mathfrak{c}_{p,\ve}^{-1}t^{a}\left(1 + \frac{m_{\ve}}{2 t}\right)^{2a-1}\left(1- \frac{m_{\ve}}{2t}\right).
			\end{aligned}\right.
		\end{equation}
		Denote $\Sigma_{t, \ve}:=\{v_{\ve}=f_{\ve}(t)\}$. When $\Sigma_{t, \ve}$ is a regular hypersurface, we define $F_{\ve}(t)$ as follows:
		\begin{equation}\label{eqn:3.3}
			F_{\ve}(t) = 4 \pi \gamma_{\ve}(t) + \alpha_{\ve}(t) \int_{\Sigma_{t, \ve}} H |\nabla v| + \beta_{\ve}(t) \int_{\Sigma_{t, \ve}} |\nabla v|^2.
		\end{equation}
		Here $\alpha_{\ve}(t)$, $\beta_{\ve}(t)$, $\gamma_{\ve}(t)$ %are special case of
		%%%%%%
		%\begin{equation*}
		%	\begin{aligned}
			%		\alpha_{\ve}(t) =&t(1+\frac{m_{\ve}}{2t})^{2}
			%		\left( \left( C_{2}\mathfrak{c}_{p,\ve}+C_{1} \frac{a}{m_{\ve}} \frac{I_{a}(\frac{m_{\ve}}{2t})}{I_{a}(\frac{m_{\ve}}{2r_{0}})} \right)
			%		\eta_{\ve}(t)-C_{1}\frac{1}{m_{\ve}} \right) ,
			%	\end{aligned}
		%\end{equation*}
		%\begin{equation*}
		%	\begin{aligned}
			%		\beta_{\ve}(t)
			%		=&- \eta_{\ve}(t)\alpha_{\ve}(t)
			%		+ \left( C_{2}\mathfrak{c}_{p,\ve}\frac{m_{\ve}}{a}+C_{1} \frac{I_{a}(\frac{m_{\ve}}{2t})}{I_{a}(\frac{m_{\ve}}{2r_{0}})} \right)
			%		\mathfrak{c}_{p,\ve}^{-2} t^{2a}(1+\frac{m_{\ve}}{2t})^{4a},
			%	\end{aligned}
		%\end{equation*}
		%\begin{equation*}
		%	\begin{aligned}
			%		\gamma_{\ve}(t)
			%		=&- \mathfrak{c}_{p,\ve}^{2} t^{-2a}(1+\frac{m_{\ve}}{2t})^{-4a} \eta_{\ve}(t) \alpha_{\ve}(t)
			%		- \left( C_{2}\mathfrak{c}_{p,\ve}\frac{m_{\ve}}{a}+C_{1} \frac{I_{a}(\frac{m_{\ve}}{2t})}{I_{a}(\frac{m_{\ve}}{2r_{0}})} \right) .
			%	\end{aligned}
		%\end{equation*}
		%%%%%%
		%satisfied the following system of ordinary differential equations
		are solutions to the corresponding systems of ODEs:
		\begin{equation}\label{eqn:3.4}
			\left\{
			\begin{aligned}
				&0=\alpha_{\ve}'(t) - (2 a + 1) \eta_{\ve}(t) f'_{\ve}(t) \alpha_{\ve}(t) - a f'_{\ve}(t) \beta_{\ve}(t), \\
				&0=\beta_{\ve}'(t) + (2 a + 1) (\eta_{\ve}(t))^2 f'_{\ve}(t) \alpha_{\ve}(t), \\
				&0=\gamma'_{\ve}(t) + f'_{\ve}(t) \alpha_{\ve}(t).
			\end{aligned}\right.
		\end{equation}
		By the similar consideration in Propositions \ref{ode-solution} and \ref{alpha>0}, one sees that $\alpha_\ve(t)$ is given by
		\begin{equation*}
			\begin{aligned}
				\alpha_\ve(t)
				=& t\left(1+\frac{m_\ve}{2t}\right)^{2}
				\left\{ \left( C_{2,\ve}\mathfrak{c}_{p,\ve}+C_{1,\ve}\frac{a}{m_\ve} \frac{I_{a}(\frac{m_\ve}{2t})}{I_{a}(\frac{m_\ve}{2r_0})}  \right)
				\eta_\ve(t)-C_{1,\ve}\frac{1}{m_\ve} \right\},
			\end{aligned}
		\end{equation*}
		where $C_{1,\ve}$ and $C_{2,\ve}$ are two constants,
		and $\alpha_\ve(t)\ge 0$ if and only if \begin{eqnarray}\label{alpha>0-assumpt1}
			C_{2,\ve} \ge 0, \hbox{ and }
			\left( C_{2,\ve}\mathfrak{c}_{p,\ve}+C_{1,\ve}\frac{a}{m_\ve} \right)
			\eta_\ve(r_{0}) \ge C_{1,\ve}\frac{1}{m_\ve}. 	
		\end{eqnarray}	
		By the assumption \eqref{alpha>0-assumpt} and the simple fact that $\mathfrak{c}_{p,\ve}, m_\ve, \eta_\ve(r_{0})$ converge to $\mathfrak{c}_{p}, m, \eta(r_{0})$, respectively, as $\ve\to 0$, we may choose appropriate $C_{i,\ve}, i=1,2,$ so that \eqref{alpha>0-assumpt1} holds and $C_{i,\ve}\to C_i, i=1,2,$ as  $\ve\to 0$.
		For such choice, we see $\alpha_\ve(t)\ge 0$. This fact will be used in the following.
		%where $f'_{\ve}(t)=\mathfrak{c}_{p,\ve} t^{- a - 1}(1 + \frac{m_{\ve}}{2t})^{- 2a}$ and $\eta_{\ve}(t) = \mathfrak{c}_{p,\ve}^{-1}t^{a}(1 + \frac{m_{\ve}}{2 t})^{2a-1}(1- \frac{m_{\ve}}{2t})$.
		
		Abuse of notation, we use $\alpha_{\ve}(v),\beta_{\ve}(v), \gamma_{\ve}(v), \eta_{\ve}(v)$ to indicate $\alpha_{\ve}(t), \beta_{\ve}(t), \gamma_{\ve}(t), \eta_{\ve}(t)$ for $t=f_{\ve}^{-1}(v)$, respectively. Thus $\alpha_{\ve}(v),\beta_{\ve}(v), \gamma_{\ve}(v)$,  as functions of $v$,  satisfy	that 
		\begin{equation}\label{eqn:3.5}
			\left\{
			\begin{aligned}
				&0=\alpha_{\ve}'(v) - (2 a + 1) \eta_{\ve}(v) \alpha_{\ve}(v) - a \beta_{\ve}(v), \\
				&0=\beta_{\ve}'(v) + (2 a + 1) (\eta_{\ve}(v))^2  \alpha_{\ve}(v), \\
				&0=\gamma'_{\ve}(v) +  \alpha_{\ve}(v).
			\end{aligned}\right.
		\end{equation}
		It is easy to see that
		\begin{eqnarray}\label{equ-Delta u}
			\Delta v=(2-p)\frac{|\n v|^2}{|\n v|^2_{\ve}}v_{\nu\nu}, 
		\end{eqnarray}
		where $v_{\nu\nu}=\frac{g(\nabla|\nabla v|,\nabla v)}{|\nabla v|}$, and the mean curvature $H$ of $\Sigma_{t, \ve}$ is given by
		\begin{eqnarray}\label{equ-H}
			H=\frac1{|\n v|}(\Delta v-v_{\nu\nu})=-\frac1{|\n v|}\frac{(p-1)|\n v|^2+\ve^2}{|\n v|^2_{\ve}}v_{\nu\nu}. 
		\end{eqnarray}
		%see for example \cite[Lemma 3.1]{HMT}.
		Using \eqref{eqn:3.2} and \eqref{equ-H}, we can write
		\begin{equation*}
			\begin{aligned}
				F_{\ve}(t) %=& 4 \pi \gamma_{\ve}(t) + \alpha_{\ve}(t) \int_{\Sigma_{t, \ve}} H |\nabla v| d \sigma + \beta_{\ve}(t) \int_{\Sigma_{t, \ve}} |\nabla v|^2 d\sigma\\
				=&\int_{\Sigma_{t, \ve}}4\pi\gamma_{\ve}(v) {\rm Cap}_{p,\ve}^{-1}|\n v|^{p-2}_{\ve}|\n v|+\alpha_{\ve}(v)(\Delta v-v_{\nu\nu})+\beta_{\ve}(v)|\n v|^2.
			\end{aligned}
		\end{equation*}

		Let
		$
		X_{\ve}=U_{\ve}+V_{\ve}+W_{\ve}
		$
		where
		\begin{equation*}
			\left\{
			\begin{aligned}
				U_{\ve}=&4\pi\gamma_{\ve}(v) {\rm Cap}_{p,\ve}^{-1}|\n v|^{p-2}_{\ve}\n v,\\
				V_{\ve}=&\alpha_{\ve}(v)\left(\frac{\Delta v}{|\n v|}\n v-\n |\n v|\right),\\
				W_{\ve}=&\beta_{\ve}(v)|\n v|\n v.
			\end{aligned}\right.
		\end{equation*}	
		%	and $X=\lim_{\ve\rightarrow0}X_{\ve}$,
		Then
		\begin{equation}\label{eqn:3.6}
			F_{\ve}(t)=\int_{\Sigma_{t, \ve}}\left\langle X_{\ve},\, \frac{\n v}{|\n v|}\right\rangle.
		\end{equation}
		By adapting the proof of \cite[Lemma 1.3]{AMMO}), since $\alpha_{\ve}(t), \beta_{\ve}(t), \gamma_{\ve}(t)$ converge to $\alpha(t), \beta(t), \gamma(t)$, respectively, as $\ve\to 0$, we have the following lemma.
		\begin{Lemma}\label{lem:3.2}
			Suppose $\{u=f_{0}(t)\}$ is regular for $f_0(t)\in(0, f_0(T))$. Then for  $\ve>0$ small enough, $\Sigma_{t, \ve}=\{v_{\ve}=f_{\ve}(t)\}$ is also regular. Moreover,
			\begin{equation*}
				\lim_{\ve\rightarrow 0}F_{\ve}(t)=F(t).
			\end{equation*}
		\end{Lemma}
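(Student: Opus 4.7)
The plan is to combine three ingredients: (a) the convergence $\mathfrak c_{p,\varepsilon}\to \mathfrak c_p$ (and hence $m_\varepsilon\to m$, $f_\varepsilon\to f_0$, $\eta_\varepsilon\to\eta_0$ in $C^1_{loc}$), (b) continuous dependence of the ODE system \eqref{eqn:3.4} on its coefficients to get $(\alpha_\varepsilon,\beta_\varepsilon,\gamma_\varepsilon)\to(\alpha,\beta,\gamma)$, and (c) the $C^{1,\beta}_{loc}$ and $C^\infty_{loc}$ convergence $v_\varepsilon\to u$ (respectively on $M(T)$ and on $\{|\nabla u|\neq 0\}$) provided by the Di Benedetto/Evans regularity theory, to push the convergence inside the level-set integrals. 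The assumption \eqref{alpha>0-assumpt1} together with $C_{i,\varepsilon}\to C_i$ assures that everything stays in the regime $\alpha_\varepsilon\ge 0$, so there is no issue with the limit.

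First I would address the regularity of $\Sigma_{t,\varepsilon}$. By hypothesis $|\nabla u|\ge 2c>0$ on a compact tubular neighborhood $U$ of $\{u=f_0(t)\}$ in $M(T)$. Since $v_\varepsilon\to u$ in $C^{1,\beta}(U)$, for $\varepsilon$ small we have $|\nabla v_\varepsilon|\ge c$ on $U$. Since also $v_\varepsilon\to u$ uniformly on $U$ and $f_\varepsilon(t)\to f_0(t)$, the implicit function theorem applied to $v_\varepsilon - f_\varepsilon(t)$ shows that $\Sigma_{t,\varepsilon}\subset U$ for $\varepsilon$ small, and $\Sigma_{t,\varepsilon}$ is a smooth hypersurface graph over $\Sigma_t=\{u=f_0(t)\}$ via the flow of $\nabla u/|\nabla u|^2$. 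In particular $\Sigma_{t,\varepsilon}$ is regular.

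Next, to pass the integral $F_\varepsilon(t)$ to the limit I use the $C^\infty_{loc}$ convergence on $\{|\nabla u|\neq 0\}$, which applies on $U$. Parameterizing $\Sigma_{t,\varepsilon}$ over $\Sigma_t$ by the graph map $\Phi_\varepsilon$ from the previous step, both the pulled-back area element and the induced mean curvature $H_\varepsilon$ (a combination of second derivatives of $v_\varepsilon$ divided by $|\nabla v_\varepsilon|$, cf.\ \eqref{equ-H}) converge uniformly on $\Sigma_t$ to the corresponding objects for $u$. Combined with $|\nabla v_\varepsilon|\to|\nabla u|$ uniformly, this yields
\begin{equation*}
\int_{\Sigma_{t,\varepsilon}} H|\nabla v_\varepsilon|\,d\sigma_\varepsilon\ \longrightarrow\ \int_{\Sigma_t} H|\nabla u|\,d\sigma,\qquad \int_{\Sigma_{t,\varepsilon}} |\nabla v_\varepsilon|^2\,d\sigma_\varepsilon\ \longrightarrow\ \int_{\Sigma_t}|\nabla u|^2\,d\sigma.
\end{equation*}
Together with $\alpha_\varepsilon(t)\to\alpha(t)$, $\beta_\varepsilon(t)\to\beta(t)$, $\gamma_\varepsilon(t)\to\gamma(t)$, this gives $F_\varepsilon(t)\to F(t)$.

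The one genuinely delicate point is the convergence $\mathfrak c_{p,\varepsilon}\to\mathfrak c_p$; this is exactly what allows $m_\varepsilon,f_\varepsilon,\eta_\varepsilon$ (and hence the ODE coefficients and the renormalizing prefactor $\operatorname{Cap}_{p,\varepsilon}^{-1}$ appearing in $U_\varepsilon$) to pass to the limit. This should follow in the standard way from the $C^{1,\beta}$ convergence of $v_\varepsilon$ to $u$ near $\partial M$ (or on any regular level $\{u=f_0(s)\}$), writing $\operatorname{Cap}_{p,\varepsilon}=\int_{\partial M}|\nabla v_\varepsilon|_\varepsilon^{p-2}|\nabla v_\varepsilon|\to \int_{\partial M}|\nabla u|^{p-1}=\operatorname{Cap}_p$ and invoking dominated convergence. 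This is the principal obstacle, but it is the same convergence already used in \cite{AMMO,HMT}, so I expect to be able to cite it directly.
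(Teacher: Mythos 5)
Your proposal is correct and follows essentially the same route the paper takes: the paper simply cites the convergence $\alpha_\varepsilon,\beta_\varepsilon,\gamma_\varepsilon\to\alpha,\beta,\gamma$ and refers to the proof of Lemma~1.3 in \cite{AMMO}, which rests exactly on the ingredients you list --- convergence of $\operatorname{Cap}_{p,\varepsilon}$ to $\operatorname{Cap}_p$, the $C^{1,\beta}_{loc}$ convergence $v_\varepsilon\to u$ to keep $\Sigma_{t,\varepsilon}$ regular and close to $\Sigma_t$, and the $C^\infty_{loc}$ convergence away from $\{\nabla u=0\}$ to pass the mean-curvature and gradient integrals to the limit. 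Your reconstruction supplies exactly the details the paper delegates to that citation, so nothing is missing.
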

		%
		
		%Next, we will prove the monotone property of $F(t)$ via regularization.
		
		For $\delta>0$, let
		\begin{equation*}
			\left\{
			\begin{aligned}
				%U_{\ve,\delta}=&U_{\ve}=4\pi\gamma_{\ve}(v) {\rm Cap}_{p,\ve}^{-1}|\n v|^{p-2}_{\ve}\n v,\\
				V_{\ve,\delta}=&\alpha_{\ve}(v)\left(\frac{\Delta v}{|\n v|_{\delta}}\n v-\n |\n v|_{\delta}\right),\\
				W_{\ve,\delta}=&\beta_{\ve}(v)|\n v|_{\delta}\n v,\\
				X_{\ve,\delta}=&U_{\ve}+V_{\ve,\delta}+W_{\ve,\delta}.
			\end{aligned}\right.
		\end{equation*}
		It is clear that $U_{\ve}, V_{\ve,\delta}, W_{\ve,\delta}$ are smooth in $M_T$. Let $t_1 < t_2 $ such that $\Sigma_{t_1, \ve},\Sigma_{t_2, \ve}$ are regular.
		One sees from the divergence theorem and \eqref{eqn:3.6} that
		\begin{equation}\label{eqn:3.7}
			\begin{aligned}
				F_{\ve}(t_2)-F_{\ve}(t_1)=&\lim_{\delta\rightarrow0}\int_{\{f_\ve(t_1)<v<f_\ve(t_2)\}}{\rm div}\ X_{\ve,\delta}.
				%=&\lim_{\delta\rightarrow0}\int_{\{f(t_1)<u<f(t_2)\}}{\rm div}\ X_{\ve,\delta}.
			\end{aligned}
		\end{equation}
		Next we compute the intergrand in the right hand side of\eqref{eqn:3.7}.
		\begin{Lemma}\label{lem:3.3}\
			\begin{itemize}
				\item[(i)] ${\rm div} U_{\ve}=4\pi \gamma'_{\ve}(v) {\rm Cap}_{p,\ve}^{-1}|\n v|_{\ve}^{p-2}|\n v|^2$.
				
				\item[(ii)] At the points where $|\n v|=0$, we have ${\rm div}W_{\ve,\delta}=\beta_{\ve}(v)\delta\Delta v$.
				
				At the points where $|\n v|>0$, we have
				$$
				{\rm div}W_{\ve,\delta}=\beta_{\ve}(v)\left((2-p)\frac{|\n v|_{\delta}|\n v|^2}{|\n v|^2_{\ve}}v_{\nu\nu}+\frac{|\n v|^2}{|\n v|_{\delta}}v_{\nu\nu}\right)+\beta'_{\ve}(v)|\n v|^2|\n v|_{\delta}.
				$$
				
				\item[(iii)] At the points where $|\n v|=0$,  we have ${\rm div}V_{\ve,\delta}\leq0$.
				
				At the points where $|\n v|>0$,
				we have ${\rm div}V_{\ve,\delta}\leq I_{\ve,\delta}$,
				where
				\begin{equation*}
					\begin{aligned}
						I_{\ve,\delta}=&\alpha_{\ve}(v)|\n v|^{-1}_{\delta}\Bigg\{(2-p)^2\frac{|\n v|^4}{|\n v|_{\ve}^4}v_{\nu\nu}^2-(2-p)\frac{|\n v|^4}{|\n v|_{\ve}^2|\n v|_{\delta}^2}v_{\nu\nu}^2-|\n v|^2{\rm Ric}(\nu,\nu)\\
						&-\frac{1}{2}\frac{|\nabla v|^2}{|\nabla v|_{\ve}^2} \left( (p-2)^2\frac{|\nabla v|^2}{|\nabla v|_{\ve}^2} + 2p-3 \right)v_{\nu\nu}^2\Bigg\}\\
						&+\alpha'_{\ve}(v)\left((2-p)\frac{|\n v|^4}{|\n v|^2_{\ve}|\n v|_{\delta}}-\frac{|\n v|^2}{|\n v|_{\delta}}\right)v_{\nu\nu}.
					\end{aligned}
				\end{equation*}
			\end{itemize}
			%and $\overset{\circ}h$ is the trace-free second fundamental form of the level surface with respect
			%to the unit normal $\nu=\frac{\n u}{|\n u|}$.
			%where $\overset{\circ}h$ the trace-free second fundamental form of the level surface with respect
			%to the unit normal $\nu=\frac{\n u}{|\n u|}$.
		\end{Lemma}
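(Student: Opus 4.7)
The plan is to verify each of the three divergence identities by straightforward differentiation, reserving the refined Kato-type inequality for the most delicate case, namely the upper bound on $\operatorname{div}V_{\ve,\delta}$ at non-critical points.

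For part (i), I would apply the Leibniz rule to $U_\ve = 4\pi\gamma_\ve(v)\operatorname{Cap}_{p,\ve}^{-1}|\nabla v|_\ve^{p-2}\nabla v$: the term containing $\operatorname{div}(|\nabla v|_\ve^{p-2}\nabla v)$ vanishes by the regularized $p$-harmonic equation \eqref{eqn:3.1}, leaving only the $\gamma'_\ve(v)$ contribution. For part (ii), expand $\operatorname{div}(\beta_\ve(v)|\nabla v|_\delta\nabla v)$ into three terms via the product rule. At a point where $|\nabla v|>0$, I use $\nabla|\nabla v|_\delta = \frac{|\nabla v|}{|\nabla v|_\delta}\nabla|\nabla v|$ together with $\nabla v\cdot\nabla|\nabla v| = |\nabla v|v_{\nu\nu}$ to convert the middle term, and substitute $\Delta v$ from \eqref{equ-Delta u} to recover the stated formula. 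At a critical point, $\nabla|\nabla v|^2 = 2\nabla^2 v(\nabla v,\cdot) = 0$, so $\nabla|\nabla v|_\delta = 0$, and only the $\beta_\ve(v)\delta\Delta v$ term survives.

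Part (iii) at critical points first requires observing that the regularized equation forces $\Delta v = 0$ there: expanding $\operatorname{div}(|\nabla v|_\ve^{p-2}\nabla v) = |\nabla v|_\ve^{p-2}\Delta v + \nabla(|\nabla v|_\ve^{p-2})\cdot\nabla v = 0$ and evaluating at $\nabla v = 0$ (where the second factor vanishes and $|\nabla v|_\ve = \ve>0$) yields $\Delta v = 0$. A direct computation of each piece of $\operatorname{div}V_{\ve,\delta}$ then shows that the $\frac{\Delta v}{|\nabla v|_\delta}\nabla v$ part contributes zero, while Bochner's formula at a critical point gives $\Delta|\nabla v|_\delta = |\nabla^2 v|^2/\delta$; combining these yields $\operatorname{div}V_{\ve,\delta} = -\alpha_\ve(v)|\nabla^2 v|^2/\delta \le 0$, using $\alpha_\ve(v)\ge 0$ from \eqref{alpha>0-assumpt1}.

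The hardest step is the non-critical case of (iii). I would differentiate both terms of $V_{\ve,\delta}$ by the Leibniz rule; the $\nabla v\cdot\nabla\Delta v$ pieces coming from $\operatorname{div}(\frac{\Delta v}{|\nabla v|_\delta}\nabla v)$ and from the Bochner formula $\frac{1}{2}\Delta|\nabla v|^2 = |\nabla^2 v|^2 + \nabla v\cdot\nabla\Delta v + \operatorname{Ric}(\nabla v,\nabla v)$ applied inside $\Delta|\nabla v|_\delta$ must cancel. After substituting $\Delta v = (2-p)\frac{|\nabla v|^2}{|\nabla v|_\ve^2}v_{\nu\nu}$ everywhere, every term of $I_{\ve,\delta}$ appears directly except the residue $-|\nabla^2 v|^2 + \frac{|\nabla|\nabla v|^2|^2}{4|\nabla v|_\delta^2}$. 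The main obstacle is bounding this residue above by the quadratic $-\frac{1}{2}\frac{|\nabla v|^2}{|\nabla v|_\ve^2}\left((p-2)^2\frac{|\nabla v|^2}{|\nabla v|_\ve^2} + 2p-3\right)v_{\nu\nu}^2$. In a local orthonormal frame with $e_1 = \nu$, I would compute
\begin{equation*}
|\nabla^2 v|^2 - \frac{|\nabla|\nabla v|^2|^2}{4|\nabla v|_\delta^2} = v_{11}^2\frac{\delta^2}{|\nabla v|_\delta^2} + \sum_{j\ge 2} v_{1j}^2\left(1 + \frac{\delta^2}{|\nabla v|_\delta^2}\right) + \sum_{i,j\ge 2} v_{ij}^2,
\end{equation*}
and then use the dimension-three Cauchy--Schwarz estimate $\sum_{i,j\ge 2}v_{ij}^2\ge\frac{1}{2}(\sum_{i\ge 2}v_{ii})^2 = \frac{1}{2}(\Delta v - v_{\nu\nu})^2$ to deduce that the residue is bounded above by $-\frac{1}{2}(\Delta v - v_{\nu\nu})^2$. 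A final algebraic check, reducing to $\frac{|\nabla v|^2}{|\nabla v|_\ve^2}\le 1$, matches this quantity with the stated bound in $I_{\ve,\delta}$. This refined Kato-type inequality, specific to dimension three, is the crux of the whole lemma.
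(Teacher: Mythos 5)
Your proposal is correct and follows essentially the same route as the paper: after the Bochner cancellation, your bound on the residue $-|\nabla^2 v|^2 + |\nabla|\nabla v|^2|^2/(4|\nabla v|_\delta^2)$ via the trace inequality $\sum_{i,j\ge 2} v_{ij}^2 \ge \tfrac12(\Delta v - v_{\nu\nu})^2$ together with the final algebraic reduction to $|\nabla v|^2/|\nabla v|_\ve^2 \le 1$ is exactly the paper's Kato-type estimate \eqref{eqn:Kato} with the nonnegative terms $|\overset{\circ}{h}|^2$ and $|\nabla^T|\nabla v||^2$ discarded, merely re-expressed in an adapted orthonormal frame. The only (slight) departure is your handling of critical points, where you compute $\operatorname{div}V_{\ve,\delta}=-\alpha_\ve(v)|\nabla^2 v|^2/\delta\le 0$ directly (using $\Delta v=0$ there) rather than invoking the boundedness of $v_{\nu\nu}$ and continuity as the paper does; this is if anything a cleaner argument.
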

		\begin{proof}(i)(ii) By simple computation, using \eqref{equ-Delta u}, we get (i)(ii). Next we prove (iii).
			%(i)
			%\begin{equation*}
			%	\begin{aligned}
				%		{\rm div}U_{\ve,\delta}={\rm div}\left(4\pi\gamma_{\ve}(u){\rm Cap}_{p,\ve}^{-1}|\n u|^{p-2}_{\ve}\n u\right)
				%		=4\pi\gamma'_{\ve}(u){\rm Cap}_{p,\ve}^{-1}|\n u|^{p-2}_{\ve}|\n u|^2.
				%	\end{aligned}
			%\end{equation*}
			
			%(ii) If $|\n u|=0$, ${\rm div}W_{\ve,\delta}=\beta_{\ve}(u)\delta\Delta u$.
			%If $|\n u|>0$, by Lemma \ref{lem:3.1}, we have
			%\begin{equation*}
			%	\begin{aligned}
				%		{\rm div}W_{\ve,\delta}
				%		=&\beta_{\ve}(u)\left((2-p)\frac{|\n u|_{\delta}|\n u|^2}{|\n u|^2_{\ve}}u_{\nu\nu}+\frac{|\n u|^2}{|\n u|_{\delta}}u_{\nu\nu}\right)+\beta'_{\ve}(u)|\n u|^2|\n u|_{\delta}.
				%	\end{aligned}
			%\end{equation*}
			From the proof of Lemma 3.3 (iii) in \cite{HMT}, we can see that
			\begin{equation}\label{xeq11}
				\begin{aligned}
					{\rm div}\left(\frac{\Delta v}{|\n v|_{\delta}}\n v\right)
					=&(2-p)^2\frac{|\n v|^4}{|\n v|^4_{\ve}|\n v|_{\delta}}v_{\nu\nu}^2-(2-p)\frac{|\n v|^4}{|\n v|^2_{\ve}|\n v|_{\delta}^3}v_{\nu\nu}^2+\frac{\langle\n\Delta v,\, \n v\rangle}{|\n v|_{\delta}},
				\end{aligned}
			\end{equation}
			and
			\begin{equation}\label{xeq1-1}
				\begin{aligned}
					{\rm div}(\n|\n v|_{\delta})=&|\n v|^{-1}_{\delta}\left(|\n^2 v|^2+{\rm Ric}(\n v,\n v)\right)-\frac{|\n v|^2}{|\n v|^3_{\delta}}|\n |\n v||^2+|\n v|^{-1}_{\delta}\langle\n v,\, \n\Delta v\rangle.
				\end{aligned}
			\end{equation}
			Recall that  for $\forall X,Y \in T \Sigma_{t, \ve}$, we have
			\[
			\begin{aligned}
				\n^2v (X, Y)= - |\nabla v| g(\nabla_{Y}X, \nu)= - |\nabla v| h (X, Y).
			\end{aligned}
			\]
			It follows that	\[
			|\n^2 v|^2 = |\nabla v|^2 |h|^2 + 2 |\nabla^T|\nabla v||^2 + v_{\nu\nu}^2.
			\]
			Using \eqref{equ-H} and $|\overset{\circ}{h}|^2 = |{h}|^2 - \frac{1}{2}H^2$,  we get the following Kato-type inequality 	
			\begin{equation} \label{eqn:Kato}
				\begin{aligned}
					|\n^2 v|^2
					=& |\nabla v|^2 |\overset{\circ}{h}|^2 + 2|\nabla^{T}|\nabla v||^2
					+ v_{\nu\nu}^2 + \frac{1}{2}|\nabla v|^2 H^2 \\
					=& |\nabla v|^2 |\overset{\circ}{h}|^2 + 2|\nabla^{T}|\nabla v||^2
					+ \left[ 1+ \frac{1}{2}\left( (p-2)\frac{|\nabla v|^2}{|\nabla v|_{\ve}^2} + 1 \right)^2 \right] v_{\nu\nu}^2.
				\end{aligned}
			\end{equation}
			By using $|\n|\n v||^2=|\n^T|\n v||^2+v_{\nu\nu}^2$ and \eqref{eqn:Kato} in \eqref{xeq1-1},  we see
			\begin{equation}\label{xeq22}
				\begin{aligned}
					{\rm div}(\n|\n v|_{\delta})=& |\n v|^{-1}_{\delta}\Bigg\{|\n v|^2|\overset{\circ}{h}|^2+2|\n^T|\n v||^2+\Big(1+\frac{1}{2}\big( (p-2)\frac{|\nabla v|^2}{|\nabla v|_{\ve}^2} + 1 \big)^2\Big)v_{\nu\nu}^2\\
					&+|\n v|^2{\rm Ric}(\nu,\nu)\Bigg\}
					-\frac{|\n v|^2}{|\n v|^3_{\delta}}(|\n^T|\n v||^2+v_{\nu\nu}^2)+|\n v|^{-1}_{\delta}\langle\n v,\, \n\Delta v\rangle	\\
					\ge&		 |\n v|^{-1}_{\delta}\Bigg\{\frac{1}{2}\frac{|\nabla v|^2}{|\nabla v|_{\ve}^2} \left( (p-2)^2\frac{|\nabla v|^2}{|\nabla v|_{\ve}^2} + 2p-3 \right)v_{\nu\nu}^2+|\n v|^2{\rm Ric}(\nu,\nu)\Bigg\}
					\\&+|\n v|^{-1}_{\delta}\langle\n v,\, \n\Delta v\rangle.		
				\end{aligned}
			\end{equation}
			Here, in the second inequality we have used that $\frac{|\n v|^2}{|\n v|_{\ve}^2}\leq1$ and we also dropped the term involving $|\n^T|\n v||^2$ and $|\overset{\circ}{h}|^2$ which is nonnegative.
			
			Combining \eqref{xeq11} and \eqref{xeq22} and using \eqref{equ-Delta u}, we get the assertion for $|\n v|>0$.

			If $|\n v|=0$ at $x$, since $v_{\nu\nu}$ is bounded as $v$ is smooth, we see that $I_{\ve,\delta}=0$. Hence ${\rm div}V_{\ve,\delta}\leq0$.
		\end{proof}
		
		\begin{Remark}
			We use Hirsch-Miao-Tam's \cite{HMT} idea to prove the above lemma. Here, we extend Hirsch-Miao-Tam's argument from $1<p\leq 2$ (Lemma 3.4 of \cite{HMT}) to $1<p<3$ by using an improved inequality to address $H^{2}$. 
			In \eqref{xeq22}, we used 
			$$
			\begin{aligned}
				H^{2}
				=& \frac{1}{|\n v|^{2}}\left( \frac{(p-1)|\n v|^2+\ve^2}{|\n v|^2_{\ve}} \right)^{2}v_{\nu\nu}^{2} \\
				=& \frac{(p-1)^2}{|\n v|^2}\frac{|\n v|^4}{|\n v|^4_{\ve}}v^2_{\nu\nu} + \left( (2p-3)\frac{|\nabla v|^{2}}{|\nabla v|^2_{\ve}}+1 \right)(1-\frac{|\nabla v|^{2}}{|\nabla v|^2_{\ve}}) \\
				\ge& \frac{(p-1)^2}{|\n v|^2}\frac{|\n v|^4}{|\n v|^4_{\ve}}v^2_{\nu\nu} + (2p-3)\frac{|\nabla v|^{2}}{|\nabla v|^2_{\ve}} (1-\frac{|\nabla v|^{2}}{|\nabla v|^2_{\ve}}), \\
			\end{aligned}
			$$
			while Hirsch-Miao-Tam used $H^2\geq\frac{(p-1)^2}{|\n v|^2}\frac{|\n v|^4}{|\n v|^4_{\ve}}v^2_{\nu\nu}.$ The above estimate can be used to eliminate some unfavorable terms in \eqref{eqn:3.10-0}. 
		\end{Remark}

		\begin{Proposition}\label{lem:3.4}
			Let
			$\{u=f_0(t_1)\}$, $\{u=f_0(t_2)\}$ be two regular level sets for $t_1< t_2$.
			Assume $\alpha_{\ve}(t)\ge 0$ on $(\frac{m_{\ve}}{2},+\infty)$. Then the following inequality holds:
			\begin{eqnarray}\label{asymp-monot}
				F_{\ve}(t_2)-F_{\ve}(t_1)%\leq&\int_{\tau_1}^{\tau_2}\gamma'_{\ve}(\tau)\left(4\pi -\int_{\Sigma(\tau)} K\right)
				\leq \ve\int_{\{f_\ve(t_1)<v<f_\ve(t_2)\}}\frac{|\alpha'_{\ve}(v)-\beta_{\ve}(v)|^2}{\alpha_{\ve}(v)}|\n v|.
			\end{eqnarray}
		\end{Proposition}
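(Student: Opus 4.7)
The plan is to start from \eqref{eqn:3.7},
$$F_\ve(t_2)-F_\ve(t_1)=\lim_{\delta\to 0}\int_{\{f_\ve(t_1)<v<f_\ve(t_2)\}}\operatorname{div}X_{\ve,\delta},$$
and to bound the integrand using Lemma \ref{lem:3.3}. On $\{|\nabla v|=0\}$ the contributions are non-positive or $O(\delta)$, so that locus is harmless in the limit; on $\{|\nabla v|>0\}$ one has the pointwise bound
$$\operatorname{div}X_{\ve,\delta}\le \operatorname{div}U_\ve+I_{\ve,\delta}+\operatorname{div}W_{\ve,\delta},$$
and the work is to organize this right-hand side into manifestly non-positive pieces plus an $\ve$-scaled remainder matching \eqref{asymp-monot}.

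To carry out this organization I would group the terms by the differential quantity they involve. First, the $\operatorname{Ric}(\nu,\nu)$ contribution (coefficient $-\alpha_\ve|\nabla v|^2/|\nabla v|_\delta\le 0$), the Kato-type residuals $-|\nabla^T|\nabla v||^2$ and $-|\overset{\circ}{h}|^2|\nabla v|^2$ from \eqref{eqn:Kato}, and the term $\operatorname{div}U_\ve=4\pi\gamma_\ve'\operatorname{Cap}_{p,\ve}^{-1}|\nabla v|_\ve^{p-2}|\nabla v|^2$ combine, after the coarea formula, the Gauss equation \eqref{equ:gauss}, $R_M\ge 0$, and Gauss--Bonnet $\int K_{\Sigma_{t,\ve}}\le 4\pi$ on the connected level sets, into a non-positive contribution---exactly as in the smooth computation carried out at the start of this section. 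Second, the remaining quadratic form $A_{\ve,\delta}v_{\nu\nu}^2+B_{\ve,\delta}v_{\nu\nu}$ in the normal-normal Hessian, with $A_{\ve,\delta}\propto -\alpha_\ve\le 0$ and $B_{\ve,\delta}$ built from $\alpha_\ve'$, $\beta_\ve$ and the ratios $q:=|\nabla v|^2/|\nabla v|_\ve^2$ and $|\nabla v|/|\nabla v|_\delta$, is the main target. Substituting $\alpha_\ve'=(2a+1)\eta_\ve\alpha_\ve+a\beta_\ve$ and $\beta_\ve'=-(2a+1)\eta_\ve^2\alpha_\ve$ from \eqref{eqn:3.5}, together with the algebraic identity $a(1-p)+(3-p)=0$ (true by the definition of $a$), collapses the leading $O(1)$ part of $B_{\ve,\delta}$ onto a multiple of $\eta_\ve\alpha_\ve|\nabla v|$; completing the square in $v_{\nu\nu}$ then produces the smooth-case non-positive square $-(2a+1)\alpha_\ve(H/2-\eta_\ve|\nabla v|)^2$ plus a mismatch arising from $q=1-\ve^2/|\nabla v|_\ve^2$, which after a Cauchy--Schwarz-type estimate is bounded by the claimed remainder $\ve|\alpha_\ve'-\beta_\ve|^2|\nabla v|/\alpha_\ve$.

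The main technical obstacle is the bookkeeping of the second step: tracking the several distinct $v_{\nu\nu}^2$-contributions in $I_{\ve,\delta}$ (from $\operatorname{div}(\frac{\Delta v}{|\nabla v|_\delta}\nabla v)$, from $|\nabla^2v|^2$, and from the Kato residual \eqref{eqn:Kato}) together with the two $v_{\nu\nu}$-linear pieces (from $I_{\ve,\delta}$ and $\operatorname{div}W_{\ve,\delta}$), and verifying that the coefficient $1/\alpha_\ve$ emerges correctly. The sharpened $H^2$-inequality recorded in the Remark following Lemma \ref{lem:3.3}---which adds the term $(2p-3)\frac{|\nabla v|^2}{|\nabla v|_\ve^2}(1-\frac{|\nabla v|^2}{|\nabla v|_\ve^2})$ to the Hirsch--Miao--Tam bound---plays the decisive role here, cancelling an obstructive cross term in the $v_{\nu\nu}^2$-coefficient that otherwise blocks the square completion when $p\in(2,3)$, and thereby extending the argument from $p\le 2$ to the full range $p<3$. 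Once the pointwise inequality is in place, dominated convergence as $\delta\to 0$ concludes.
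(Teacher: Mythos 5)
Your sketch is correct and follows essentially the same route as the paper. Both start from the divergence identity \eqref{eqn:3.7} and the pointwise estimates in Lemma~\ref{lem:3.3}, send $\delta\to 0$ by dominated convergence, then split the integrand into (a) a Ricci/Gauss--Bonnet group (via the Gauss equation $\operatorname{Ric}(\nu,\nu)=\frac12(R_M-2K_{\Sigma_t}+\frac12H^2-|\overset{\circ}{h}|^2)$, $R_M\ge 0$, connectedness of level sets and $\int K\le 4\pi$), (b) a complete-the-square quadratic in $v_{\nu\nu}$ whose discriminant vanishes after substituting the ODE system \eqref{eqn:3.5} — where indeed the identity $a(1-p)+(3-p)=0$ makes the $\beta_\ve$-terms cancel — and (c) an $\ve$-weighted Cauchy--Schwarz remainder coming from $1-\frac{|\nabla v|^2}{|\nabla v|_\ve^2}=\frac{\ve^2}{|\nabla v|_\ve^2}$, exactly as in the paper's \eqref{eqn:3.11-1}--\eqref{eqn:3.11}; you also correctly pinpoint the sharpened $H^2$-inequality of Lemma~\ref{lem:3.3}(iii) as the step that extends Hirsch--Miao--Tam's range $1<p\le 2$ to all $p\in(1,3)$.
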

		
		\begin{proof}
			
			By Lemma \ref{lem:3.3}, for any $\delta>0$, we have
			\begin{equation*}
				\begin{aligned}
					F_{\ve}(t_2)-F_{\ve}(t_1)=&\int_{\{f_\ve(t_1)<v<f_\ve(t_2)\}}{\rm div}X_{\ve,\delta}\\ \leq&\int_{\{f_\ve(t_1)<v<f_\ve(t_2)\}}{\rm div}U_{\ve}+C\delta+\int_{\{f_\ve(t_1)<v<f_\ve(t_2)\}}({\rm div}W_{\ve,\delta}+I_{\ve,\delta})\mathbf{1}_{\{|\n v|>0\}},
				\end{aligned}
			\end{equation*}
			for some $C > 0$ independent of $\delta$, where $\mathbf{1}_K$ is the characteristic function of $K$.
			It is easy to check that ${\rm div}W_{\ve,\delta}$ and $I_{\ve,\delta}$ is uniformly bounded in $\delta$ for $\{f_\ve(t_1)<v<f_\ve(t_2)\}$. By dominated convergence theorem,
			we have as $\delta\rightarrow0$,
			\begin{equation}\label{eqn:3.8}
				\begin{aligned}
					F_{\ve}(t_2)-F_{\ve}(t_1)	\le &\int_{\{f_\ve(t_1)<v<f_\ve(t_2)\}}({\rm div}U_{\ve}+{\rm div}W_{\ve,0}+I_{\ve,0})\mathbf{1}_{\{|\n v|>0\}}.
				\end{aligned}
			\end{equation}
			Here ${\rm div}W_{\ve,0}$ and $I_{\ve,0}$ are given respectively by ${\rm div}W_{\ve,\d}$ and $I_{\ve,\d}$ for $\d=0$ which make sense.
			On the other hand, by \eqref{equ-H} and Lemma \ref{lem:3.3} (iii), we have that away from  $\{\n v= 0\}$,
			\begin{equation}\label{eqn:3.10-0}
				\begin{aligned}
					I_{\ve,0}=&\alpha_{\ve}(v)|\n v|^{-1}\Bigg\{((2-p)^2-(2-p))\frac{|\n v|^4}{|\n v|_{\ve}^4}v_{\nu\nu}^2-(2-p)\frac{|\n v|^2}{|\n v|_{\ve}^2}v_{\nu\nu}^2\\
					&-\frac{1}{2}\frac{|\nabla v|^2}{|\nabla v|_{\ve}^2} \left( ((p-2)^2+2p-4)\frac{|\nabla v|^2}{|\nabla v|_{\ve}^2} + 2p-3 \right)v_{\nu\nu}^2-|\n v|^2{\rm Ric}(\nu,\nu)\Bigg\}\\
					&+\alpha'_{\ve}(v)\left((2-p)\frac{|\n v|^3}{|\n v|^2_{\ve}}-|\n v|\right)v_{\nu\nu}\\
					=&\alpha_{\ve}(v)|\n v|^{-1}\Bigg\{(2-p)(1-p)\frac{|\n v|^4}{|\n v|_{\ve}^4}v_{\nu\nu}^2-\frac14 H^2|\n v|^2+|\n v|^2K-\frac12|\n v|^2|\overset{\circ} {h}|^2\\
					&-\frac{1}{2}\frac{|\nabla v|^2}{|\nabla v|_{\ve}^2} \left(\left( (p-1)^2-1\right)\frac{|\nabla v|^2}{|\nabla v|_{\ve}^2} +1 \right)v_{\nu\nu}^2-\frac12|\n v|^2R_M\Bigg\}\\
					&+\alpha'_{\ve}(v)\left((2-p)\frac{|\n v|^3}{|\n v|^2_{\ve}}-|\n v|\right)v_{\nu\nu},
				\end{aligned}
			\end{equation} Here in the second inequality  we have used the fact
			that
			$$
			{\rm Ric}(\nu,\nu)=\frac12(R_M-2K_{\S_t}+\frac12H^2-|\overset\circ{h}|^2).$$
			Next, from \eqref{equ-H}, we see that
			\begin{eqnarray}\label{equ-H1}
				H^2\geq\frac{(p-1)^2}{|\n v|^2}\frac{|\n v|^4}{|\n v|^4_{\ve}}v^2_{\nu\nu}.
			\end{eqnarray}
			Using \eqref{equ-H1} and the fact $R_M\ge 0$ in \eqref{eqn:3.10-0}, , we get
			\begin{equation}\label{eqn:3.10}
				\begin{aligned}
					I_{\ve,0}\leq&\alpha_{\ve}(v)\Bigg\{-\frac14(2a+1)(p-1)^2\frac{|\n v|^3}{|\n v|^4_{\ve}}v^2_{\nu\nu}+|\n v|K- \frac12\frac{|\nabla v|}{|\nabla v|_{\ve}^2} \left( 1-\frac{|\nabla v|^2}{|\nabla v|_{\ve}^2} \right)v_{\nu\nu}^2\Bigg\}\\
					&+\alpha'_{\ve}(v)\left\{(1-p)\frac{|\n v|^2}{|\n v|^2_{\ve}}-\left(1-\frac{|\n v|^2}{|\n v|^2_{\ve}}\right)\right\}|\n v|v_{\nu\nu}.
				\end{aligned}
			\end{equation}
			
			From  Lemma \ref{lem:3.3} (i)-(ii) and \eqref{eqn:3.10}, using also  \eqref{eqn:3.5},   we obtain
			\begin{equation}\label{eqn:3.11-1}
				\begin{aligned}
					&{\rm div}U_{\ve}+{\rm div}W_{\ve,0}+I_{\ve,0}	\\
					\leq & 4\pi \gamma'_{\ve}(v) {\rm Cap}_{p,\ve}^{-1}|\n v|_{\ve}^{p-2}|\n v|^2\\
					&+\beta_{\ve}(v)\left\{(3-p)\frac{|\n v|^3}{|\n v|^2_{\ve}}v_{\nu\nu}+\left(1-\frac{|\n v|^2}{|\n v|^2_{\ve}}\right)|\n v|v_{\nu\nu}\right\}+\beta'_{\ve}(v)|\n v|^3
					\\&+\alpha_{\ve}(v)\Bigg\{-\frac14(2a+1)(p-1)^2\frac{|\n v|^3}{|\n v|^4_{\ve}}v^2_{\nu\nu}+|\n v|K- \frac12\frac{|\nabla v|}{|\nabla v|_{\ve}^2} \left( 1-\frac{|\nabla v|^2}{|\nabla v|_{\ve}^2} \right)v_{\nu\nu}^2\Bigg\}\\
					&+\alpha'_{\ve}(v)\left\{(1-p)\frac{|\n v|^2}{|\n v|^2_{\ve}}-\left(1-\frac{|\n v|^2}{|\n v|^2_{\ve}}\right)\right\}|\n v|v_{\nu\nu}
					\\
					=& -4\pi \alpha_{\ve}(v) {\rm Cap}_{p,\ve}^{-1}|\n v|_{\ve}^{p-2}|\n v|^2+\alpha_{\ve}(v)|\n v|K-\alpha_{\ve}(v)\frac14(2a+1)(p-1)^2\frac{|\n v|^3}{|\n v|^4_{\ve}}v^2_{\nu\nu}\\
					&+(2a+1)(1-p)\eta_{\ve}(v)\alpha_{\ve}(v)\frac{|\n v|^3}{|\n v|^2_{\ve}}v_{\nu\nu}-(2 a + 1) (\eta_{\ve}(v))^2  \alpha_{\ve}(v)|\n v|^3\\
					&-(\alpha'_{\ve}(v)-\beta_{\ve}(v))\left(1-\frac{|\n v|^2}{|\n v|^2_{\ve}}\right)|\n v|v_{\nu\nu}-\frac{\alpha_{\ve}(v)}{2}\frac{|\nabla v|}{|\nabla v|_{\ve}^2} \left( 1-\frac{|\nabla v|^2}{|\nabla v|_{\ve}^2} \right)v_{\nu\nu}^2.
				\end{aligned}
			\end{equation}
			Using the Cauchy-Schwarz inequality, we have
			$$-\alpha_{\ve}(v)\frac14(2a+1)(p-1)^2\frac{|\n v|^3}{|\n v|^4_{\ve}}v^2_{\nu\nu}+(2a+1)(1-p)\eta_{\ve}(v)\alpha_{\ve}(v)\frac{|\n v|^3}{|\n v|^2_{\ve}}v_{\nu\nu}-(2 a + 1) (\eta_{\ve}(v))^2  \alpha_{\ve}(v)|\n v|^3\le 0,$$
			$$-(\alpha'_{\ve}(v)-\beta_{\ve}(v))|\n v|v_{\nu\nu}-\frac{\alpha_{\ve}(v)}{2}\frac{|\nabla v|}{|\nabla v|_{\ve}^2} v_{\nu\nu}^2\le \frac{|\alpha'_{\ve}(v)-\beta_{\ve}(v)|^2}{\alpha_{\ve}(v)}|\n v|^2_{\ve}|\n v|.$$
			Therefore,  we deduce from \eqref{eqn:3.11-1} that
			\begin{equation}\label{eqn:3.11}
				\begin{aligned}
					&{\rm div}U_{\ve}+{\rm div}W_{\ve,0}+I_{\ve,0}	\\
					\leq& \ve\frac{|\alpha'_{\ve}(v)-\beta_{\ve}(v)|^2}{\alpha_{\ve}(v)}|\n v|-\alpha_{\ve}(v)\left(4\pi {\rm Cap}_{p,\ve}^{-1}|\n v|_{\ve}^{p-2}|\n v|^2-|\n v|K\right).
				\end{aligned}
			\end{equation}
			%Let $\mathcal{A}\subset[\tau_1,\tau_2]$ be an open set containing the critical values of $u$, and denote the complementary closed set by $\mathcal{B}\subset[\tau_1,\tau_2]$, where $\tau_1 = f(t_1), \tau_2 = f(t_2)$.
			%%%%%%%%%%%%%%%%%%%%%%%%%%%%%%%
			Since $v$ is smooth,  by Sard's theorem, the set $\mathcal{A}$ of critical values of $v$ is of measure zero.
			By the co-area formula, using \eqref{eqn:3.11} in \eqref{eqn:3.8},  we have
			\begin{equation*}\label{eqn:3.26}
				\begin{aligned}
					F_{\ve}(t_2)-F_{\ve}(t_1)\leq &\ve\int_{\{f_\ve(t_1)<v<f_\ve(t_2)\}}\frac{|\alpha'_{\ve}(v)-\beta_{\ve}(v)|^2}{\alpha_{\ve}(v)}|\n v|- \int_{f_\ve(t_1)}^{f_\ve(t_2)}\alpha_{\ve}(\tau)
					\left( 4\pi -\int_{\{v=\tau\}} K \right) d\tau\\
					\leq&\ve\int_{\{f_\ve(t_1)<v<f_\ve(t_2)\}}\frac{|\alpha'_{\ve}(v)-\beta_{\ve}(v)|^2}{\alpha_{\ve}(v)}|\n v|.
				\end{aligned}
			\end{equation*}
			where we have used $4\pi -\int_{\{v=\tau\}} K\ge 0$, for each regular level set $\{v=\tau\}$ is connected.
			This completes  the proof of Proposition \ref{lem:3.4}.
		\end{proof}
		
		By letting $\ve\to 0$ in \eqref{asymp-monot}, in view of Lemma \ref{lem:3.2}, we see the following
		\begin{Corollary} Let
			$\{u=f_0(t_1)\}$, $\{u=f_0(t_2)\}$ be two regular level sets for $t_1< t_2$. Then $F(t_2)\le F(t_1)$.
		\end{Corollary}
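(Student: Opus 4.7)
The statement to be proved is the Corollary asserting $F(t_2)\le F(t_1)$ for $t_1<t_2$ with $\{u=f_0(t_1)\}$ and $\{u=f_0(t_2)\}$ regular level sets, and the text preceding it already advertises the strategy: pass to the limit $\ve\to 0$ in the approximate monotonicity inequality of Proposition~\ref{lem:3.4} and invoke Lemma~\ref{lem:3.2}. The plan is to make this limit rigorous by controlling the approximation error on the right-hand side.

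First I would recall that by Proposition~\ref{lem:3.4},
\[
F_{\ve}(t_2)-F_{\ve}(t_1)\le \ve\int_{\{f_\ve(t_1)<v_\ve<f_\ve(t_2)\}}\frac{|\alpha'_{\ve}(v_\ve)-\beta_{\ve}(v_\ve)|^2}{\alpha_{\ve}(v_\ve)}|\nabla v_\ve|,
\]
while Lemma~\ref{lem:3.2} gives $F_{\ve}(t_i)\to F(t_i)$ as $\ve\to 0$ for $i=1,2$. So the corollary reduces to proving that the right-hand side above vanishes in the limit. The idea is that the whole integrand is uniformly bounded on the relevant region, so the explicit prefactor $\ve$ kills it.

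Second, I would establish the required uniform bounds. On the region $\{f_\ve(t_1)<v_\ve<f_\ve(t_2)\}$, the standard regularity $v_\ve\to u$ in $C^{1,\beta}_{\mathrm{loc}}$ on $\{u<f_0(T)\}$ (as recalled after \eqref{eqn:3.1}) gives a uniform $L^\infty$ bound on $|\nabla v_\ve|$, and the domain itself has uniformly bounded volume (it is contained in a fixed compact set, since $f_\ve(t_i)\to f_0(t_i)$ and $\{u\le f_0(t_2)+\delta\}$ is compact for small $\delta$). Moreover, from \eqref{eqn:3.2} and the prescription $C_{i,\ve}\to C_i$, the one-variable functions $\alpha_\ve,\beta_\ve,\alpha'_\ve,\eta_\ve$ converge uniformly to $\alpha,\beta,\alpha',\eta$ on $[r_0,T']$ for any $T'>r_0$, while the hypothesis $\alpha_\ve\ge 0$ together with the ODE \eqref{eqn:3.5} (which yields $\alpha'_\ve-\beta_\ve=(2a+1)\eta_\ve\alpha_\ve+(a-1)\beta_\ve$) ensures that $\frac{|\alpha'_\ve-\beta_\ve|^2}{\alpha_\ve}$ stays bounded wherever $\alpha_\ve$ is bounded away from zero, which happens on any interval $[t_1,t_2]\subset(r_0,\infty)$ where $\alpha$ is strictly positive. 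Putting these pieces together gives a bound
\[
\int_{\{f_\ve(t_1)<v_\ve<f_\ve(t_2)\}}\frac{|\alpha'_{\ve}(v_\ve)-\beta_{\ve}(v_\ve)|^2}{\alpha_{\ve}(v_\ve)}|\nabla v_\ve|\le C,
\]
with $C$ independent of $\ve$, so multiplying by $\ve$ and letting $\ve\to 0$ gives zero.

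The main obstacle is the potential degeneration at the boundary $t=r_0$, where $\alpha$ may vanish and the denominator $\alpha_\ve(v_\ve)$ could be small. I would handle this by observing that since both $t_1,t_2>r_0$ and $f_\ve\to f_0$ uniformly, the region $\{f_\ve(t_1)<v_\ve<f_\ve(t_2)\}$ is contained in a set where $v_\ve$ is bounded away from $f_\ve(r_0)=0$ uniformly in $\ve$, so $\alpha_\ve(v_\ve)$ is bounded below by a positive constant independent of $\ve$. Combining the uniform bound on the integral with the convergence $F_\ve(t_i)\to F(t_i)$ from Lemma~\ref{lem:3.2} and passing $\ve\to 0$ in Proposition~\ref{lem:3.4} then yields $F(t_2)-F(t_1)\le 0$, as claimed.
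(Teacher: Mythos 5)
Your proposal is correct and takes essentially the same approach as the paper, which dispatches this corollary in one line by passing $\ve\to 0$ in the estimate of Proposition~\ref{lem:3.4} and invoking Lemma~\ref{lem:3.2}; you simply fill in the (standard but omitted) details that the error integral on the right of \eqref{asymp-monot} stays uniformly bounded as $\ve\to 0$, using the $C^{1,\beta}_{\mathrm{loc}}$ bounds on $v_\ve$, the uniform convergence of $\alpha_\ve,\beta_\ve,\eta_\ve$, and the lower bound on $\alpha_\ve$ away from $r_0$. The only loose thread, also left implicit in the paper, is the degenerate case $t_1=r_0$ with $\alpha(r_0)=0$ (which occurs when $k=1$ and $C_1=0$), where one would argue by first taking $t_1'>r_0$ regular, applying the result, and then letting $t_1'\to r_0$.
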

		This finishes the proof of monotonicity part in  Theorem \ref{thm:Monot}.	
		
		\
		
		\section{Rigidity}
		In this section, we will prove the rigidity part in Theorem \ref{thm:Monot}.
		%%%%%%
		
		First, we verify $F(t)\equiv 0$ if  $(M, g)=(\mathcal{M}_{m, r_0}^{3},   g_{m})$ is a spatial Schwarzschild manifold outside a rotationally symmetric sphere of mass $m$, given by \eqref{schwarzschild}. Recall that we have shown the following fact in Section 2,
		\begin{eqnarray*}
			&&u(x)= f_{m, r_0}(r)= 1 - \int_{r}^{\infty} \mathfrak{c}_{p} s^{-a-1}(1+\frac{m}{2s})^{-2a}ds
			=1-\frac{I_{a}(\frac{m}{2r})}{I_{a}(\frac{m}{2r_{0}})}, 	\end{eqnarray*}
		where $r=|x|$.
		Let $S_r=\{|x|=r\}=\{u(x)=f_{m, r_0}(r)\}$ be a coordinate sphere. We see from Section 2 that the mean curvature of $S_r$ is given by
		\begin{eqnarray*}
			H=\frac{2}{r}(1 + \frac{m}{2 r})^{-3}\left(1- \frac{m}{2r}\right),	\end{eqnarray*}
		and it satisfies that
		\begin{eqnarray}\label{xeq1}
			\frac{H}{2|\nabla u|_{g}}=\eta(r)= \mathfrak{c}_{p}^{-1}r^{a}\left(1+ \frac{m}{2r}\right)^{2a-1}\left(1- \frac{m}{2r}\right).
		\end{eqnarray}
		By direct computation, we get
		\begin{eqnarray}
			&&\int_{S_r} H^{2} =
			16 \pi \left(1+ \frac{m}{2r}\right)^{-2}\left(1- \frac{m}{2r}\right)^{2},\label{xeq2}\\
			&&\int_{S_r} |\nabla u|^{2} =
			4 \pi \mathfrak{c}_{p}^{2} r^{-2a}\left(1+ \frac{m}{2r}\right)^{-4a}.\label{xeq3}
		\end{eqnarray}
		By the reformation of $F(t)$ in \eqref{equ:F1}, we easily get from  \eqref{xeq1}-\eqref{xeq3} that $F(t)\equiv 0$.

		Second, we show the converse.  Assume that $F(t)$ is a constant on $\mathcal{T}$. By the proof of monotonicity,  we have that along $\S_t$, $t \in \mathcal{T}$,
		\begin{eqnarray}\label{equality-case}
			|\nabla^T|\nabla u|| = 0, \quad R_{M} = 0,\quad |\overset{\circ}{h}| = 0, \quad H = 2 \eta(u)|\nabla u|,
		\end{eqnarray}
		and
		\begin{eqnarray*}
			\int_{\Sigma_{t}} K   = 4 \pi.
		\end{eqnarray*}
		Hence $|\nabla u|$ is a constant on $\Sigma_{t}$, $\Sigma_{t}$ is totally umbilical and it is a topological $2$-sphere.
		Also, we see from \eqref{equality-case} and \eqref{equ: H} that \begin{eqnarray}\label{equality-case-eq2}
			\nabla |\nabla u| = - \frac{1}{p-1} H \nabla u = - (a+1)\eta(u)|\nabla u| \nabla u.
		\end{eqnarray}

		Denote by $$\tilde{\eta}(t) = t^{a+1}\left(1+\frac{m}{2t}\right)^{2a+2}.$$ Abuse of notation, we denote $\tilde{\eta}(u)=\tilde{\eta}(f^{-1}(u))$, where $f^{-1}$ is the inverse function of $f$. We compute that
		\begin{eqnarray*}
			&&\nabla \left( \ln \left( |\nabla u| \tilde{\eta}(u) \right) \right)
			=\frac{\nabla |\nabla u|}{|\nabla u|} + \frac{\tilde{\eta}'(t) \nabla u}{f'(t) \tilde{\eta}(u)}
			= \frac{\nabla |\nabla u|}{|\nabla u|} + (a+1) \eta(u) \nabla u
			= 0.
		\end{eqnarray*}
		In the last equality we used \eqref{equality-case-eq2}.
		Thus $|\nabla u| \tilde{\eta}(u)$ is a constant on $\mathcal{T}$.
		Recall from \eqref{equ:EstimateNablau} that $|\nabla u| = \mathfrak{c}_{p}r^{-a-1} \left( 1+O(r^{-\tilde{\tau}}) \right)$. It follows that
		\begin{eqnarray}\label{equality-case-eq5}
			|\nabla u| =\frac{\mathfrak{c}_{p}}{\tilde{\eta}(t)}=\mathfrak{c}_{p} t^{-a-1}\left(1+\frac{m}{2t}\right)^{-2a-2} \hbox{ along }\S_t=\{u=f(t)\}.
		\end{eqnarray}
		Therefore,  up to isometry, $M = [r_{0},+ \infty) \times \p M$ %, every slice $f(t) \times \partial M$ is the level set $u = f(t)$ and
		with its metric $g$ given by
		\begin{eqnarray}\label{metric-form}
			g = \frac{(\tilde{\eta}(t)f'(t))^2}{\mathfrak{c}_{p}^2} dt \otimes dt
			+ g_{\alpha \beta}(t, \vartheta) d \vartheta^{\alpha} \otimes d \vartheta^{\beta},
		\end{eqnarray}
		where $\{ \vartheta^{\alpha} \}$ is local chart of $\p M$ and $g_{\alpha \beta}(t, \vartheta) d \vartheta^{\alpha} \otimes d \vartheta^{\beta}$ represents the metric on $\S_t$  induced by $g$.
		From \eqref{metric-form}, we see that the second fundamental form of $\S_t$ is given by
		\begin{eqnarray}\label{equality-case-eq3}
			h_{\alpha \beta}
			= - \frac{1}{2}|\nabla u| \frac{\partial}{\partial u}g_{\alpha \beta}.
		\end{eqnarray}
		Note from \eqref{equality-case} that \begin{eqnarray}\label{equality-case-eq4}
			h_{\alpha \beta} = \frac{H}{2} g_{\alpha \beta}=\eta(u)|\n u|g_{\alpha \beta}.
		\end{eqnarray} It follows from \eqref{equality-case-eq3} and \eqref{equality-case-eq4} that	$$
		\frac{\partial g_{\alpha \beta}}{\partial u} = - 2 \eta (u) g_{\alpha \beta}.
		$$
		In other words,
		\[
		\frac{\partial g_{\alpha \beta}}{\partial t}
		= - 2 \eta (t) f'(t) g_{\alpha \beta}
		= - 2 t^{-1}\left(1+\frac{m}{2t}\right)^{-1}\left(1-\frac{m}{2t}\right) g_{\alpha \beta}.
		\]
		It follows that
		\begin{eqnarray}\label{equality-case-eq12}
			g_{\alpha \beta}(t, \vartheta) = t^{2}\left(1+\frac{m}{2t}\right)^{4} c_{\alpha \beta} (\vartheta).
		\end{eqnarray}
		for some metric $(c_{\alpha \beta})$ on $\p M$.
		Next, we determine $(c_{\alpha \beta})$ to be the round metric on $\mathbb{S}^2$. To achieve this,
		we compute the Gauss curvature $K_{\S_t}$ of $\S_t$.
		The Gauss equation tells that\begin{eqnarray}\label{gauss-eq}
			2 K_{\S_t} = R_{M} - 2 \operatorname{Ric}_M(\nu, \nu) + H^2 - |h|^2.
		\end{eqnarray}
		We have already known that along $\S_t$, \begin{eqnarray}
			&&H = 2 \eta(u)|\nabla u| = 2 t^{-1}\left(1+\frac{m}{2t}\right)^{-3}\left(1-\frac{m}{2t}\right), \label{equality-case-eq8}\\
			&&|h|^2 = \frac{H^2}{2} = 2 t^{-2}\left(1+\frac{m}{2t}\right)^{-6}\left(1-\frac{m}{2t}\right)^2, \label{equality-case-eq9}\\
			&&R_{M} = 0. \label{equality-case-eq10}
		\end{eqnarray}
		So we only need to calculate $\operatorname{Ric}_M(\nu, \nu)$.
		Using the evolution equation  \eqref{equ: dH_t} of $H$, we have
		\begin{eqnarray}\label{equality-case-eq6}
			\frac{\p}{\p t} H
			&=&- f'(t) \left( \Delta_{\Sigma_t} \left( \frac{1}{|\nabla u|} \right)
			+ \left\{ |h|^2 + \operatorname{Ric}_M (\nu, \nu) \right)\frac{1}{|\nabla u|} \right\} \nonumber
			\\&=& - \left(1+\frac{m}{2t}\right)^2 \left( |h|^2 + \operatorname{Ric}_M (\nu, \nu) \right),
		\end{eqnarray}
		where we use the fact that $|\nabla u|$ is a constant given by \eqref{equality-case-eq5} along $\S_t$.
		On the other hand,
		\begin{eqnarray}\label{equality-case-eq7}
			\frac{\p}{\p t} H&=& \frac{d}{d t} \left(  2 t^{-1}(1+\frac{m}{2t})^{-3}\left(1+\frac{m}{2t}\right)  \right) \nonumber
			\\&=& - 2 t^{-2}\left(1+\frac{m}{2t}\right)^{-4} \left( 1-4\frac{m}{2t}+(\frac{m}{2t})^2 \right).
		\end{eqnarray}
		From \eqref{equality-case-eq6} and \eqref{equality-case-eq7}, we get\begin{eqnarray}\label{equality-case-eq11}
			\operatorname{Ric}_M (\nu, \nu) = - 4 t^{-2}\left(1+\frac{m}{2t}\right)^{-6}\frac{m}{2t}.
		\end{eqnarray}
		Finally, it follows from \eqref{gauss-eq}, \eqref{equality-case-eq8}, \eqref{equality-case-eq9}, \eqref{equality-case-eq10} and \eqref{equality-case-eq11} that
		\begin{eqnarray}\label{equality-case-eq13}
			K_{\S_t} = t^{-2}\left(1+\frac{m}{2t}\right)^{-4}.
		\end{eqnarray}
		In particular, $\partial M$ has constant positive Gauss curvature. Moreover,
		in view of \eqref{equality-case-eq12} and \eqref{equality-case-eq13}, we see that  $(c_{\alpha \beta})$ is the round metric on $\mathbb{S}^2$.
		We conclude that
		$$g=\left(1+\frac{m}{2t}\right)^4 \left(dt \otimes dt
		+ t^{2}g_{\mathbb{S}^{2}}\right)
		$$
		which is exactly the Schwarzschild metric of mass $m$. It is clear from \eqref{equality-case-eq4} that $\p M$ is a rotationally symmetric sphere.
		
		We finish the proof of rigidity part in Theorem \ref{thm:Monot} and thus complete the proof of Theorem \ref{thm:Monot}.
		
		\
		
		\section{Asymptotic behavior}
		Recall that
		\begin{equation*}
			F(t)= 4 \pi \gamma(t) + \alpha(t) \int_{\Sigma_t} H |\nabla u| + \beta(t) \int_{\Sigma_t} |\nabla u|^2,
		\end{equation*}
		where 	\begin{equation*}
			\begin{aligned}
				\alpha(t)
				=& t\left(1+\frac{m}{2t}\right)^{2}
				\left\{ \left( C_{2}\mathfrak{c}_{p}+C_{1}\frac{a}{m} \frac{I_{a}(\frac{m}{2t})}{I_{a}(\frac{m}{2r_0})}  \right)
				\eta(t)-C_{1}\frac{1}{m} \right\},
			\end{aligned}
		\end{equation*}
		\begin{equation*}
			\begin{aligned}
				\beta(t)
				=&- \eta(t)\alpha(t)
				+ \left( C_{2}\mathfrak{c}_{p}\frac{m}{a}+C_{1} \frac{I_{a}(\frac{m}{2t})}{I_{a}(\frac{m}{2r_0})} \right) \mathfrak{c}_{p}^{-2} t^{2a}\left( 1+\frac{m}{2t}\right)^{4a},
			\end{aligned}
		\end{equation*}
		\begin{equation*}
			\begin{aligned}
				\gamma(t)
				=&- \mathfrak{c}_{p}^{2} t^{-2a}\left( 1+\frac{m}{2t}\right)^{-4a} \eta(t) \alpha(t)
				-  \left( C_{2}\mathfrak{c}_{p}\frac{m}{a}+C_{1} \frac{I_{a}(\frac{m}{2t})}{I_{a}(\frac{m}{2r_0})} \right).
			\end{aligned}
		\end{equation*}
		Notice that $$\eta(t)=\mathfrak{c}_{p}^{-1}t^{a}\left(1 + \frac{m}{2 t}\right)^{2a-1}\left(1- \frac{m}{2t}\right)>0$$ as long as $m\neq0$ and $t>r_{0}\ge \frac{|m|}{2}$. We may rewrite $F(t)$ as follows:
		\begin{eqnarray}\label{equ:F1}
			F(t)
			&=
			& - \left( C_{2}\mathfrak{c}_{p}\frac{m}{a}+C_{1}\frac{I_{a}(\frac{m}{2t})}{I_{a}(\frac{m}{2r_{0}})} \right)
			\left(
			4 \pi - \mathfrak{c}_{p}^{-2} t^{2a}(1+\frac{m}{2t})^{4a} \int_{\Sigma_{t}} |\nabla u|^2
			\right) \\
			&& - \frac{\alpha(t)}{4\eta(t)} \int_{\Sigma_{t}} \left( H-2\eta(t)|\nabla u| \right) ^2
			- \frac{1}{4} \frac{\alpha(t)}{\eta(t)}
			\left( 16\pi - \int_{\Sigma_{t}} H^2 \right) \nonumber
			\\&&-4\pi \frac{\alpha(t)}{\eta(t)}\left( \frac{(1-\frac{m}{2t})^{2}}{(1+\frac{m}{2t})^{2}}-1 \right). \nonumber
		\end{eqnarray}
		To estimate the limit of $F(t)$ as $t\rightarrow+\infty$, we need to know the limit of some relevant quantities.
		%%%%%%
		\begin{Lemma}
			\begin{eqnarray}
				&&\lim_{t\to\infty} \left( \frac{\alpha(t)}{\eta(t)}-C_{2}\mathfrak{c}_{p}t(1 + \frac{m}{2t})^{2} \right) =0, \label{equ:EstimateAlphaDevidedByEta}\\
				&&\lim_{t\to\infty} 4\pi \frac{\alpha(t)}{\eta(t)}\left( \frac{(1-\frac{m}{2t})^{2}}{(1+\frac{m}{2t})^{2}}-1 \right)
				= -8 \pi C_{2}\mathfrak{c}_{p}m.\label{xeq-limit6}
			\end{eqnarray}
		\end{Lemma}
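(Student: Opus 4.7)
The plan is to reduce both limits to a direct asymptotic expansion as $t \to \infty$, with the only nontrivial point being the use of the defining relation \eqref{equ:m} between $\mathfrak{c}_p$, $m$, and $r_0$ to detect a leading-order cancellation. Starting from the explicit formula for $\alpha(t)$ in \eqref{ode-solution1}, a direct substitution yields
\[
\frac{\alpha(t)}{\eta(t)} - C_{2}\mathfrak{c}_{p}\,t\left(1+\frac{m}{2t}\right)^{2}
= \frac{C_{1}}{m}\, t\left(1+\frac{m}{2t}\right)^{2}\left\{ \frac{a\, I_{a}(\frac{m}{2t})}{I_{a}(\frac{m}{2r_{0}})} - \frac{1}{\eta(t)} \right\},
\]
so proving \eqref{equ:EstimateAlphaDevidedByEta} reduces to showing that the braced quantity decays faster than $1/t$.

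The next step is a Taylor expansion. Expanding $(1 + \operatorname{sgn}(k)s)^{-2a}$ around $s=0$ inside the defining integral for $I_a$ and taking $s = \frac{m}{2t}$, one gets $I_a\bigl(\frac{m}{2t}\bigr) = \frac{1}{a}\bigl(\frac{|m|}{2t}\bigr)^a\bigl(1+O(1/t)\bigr)$. Similarly, from \eqref{equ:eta} one obtains $\eta(t) = \mathfrak{c}_p^{-1} t^a\bigl(1+O(1/t)\bigr)$. Consequently,
\[
\frac{a\, I_{a}(\frac{m}{2t})}{I_{a}(\frac{m}{2r_{0}})}
= \frac{(|m|/2)^{a}}{I_{a}(\frac{m}{2r_{0}})}\, t^{-a}\bigl(1 + O(1/t)\bigr),
\qquad
\frac{1}{\eta(t)} = \mathfrak{c}_{p}\, t^{-a}\bigl(1 + O(1/t)\bigr).
\]
The defining relation \eqref{equ:m}, namely $(|m|/2)^a = \mathfrak{c}_p\, I_a\bigl(\frac{m}{2r_0}\bigr)$, forces the two leading coefficients to coincide, and the $\mathfrak{c}_p t^{-a}$ terms cancel exactly. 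The braced expression in the first display is therefore $O(t^{-a-1})$, and multiplying by $t(1+m/(2t))^2 = O(t)$ yields $O(t^{-a})$, which tends to $0$ since $a = (3-p)/(p-1) > 0$ for $p \in (1,3)$. This establishes \eqref{equ:EstimateAlphaDevidedByEta}.

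For \eqref{xeq-limit6}, the elementary identity
\[
\frac{(1-m/(2t))^{2}}{(1+m/(2t))^{2}} - 1 = \frac{-2m/t}{(1+m/(2t))^{2}}
\]
rewrites the left-hand side as $-8\pi m \cdot \dfrac{\alpha(t)}{t\,(1+m/(2t))^{2}\,\eta(t)}$. Invoking \eqref{equ:EstimateAlphaDevidedByEta} just proved, this last ratio tends to $C_2 \mathfrak{c}_p$, yielding the claimed limit $-8\pi C_2 \mathfrak{c}_p m$. The only genuinely nontrivial ingredient in the whole argument is the leading-order cancellation from \eqref{equ:m}, which is a built-in compatibility between the parameters and the Schwarzschild model; the rest is bookkeeping.
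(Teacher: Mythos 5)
Your argument is correct and reaches the same place as the paper by a slightly different, more elementary route. Both start from the same decomposition: after subtracting the $C_{2}$ term, what remains is (up to the factor $C_{1}/m$) the combination
\[
m\,\xi(t)\cdot t\Bigl(1+\tfrac{m}{2t}\Bigr)^{2},\qquad
\xi(t):=\frac{a}{m}\,\frac{I_{a}(\tfrac{m}{2t})}{I_{a}(\tfrac{m}{2r_{0}})}-\frac{1}{m\,\eta(t)},
\]
and the second limit then follows from the first together with the algebraic identity $\tfrac{(1-m/(2t))^{2}}{(1+m/(2t))^{2}}-1=-\tfrac{2m/t}{(1+m/(2t))^{2}}$. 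Where you diverge from the paper is in controlling $\xi(t)$. The paper rewrites $\xi(t)$ as a single integral
$\mathfrak{c}_{p}\int_{t}^{\infty}s^{-a-2}(1+\tfrac{m}{2s})^{-2a}(1-\tfrac{m}{2s})^{-2}\,ds$
(this uses the relation \eqref{equ:m} to identify $I_{a}(\tfrac{m}{2t})/I_{a}(\tfrac{m}{2r_{0}})$ with $1-f(t)$, plus an integration by parts) and then applies L'H\^opital to get the sharp rate $\xi(t)\sim\tfrac{\mathfrak{c}_{p}}{a+1}t^{-a-1}$. You instead Taylor-expand $I_{a}(\tfrac{m}{2t})$ and $1/\eta(t)$ separately, observe that the two leading coefficients are both equal to $\mathfrak{c}_{p}$ precisely because $(|m|/2)^{a}=\mathfrak{c}_{p}I_{a}(\tfrac{m}{2r_{0}})$ from \eqref{equ:m}, and conclude $\xi(t)=O(t^{-a-1})$ without determining its exact leading constant. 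Since only the rate matters (the factor $t(1+m/(2t))^{2}$ contributes one power of $t$, and $a>0$ for $p\in(1,3)$), both approaches close the argument; yours avoids the integration by parts and L'H\^opital at the cost of losing the exact constant, which is never used anyway.
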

		%%%%%%
		%%%%%%
		\begin{proof}
			%Since $I_{a}(0)=0$, we have $I_{a}(\frac{m}{2t}) \rightarrow 0$, as $t\rightarrow+\infty$.
			We see easily that as $t\rightarrow+\infty$,
			\begin{eqnarray}
				&&I_{a}(\frac{m}{2t}) \rightarrow I_{a}(0)=0,\nonumber\\
				&& (\eta(t))^{-1}= \mathfrak{c}_{p} t^{-a} \left(1 + \frac{m}{2t}\right)^{-2a+1} \left(1- \frac{m}{2t}\right)^{-1}=\mathfrak{c}_{p} t^{-a} (1+O(t^{-1}))
				\rightarrow 0.\label{eta-limit}
			\end{eqnarray}
			Consider the function
			$$
			\begin{aligned}
				\xi(t)
				=& \frac{a}{m} \frac{I_{a}(\frac{m}{2t})}{I_{a}(\frac{m}{2r_{0}})}-\frac{1}{m} (\eta(t))^{-1} \\
				=& \frac{a}{m}\mathfrak{c}_{p} \int_{t}^{+\infty}s^{-a-1}\left(1+\frac{m}{2s}\right)^{-2a}ds
				-\frac{1}{m} \mathfrak{c}_{p} t^{-a} \left(1 + \frac{m}{2t}\right)^{-2a+1} \left(1- \frac{m}{2t}\right)^{-1} \\
				=& \mathfrak{c}_{p} \int_{t}^{+\infty}s^{-a-2}\left(1+\frac{m}{2s}\right)^{-2a}\left(1-\frac{m}{2s}\right)^{-2}ds.
			\end{aligned}
			$$
			When $t\rightarrow+\infty$, $\lim\limits_{t\rightarrow+\infty}\xi(t)=0$,
			by L'Hospital formula,
			\begin{equation}\label{equ:LimitofXi}
				\lim\limits_{t\rightarrow+\infty} \frac{\xi(t)}{t^{-a-1}}=\mathfrak{c}_{p}\frac{1}{a+1}.
			\end{equation}
			Notice that
			$$
			\frac{\alpha(t)}{\eta(t)}
			=t\left(1+\frac{m}{2t}\right))^{2} \left( C_{2}\mathfrak{c}_{p}+C_{1}\xi(t) \right),
			$$
			then
			$$
			\lim\limits_{t\rightarrow+\infty} \left( \frac{\alpha(t}{\eta(t)}-C_{2}\mathfrak{c}_{p}t(1 + \frac{m}{2t})^{2} \right) = \lim\limits_{t\rightarrow+\infty} t(1+\frac{m}{2t})^{2} C_{1}\xi(t) = 0.
			$$
			Since
			\begin{eqnarray*}
				\frac{\left(1-\frac{m}{2t}\right))^{2}}{\left(1+\frac{m}{2t}\right))^{2}}-1=-\frac{2m}{t}(1+o(1)), \hbox{ as }t\to\infty,
			\end{eqnarray*}
			we get the second assertion.
		\end{proof}
		%%%%%%%
		
		%%%%%%
		Similar to \cite[Lemma 2.1 and Lemma 2.2]{M1}, we have the following.
		\begin{Lemma}
			%Under the assumptions of Theorem \ref{Thm:1.01},
			Assume that $f(t)$ is a regular value of $u$. Let $0 < \tilde{\tau} < \min \{ \tau,1 \}$. Then, along $\Sigma_{t} = \{ u = f(t) \}$, we have  that as $t \rightarrow + \infty$,
			\begin{eqnarray}
				&&|\Sigma_{t}| = 4 \pi t^{2} \left( 1 + O(t^{- \tilde{\tau}}) \right),\label{equ:Area}
				\\&&\int_{\Sigma_t} H |\nabla u| = 8 \pi \mathfrak{c}_{p} t^{- a} \left( 1 + O(t^{-  \tilde{\tau}}) \right),\label{equ:IntegrateHNablau}	
				\\&&\int_{\Sigma_t} |\nabla u|^{2} = 4 \pi \mathfrak{c}_{p}^{2} t^{- 2 a} \left( 1 + O(t^{-  \tilde{\tau}}) \right). \label{equ:IntegrateNablauSquare}
			\end{eqnarray}
		\end{Lemma}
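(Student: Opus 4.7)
The plan is to reduce each of the three estimates to asymptotic computations in the Euclidean end, using the $C^2$ expansion \eqref{equ:asymptotic u} for $u$ together with the decay $g_{ij}=\delta_{ij}+O_2(|x|^{-\tau})$. The first step is to pin down where the level set $\Sigma_t$ sits. From \eqref{equ:f} and an expansion of $(1+\tfrac{m}{2s})^{-2a}=1+O(s^{-1})$, I would show that
$$
f(t)=1-\frac{\mathfrak{c}_p}{a}t^{-a}+O(t^{-a-1}).
$$
Combining with $u=1-\tfrac{\mathfrak{c}_p}{a}r^{-a}+O_2(r^{-a-\tilde\tau})$ and equating $u=f(t)$, one obtains on $\Sigma_t$ the radial comparison $r=t\bigl(1+O(t^{-\tilde\tau})\bigr)$. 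Using the inverse function theorem together with the gradient bound $|\bar\nabla u|=\mathfrak{c}_p r^{-a-1}+O(r^{-a-1-\tilde\tau})$ (so $\bar\nabla u\neq 0$ for $r$ large), one can write $\Sigma_t$ as a Euclidean radial graph $r=\rho_t(\theta)=t+O(t^{1-\tilde\tau})$ over the round sphere $S^2$, with analogous control on $\bar\nabla\rho_t$ coming from the $O_2$ part of \eqref{equ:asymptotic u}.

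For \eqref{equ:Area}, I would compute the area with respect to the Euclidean metric first. Since $\Sigma_t$ is the graph described above, $|\Sigma_t|_{\bar g}=\int_{S^2}\rho_t^2\bigl(1+O(t^{-\tilde\tau})\bigr)\,d\sigma_{S^2}=4\pi t^2\bigl(1+O(t^{-\tilde\tau})\bigr)$. Passing from $\bar g$ to $g$ only produces an additional multiplicative factor $1+O(t^{-\tau})$, giving \eqref{equ:Area}. For \eqref{equ:IntegrateNablauSquare}, the $O_2$ expansion of $u$ gives
$$
|\nabla u|_g^2=g^{ij}\partial_iu\,\partial_ju=\mathfrak{c}_p^2 r^{-2a-2}\bigl(1+O(r^{-\tilde\tau})\bigr),
$$
where the error absorbs both the $O_1(r^{-a-1-\tilde\tau})$ correction in $\partial u$ and the $O(r^{-\tau})$ correction in $g^{ij}$. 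On $\Sigma_t$, $r=t(1+O(t^{-\tilde\tau}))$, so $|\nabla u|_g^2$ is pointwise $\mathfrak{c}_p^2 t^{-2a-2}\bigl(1+O(t^{-\tilde\tau})\bigr)$; multiplying by the area from \eqref{equ:Area} yields the claim.

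For \eqref{equ:IntegrateHNablau}, the only new ingredient is an estimate of the mean curvature. Using the graph formula for the mean curvature of $\Sigma_t$ in the Euclidean metric, the principal term is $2/\rho_t=2/t+O(t^{-1-\tilde\tau})$; the conformal/asymptotically flat correction for going from $\bar g$ to $g$ is $O(|x|^{-1-\tau})$ since it involves one derivative of $\sigma_{ij}$. Therefore $H=\tfrac{2}{t}\bigl(1+O(t^{-\tilde\tau})\bigr)$ on $\Sigma_t$, which combined with the pointwise asymptotic for $|\nabla u|_g$ above gives $H|\nabla u|_g=2\mathfrak{c}_p t^{-a-2}\bigl(1+O(t^{-\tilde\tau})\bigr)$; integration against \eqref{equ:Area} then produces \eqref{equ:IntegrateHNablau}.

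The main technical obstacle is the mean curvature estimate, because it requires controlling one derivative of the (possibly irregular) graph function $\rho_t$ and of the perturbation $\sigma_{ij}$ simultaneously; this is where the $O_2$ assumption in \eqref{equ:asymptotic u} and the definition of asymptotic flatness through \eqref{af} (which bounds $\partial\sigma=O(|x|^{-1-\tau})$) are essential. Once the graph representation of $\Sigma_t$ and the pointwise asymptotics of $|\nabla u|_g$ and $H$ are in hand, the three integral estimates all reduce to integrating the principal term against the round area form on $S^2$ and absorbing errors of order $O(t^{-\tilde\tau})$.
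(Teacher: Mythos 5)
Your proof is correct and reaches the same three conclusions, but you route the area estimate differently from the paper, and that difference is worth noting. For \eqref{equ:Area}, the paper avoids any graph computation: it first observes from the Hessian estimate that $H = \tfrac{2}{r}(1+O(r^{-\tilde\tau}))>0$ for $t$ large, so $\Sigma_t$ and the coordinate spheres $S_r$ are all area outer-minimizing (their exteriors are foliated by mean-convex surfaces). It then sets $r_{-}(t)=\min_{\Sigma_t}|x|$, $r_{+}(t)=\max_{\Sigma_t}|x|$, uses the outer-minimizing property to get $|S_{r_-(t)}|\le|\Sigma_t|\le|S_{r_+(t)}|$, and combines this with $|S_r|=4\pi r^2(1+O(r^{-\tau}))$ and $r_\pm(t)=t(1+O(t^{-\tilde\tau}))$. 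This sandwich argument requires only the \emph{radial} localization of $\Sigma_t$, not control of the angular derivatives of a graph function. Your approach instead writes $\Sigma_t$ as a radial graph $\rho_t$ over $S^2$ and computes $|\Sigma_t|_{\bar g}$ directly, which forces you to control $\bar\nabla\rho_t$ (and, for the mean curvature, $\bar\nabla^2\rho_t$) via implicit differentiation of the $O_2$ expansion — more bookkeeping, though the $O_2$ hypothesis does supply what you need. For the mean-curvature step the paper again takes a shorter path: it uses the exact level-set identity $H=-(p-1)|\nabla u|^{-1}\nabla^2u(\nu,\nu)$ (equation \eqref{equ: H}) together with the Hessian asymptotics of $u$, bypassing the Euclidean graph-curvature formula and the $\bar g\to g$ mean-curvature correction that you invoke. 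Both routes are legitimate; the paper's is softer and requires less pointwise derivative control of the level sets, while yours is self-contained and does not lean on the outer-minimizing machinery. One small imprecision to flag in your write-up: to represent $\Sigma_t$ as a radial graph via the implicit function theorem you need $\partial_r u\neq 0$ (not merely $|\bar\nabla u|\neq 0$); this is of course supplied by the same expansion, so it is only a matter of stating it correctly.
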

		%%%%%%
		%%%%%%
		\begin{proof}
			Let $r=|x|$. we know from \eqref{equ:asymptotic u} that
			\begin{equation}\label{xeq-limit1}
				u=1 - \frac{\mathfrak{c}_{p}}{a} r^{-a} \left( 1 + O_{2}(r^{- \tilde{\tau}}) \right), \hbox{ as }r\to \infty.
			\end{equation}
			for any $0 < \tilde{\tau} < \min \{ \tau,1 \}$. On the other hand,
			\begin{eqnarray}\label{xeq-limit2}
				f(t) = 1 - \int_{t}^{\infty} \mathfrak{c}_{p} s^{-a-1}(1+\frac{m}{2s})^{-2a} ds
				= 1 - \frac{\mathfrak{c}_{p}}{a} t^{-a} \left( 1 + O(t^{- 1}) \right),\hbox{ as }t\to \infty.
			\end{eqnarray}
			Since $u = f(t)$ on $\Sigma_{t}$, we see that $u \rightarrow 1$ and $r \rightarrow \infty$ as $t \rightarrow \infty$. Moreover, it follows from \eqref{xeq-limit1} and \eqref{xeq-limit2} that
			\begin{eqnarray}\label{xeq-limit3}
				r = t \left( 1 + O(t^{- \tilde{\tau}}) \right).
			\end{eqnarray}

			Furthermore, we derive from \eqref{xeq-limit1} that
			\begin{eqnarray}\label{equ:EstimateNablau}
				&&\nabla u = {\mathfrak{c}_{p}} r^{-a-1} \nabla r \left( 1 + O_{1}(r^{- \tilde{\tau}}) \right), \nonumber
				\\&&	|\nabla u|_{g} = {\mathfrak{c}_{p}} r^{-a-1} \left( 1 + O_{1}(r^{- \tilde{\tau}}) \right),
				\\&&(\n^2 u)_{ij} = {\mathfrak{c}_{p}} r^{-a-2} \left[ -(a + 2) \frac{x_i}{r} \frac{x_j}{r} + \delta_{i j} \right] \left( 1 + O(r^{- \tilde{\tau}}) \right).\nonumber
			\end{eqnarray}
			By \eqref{equ: H}, we have
			\begin{equation} \label{equ:EstimateH}
				H
				= - (p - 1) |\nabla u|^{- 1} \n^2 u \left(\frac{\nabla u}{|\nabla u|_{g}}, \frac{\nabla u}{|\nabla u|_{g}}\right)
				= \frac{2}{r} \left( 1 + O(r^{- \tilde{\tau}}) \right).
			\end{equation}
			Hence, as $t$ large enough, $H>0$ along $\S_t$. It follows that $\Sigma_{t}$ is area outer-minimizing in $(M, g)$ when $t$ is large enough, since the exterior of $\Sigma_{t}$ in $M$ is foliated by mean-convex surfaces. Similarly, each coordinate sphere $S_r$ is area outer-minimizing in $(M, g)$ when $r$ is large enough.
			Denote $r_{-}(t) = \min\limits_{\Sigma_t} |x|$ and $r_{+}(t) = \max\limits_{\Sigma_t} |x|$. It follows from the outer-minimizing property that \begin{eqnarray}\label{xeq-limit4}
				|S_{r_{-}(t)}| \le |\Sigma_t| \le |S_{r_{+}(t)}|.
			\end{eqnarray}
			By the asymptotical flatness \eqref{af}, we have \begin{eqnarray}\label{xeq-limit5}
				|S_r|=4\pi r^2\left( 1 + O(r^{- {\tau}}) \right).
			\end{eqnarray}
			The first assertion \eqref{equ:Area} follow directly from \eqref{xeq-limit3}, \eqref{xeq-limit4} and \eqref{xeq-limit5}. Finally, the assertions \eqref{equ:IntegrateHNablau} and \eqref{equ:IntegrateNablauSquare} follow  from \eqref{equ:Area}, \eqref{equ:EstimateNablau} and \eqref{equ:EstimateH}
		\end{proof}
		%%%%%%

		% Then, by \eqref{equ:IntegrateNablauSquare}, we can see that $\eqref{equ:F1}$ vanishes as $t\rightarrow+\infty$.
		%%%%%%
		\begin{Lemma}
			\begin{equation} \label{equ:LimitofF1}
				\lim\limits_{t\rightarrow+\infty}
				\left( 4\pi-\mathfrak{c}_{p}^{-2} t^{2a}(1+\frac{m}{2t})^{4a} \int_{\Sigma_{t}} |\nabla u|^2   \right)= 0.
			\end{equation}
			
		\end{Lemma}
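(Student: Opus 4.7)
The plan is to reduce this limit identity directly to the asymptotic expansion \eqref{equ:IntegrateNablauSquare} established in the preceding lemma, together with a trivial Taylor expansion of the conformal factor $(1+\tfrac{m}{2t})^{4a}$ at infinity. No new geometric input should be required.

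First I would substitute \eqref{equ:IntegrateNablauSquare} into the quantity in question, yielding
\begin{equation*}
\mathfrak{c}_{p}^{-2} t^{2a}\Bigl(1+\tfrac{m}{2t}\Bigr)^{4a} \int_{\Sigma_{t}} |\nabla u|^{2}
= 4\pi \Bigl(1+\tfrac{m}{2t}\Bigr)^{4a}\bigl(1 + O(t^{-\tilde{\tau}})\bigr).
\end{equation*}
Next I would expand $(1+\tfrac{m}{2t})^{4a} = 1 + O(t^{-1})$ as $t\to\infty$ (with the implied constant depending on $m$ and $a$, but not on $t$), so that the product becomes $4\pi\bigl(1 + O(t^{-\tilde{\tau}}) + O(t^{-1})\bigr)$. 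Subtracting $4\pi$ and letting $t\to\infty$ gives the claim.

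The main step is already provided by the previous lemma, so no obstacle is expected; the only point to keep in mind is that $\tilde{\tau}>0$, which guarantees that both error terms vanish in the limit.
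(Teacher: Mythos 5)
Your proposal is correct and is essentially the paper's proof (which simply says the lemma ``follows directly by \eqref{equ:IntegrateNablauSquare}''); you have merely written out the substitution and the trivial expansion $(1+\tfrac{m}{2t})^{4a}=1+O(t^{-1})$ explicitly.
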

		%%%%%%
		\begin{proof}It follows directly by \eqref{equ:IntegrateNablauSquare}.
		\end{proof}
		
		\begin{Lemma}
			\begin{equation} \label{equ:LimitofF2}
				\lim_{t \rightarrow \infty} \frac{\alpha(t)}{\eta(t)} \int_{\Sigma_{t}} \left( \frac{H}{2} - \eta(u) |\nabla u| \right)^{2} = 0.
			\end{equation}
		\end{Lemma}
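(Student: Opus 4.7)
The plan is to show the integrand $(H/2 - \eta(u)|\nabla u|)^2$ decays fast enough on $\Sigma_t$ to defeat the growth of $\alpha(t)/\eta(t)$. The key inputs are the asymptotic expansions \eqref{equ:asymptotic u}, \eqref{equ:EstimateNablau}, \eqref{equ:EstimateH}, the area estimate \eqref{equ:Area}, and the already-established bound \eqref{equ:EstimateAlphaDevidedByEta}. Since on $\Sigma_t$ we have $u = f(t)$, and $\eta(u)$ is understood as $\eta(f^{-1}(u))$, the quantity $\eta(u)$ is simply the constant $\eta(t)$ along $\Sigma_t$.

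First I would expand each ingredient at infinity. Choose $\tilde\tau \in (1/2, \min\{\tau,1\})$, which is possible since $\tau > 1/2$. As shown in the previous lemma, $r = |x| = t(1+O(t^{-\tilde\tau}))$ uniformly on $\Sigma_t$. Then \eqref{equ:EstimateH} gives $H/2 = r^{-1}(1+O(r^{-\tilde\tau})) = t^{-1}(1+O(t^{-\tilde\tau}))$, and \eqref{equ:EstimateNablau} gives $|\nabla u| = \mathfrak{c}_p r^{-a-1}(1+O(r^{-\tilde\tau})) = \mathfrak{c}_p t^{-a-1}(1+O(t^{-\tilde\tau}))$, both uniformly on $\Sigma_t$. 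Expanding the defining formula \eqref{equ:eta} yields $\eta(t) = \mathfrak{c}_p^{-1} t^a(1+O(t^{-1}))$, hence $\eta(t)|\nabla u| = t^{-1}(1+O(t^{-\tilde\tau}))$ uniformly on $\Sigma_t$. Subtracting gives
\begin{equation*}
\frac{H}{2} - \eta(u)|\nabla u| = O(t^{-1-\tilde\tau}) \quad \text{uniformly on } \Sigma_t.
\end{equation*}

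Squaring and integrating, using $|\Sigma_t| = 4\pi t^2(1+O(t^{-\tilde\tau}))$ from \eqref{equ:Area}, I obtain
\begin{equation*}
\int_{\Sigma_t}\left(\frac{H}{2} - \eta(u)|\nabla u|\right)^2 = O(t^{-2\tilde\tau}).
\end{equation*}
Finally, by \eqref{equ:EstimateAlphaDevidedByEta} we have $\alpha(t)/\eta(t) = C_2 \mathfrak{c}_p t(1+m/2t)^2 + o(1) = O(t)$ as $t \to \infty$ (and in the degenerate case $C_2 = 0$ the limit is even $o(1)$ by \eqref{equ:LimitofXi}, so the same bound holds). Multiplying,
\begin{equation*}
\frac{\alpha(t)}{\eta(t)} \int_{\Sigma_t}\left(\frac{H}{2} - \eta(u)|\nabla u|\right)^2 = O(t^{1-2\tilde\tau}),
\end{equation*}
which tends to $0$ as $t \to \infty$ since $\tilde\tau > 1/2$. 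No step looks genuinely hard: the only mild subtlety is ensuring that the $O(r^{-\tilde\tau})$ error terms in the coordinate expansions are uniform across $\Sigma_t$ and can be translated into errors of the same order in $t$ via $r = t(1+O(t^{-\tilde\tau}))$, but this is precisely the content of the expansions already used in the preceding lemma.
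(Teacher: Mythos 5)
Your proof is correct and follows essentially the same route as the paper: both expand $H/2$, $|\nabla u|$, and $\eta$ asymptotically to obtain $\frac{H}{2}-\eta(u)|\nabla u|=O(t^{-1-\tilde\tau})$, then combine the area estimate \eqref{equ:Area} with $\alpha(t)/\eta(t)=O(t)$ from \eqref{equ:EstimateAlphaDevidedByEta} to conclude the product is $O(t^{1-2\tilde\tau})\to 0$ for $\tilde\tau>1/2$.
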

		%%%%%%
		
		%%%%%%
		\begin{proof}
			We see from \eqref{eta-limit}, \eqref{equ:EstimateNablau}, \eqref{equ:EstimateH}, \eqref{xeq-limit3} that for $\tilde{\tau}<1$,
			%\begin{equation*}
			%	|\nabla u|_{g} = \mathfrak{c}_{p} t^{-a-1} \left( 1 + O_{1}(t^{- \tilde{\tau}}) \right).
			%\end{equation*}
			%\begin{equation*}
			%	H
			%	= \frac{2}{t} \left( 1 + O(t^{- \tilde{\tau}}) \right).
			%\end{equation*}
			\begin{eqnarray*}
				&&\frac{H}{2} - \eta(u) |\nabla u|
				=\frac{1}{t} \left( 1 + O(t^{- \tilde{\tau}}) \right)
				- \mathfrak{c}_{p}^{-1} t^{a}(1+O(t^{-1}))
				{\mathfrak{c}_{p}} t^{-a-1} \left( 1 + O_{1}(t^{- \tilde{\tau}}) \right)
				=O(t^{-1-\tilde{\tau}}).
			\end{eqnarray*}
			Combining with \eqref{equ:Area} and \eqref{equ:EstimateAlphaDevidedByEta}, we have that
			\begin{eqnarray*}
				&&\frac{\alpha(t)}{\eta(t)} \int_{\Sigma_{t}} \left( \frac{H}{2} - \eta(u) |\nabla u| \right)^{2}
				%=& \mathfrak{c}_{p} C_{2} t (1+o(1)) 4 \pi t^{2} \left( 1 + O(t^{- \tilde{\tau}}) \right) O(t^{-2-2\tilde{\tau}}) \\
				= O(t^{1 - 2\tilde{\tau}}).
			\end{eqnarray*}
			Since $\tau > \frac{1}{2}$, we may take $\frac12<\tilde{\tau}< \min\{ \tau,1 \}$ to get the assertion.
		\end{proof}
		%%%%%%
		
		%Agostiniani V. et al. \cite{AMMO} proved the following.
		%%%%%%
		%\begin{Lemma}[\cite{AMMO}, Lemma 2.5]
		%Suppose that $u$ is a solution of \eqref{p-laplace},
		%	$$
		%	\lim_{s \rightarrow \infty} \frac{s}{4}\left( 16 \pi - \int_{ \{ u=1-a^{-1}\mathfrak{c}_{p}s^{-a} \} } H^{2} d \Sigma_{s} \right) \le 8 \pi \mathfrak{m}_{ADM}
		%	$$
		%\end{Lemma}
		%%%%%%
		%Similarly, we can get the following.
		%%%%%%
		\begin{Lemma}Assume $C_2\ge 0$. Then
			%Suppose that $u$ is a solution of \eqref{p-laplace},
			\begin{equation} \label{equ:LimitofF3}
				\lim_{t \rightarrow \infty} \frac14\frac{\alpha(t)}{\eta(t)}\left( 16 \pi - \int_{ \S_t} H^{2}   \right) \le 8 \pi C_{2}\mathfrak{c}_{p} \mathfrak{m}_{ADM}.
			\end{equation}
			Furthermore, when $C_{2}=0$,
			$$
			\lim\limits_{t \rightarrow \infty} \frac14\frac{\alpha(t)}{\eta(t)}\left( 16 \pi - \int_{  \S_t} H^{2} \right) =0.
			$$
		\end{Lemma}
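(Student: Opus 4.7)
The plan is to factor $\frac{1}{4}\frac{\alpha(t)}{\eta(t)}\bigl(16\pi - \int_{\S_t} H^2\bigr)$ into a leading-order prefactor times a Hawking-mass factor, analyze each separately, then combine. From \eqref{equ:EstimateAlphaDevidedByEta}, the prefactor satisfies
\[
\frac{\alpha(t)}{\eta(t)} = C_2\mathfrak{c}_p\, t\left(1+\frac{m}{2t}\right)^2 + o(1) = C_2\mathfrak{c}_p\, t + O(1)
\]
as $t\to\infty$. Introducing the Hawking mass
\[
m_H(\S_t) := \sqrt{\frac{|\S_t|}{16\pi}}\left(1 - \frac{1}{16\pi}\int_{\S_t} H^2\right),
\]
and using the area asymptotic \eqref{equ:Area}, we obtain
\[
\frac{1}{4}\left(16\pi - \int_{\S_t} H^2\right) = \frac{4\pi\, m_H(\S_t)}{\sqrt{|\S_t|/(16\pi)}} = \frac{8\pi\, m_H(\S_t)}{t}\,(1+o(1)).
\]
Multiplying the two expansions, the claim reduces to establishing $\limsup_{t\to\infty} m_H(\S_t) \le \mathfrak{m}_{ADM}$.

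I expect this Hawking-mass upper bound for the level sets (not coordinate spheres) to be the main obstacle. The classical convergence $m_H(S_r) \to \mathfrak{m}_{ADM}$ on coordinate spheres is a direct consequence of the asymptotic flatness \eqref{af} and the formula $\mathfrak{m}_{ADM} = \lim_{r\to\infty}\frac{r}{2}\bigl(1 - \frac{1}{16\pi}\int_{S_r} H^2\bigr)$. To transfer this to the level sets $\S_t$, I would use the pointwise estimate $r = t(1+O(t^{-\tilde\tau}))$ along $\S_t$ from \eqref{xeq-limit3}, together with the refined estimates \eqref{equ:EstimateNablau} and \eqref{equ:EstimateH} for $|\nabla u|$ and $H$. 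These sandwich $\S_t$ in a narrow annulus between two coordinate spheres $S_{r_-(t)}$ and $S_{r_+(t)}$ with $r_\pm(t)/t \to 1$. Combined with the outer-minimizing property of $\S_t$ for large $t$ (a consequence of $H>0$ asymptotically, already noted in the preceding lemma), one extracts $|\S_t| = 4\pi t^2 + o(t)$ and, via Taylor expansion of $H^2$ in coordinates and comparison with $\int_{S_{r_\pm(t)}} H^2$, the bound $\int_{\S_t} H^2 \ge 16\pi - 32\pi\mathfrak{m}_{ADM}/t + o(t^{-1})$. This yields $\limsup_{t\to\infty} m_H(\S_t) \le \mathfrak{m}_{ADM}$ as desired, and hence the main inequality.

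For the $C_2 = 0$ case, one sharpens the prefactor asymptotic via \eqref{equ:LimitofXi}: since $\xi(t) \sim \mathfrak{c}_p t^{-a-1}/(a+1)$, we have
\[
\frac{\alpha(t)}{\eta(t)} = C_1\, t\left(1+\frac{m}{2t}\right)^2\xi(t) = O(t^{-a}).
\]
Combined with $16\pi - \int_{\S_t} H^2 = O(t^{-1})$ from the Hawking-mass analysis above, the product is $O(t^{-a-1}) \to 0$ since $a > 0$ whenever $p \in (1,3)$, which gives the claimed limit zero.
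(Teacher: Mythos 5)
Your reduction of the main inequality to the Hawking-mass bound $\limsup_{t\to\infty} m_H(\Sigma_t) \le \mathfrak{m}_{ADM}$ is exactly the right move, and you correctly identify this as the essential lemma. However, your proposed proof of that bound has a genuine gap. The estimates you want to invoke --- \eqref{equ:Area}, \eqref{equ:EstimateNablau}, \eqref{equ:EstimateH}, and $r = t(1+O(t^{-\tilde\tau}))$ --- all carry relative errors $O(t^{-\tilde\tau})$ with $\tilde\tau < \min\{\tau,1\} < 1$. Plugging these in gives only $16\pi - \int_{\Sigma_t} H^2 = O(t^{-\tilde\tau})$ and $|\Sigma_t| = 4\pi t^2 + O(t^{2-\tilde\tau})$; in particular $|\Sigma_t| = 4\pi t^2 + o(t)$ is \emph{not} a consequence of \eqref{equ:Area} unless $\tilde\tau > 1$. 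The Hawking mass is a quantity of size $O(1)$ produced by a delicate cancellation between $16\pi$ and $\int H^2$ at the $1/t$ scale, and naive Taylor expansion with $O(t^{-\tilde\tau})$, $\tilde\tau \in (\tfrac12,1)$, errors cannot detect the coefficient of $1/t$. So the asserted estimate $\int_{\Sigma_t}H^2 \ge 16\pi - 32\pi\mathfrak{m}_{ADM}/t + o(t^{-1})$ does not follow from the tools you invoke, and the comparison with coordinate spheres via outer-minimizing does not rescue this (outer-minimizing controls areas, not $\int H^2$).

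The paper avoids this entirely by quoting \cite[Lemma 2.5]{AMMO}, which proves precisely
\[
\limsup_{s\to\infty} \frac{s}{4}\left(16\pi - \int_{\{u = 1-\frac{\mathfrak{c}_p}{a}s^{-a}\}} H^2\right) \le 8\pi\mathfrak{m}_{ADM}
\]
for the $p$-capacitary level sets, and then simply relates the parametrizations via $s = t(1+O(t^{-1}))$ and multiplies by the prefactor asymptotic \eqref{equ:EstimateAlphaDevidedByEta}. Unless you are prepared to reproduce the substance of AMMO's Lemma 2.5 (which is a nontrivial argument, not a Taylor expansion), you should cite it as the paper does. Your $C_2 = 0$ argument, by contrast, is fine: you only need $\frac{\alpha(t)}{\eta(t)} \to 0$ and $16\pi - \int_{\Sigma_t}H^2 \to 0$, both of which follow at $O(t^{-\tilde\tau})$ precision, even though the intermediate $O(t^{-1})$ rate you quoted is not justified.
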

		%%%%%%
		
		%%%%%%
		\begin{proof}
			In \cite[Lemma 2.5]{AMMO}, it was proved that
			\begin{eqnarray}\label{limit-ammo}
				\lim_{s \rightarrow \infty} \frac{s}{4}\left( 16 \pi - \int_{ \left\{ u=1-\frac{\mathfrak{c}_{p}}{a}s^{-a} \right\} } H^{2} \right) \le 8 \pi \mathfrak{m}_{ADM}.
			\end{eqnarray}
			Consider the change of variable $f(t)=1-\frac{\mathfrak{c}_{p}}{a}s^{-a}$. Then by \eqref{xeq-limit2}, we see
			\begin{eqnarray}\label{xeq-limit7}
				s=t \left( 1+O(t^{-1}) \right), \quad t\to\infty.
			\end{eqnarray}
			Thus the assertion follows from \eqref{limit-ammo}, \eqref{equ:EstimateAlphaDevidedByEta} and \eqref{xeq-limit7}.
			%	i.e. $s$ and $t$ are equivalent infinitesimal quantities.
			%	This lemma can be obtained directly by the above lemma.
			When $C_{2}=0$, recalling \eqref{equ:Area} and \eqref{equ:EstimateH}, we get
			$$
			\lim\limits_{t \rightarrow \infty} \left( 16 \pi - \int_{ \S_t} H^{2}\right) =0.
			$$
			Then the second assertion is proved by using \eqref{equ:EstimateAlphaDevidedByEta}.
		\end{proof}
		%%%%%%
		
		Combining  \eqref{xeq-limit6}, \eqref{equ:LimitofF1},  \eqref{equ:LimitofF2} and  \eqref{equ:LimitofF3}, we get the limit of $F(t)$.
		%%%%%%
		\begin{Proposition}\label{limit-F}
			Assume $C_2\ge 0$. Then
			\begin{equation} \label{equ:LimitofF}
				\begin{aligned}
					\lim\limits_{t\rightarrow+\infty} F(t)
					\ge -8 \pi C_{2}\mathfrak{c}_{p} (\mathfrak{m}_{ADM}-m).
				\end{aligned}
			\end{equation}
			Furthermore, when $C_{2}=0$,
			$$
			\lim\limits_{t\rightarrow+\infty} F(t)=0.
			$$
		\end{Proposition}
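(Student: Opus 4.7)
The plan is straightforward: use the reformulation \eqref{equ:F1} of $F(t)$, which decomposes $F(t)$ into a sum of four pieces, and apply the four asymptotic estimates already established in this section (the lemma computing $\lim 4\pi \tfrac{\alpha(t)}{\eta(t)}\bigl(\tfrac{(1-m/2t)^2}{(1+m/2t)^2}-1\bigr)$, together with \eqref{equ:LimitofF1}, \eqref{equ:LimitofF2}, \eqref{equ:LimitofF3}) to pass to the limit in each piece separately.

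I would then analyze the four terms in \eqref{equ:F1} one by one. For the first term, note that as $t\to\infty$ the factor $C_{2}\mathfrak{c}_{p}\tfrac{m}{a}+C_{1}\tfrac{I_{a}(m/2t)}{I_{a}(m/2r_{0})}$ converges to the finite constant $C_{2}\mathfrak{c}_{p}\tfrac{m}{a}$ because $I_{a}(m/2t)\to I_{a}(0)=0$, while by \eqref{equ:LimitofF1} the second factor tends to $0$; the product tends to $0$. The second term is exactly $-\tfrac{\alpha(t)}{\eta(t)}\int_{\Sigma_t}\bigl(\tfrac{H}{2}-\eta(u)|\nabla u|\bigr)^{2}$, which tends to $0$ by \eqref{equ:LimitofF2}. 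The fourth term tends to $+8\pi C_{2}\mathfrak{c}_{p}m$ by \eqref{xeq-limit6}. Finally, the third term $-\tfrac14\tfrac{\alpha(t)}{\eta(t)}(16\pi-\int_{\Sigma_t}H^{2})$, together with the upper bound \eqref{equ:LimitofF3}, yields
\begin{equation*}
\liminf_{t\to\infty}\Bigl(-\tfrac14\tfrac{\alpha(t)}{\eta(t)}(16\pi-\int_{\Sigma_t}H^{2})\Bigr)\;\ge\;-8\pi C_{2}\mathfrak{c}_{p}\mathfrak{m}_{ADM}.
\end{equation*}
Summing the four contributions gives $\liminf_{t\to\infty}F(t)\ge 0+0-8\pi C_{2}\mathfrak{c}_{p}\mathfrak{m}_{ADM}+8\pi C_{2}\mathfrak{c}_{p}m=-8\pi C_{2}\mathfrak{c}_{p}(\mathfrak{m}_{ADM}-m)$, which is \eqref{equ:LimitofF}. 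Note that to turn the four one-sided bounds into the single $\liminf$ estimate, one uses that three of the four limits exist as genuine limits, so only the (upper) bound on the $H^{2}$ term contributes the inequality.

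For the case $C_{2}=0$: the first term still tends to $0$, the second still tends to $0$, the fourth tends to $-8\pi\cdot 0\cdot\mathfrak{c}_{p}m=0$, and by the addendum in \eqref{equ:LimitofF3} the third term also tends to $0$. Thus $\lim_{t\to\infty}F(t)=0$.

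There is essentially no obstacle here beyond carefully tracking signs and constants, since all the analytic work has been isolated into the preceding lemmas. The only point that merits care is that \eqref{equ:LimitofF3} provides only a one-sided (upper) bound when $C_{2}>0$, which is exactly why the conclusion is an inequality rather than an equality in that case; once this is observed, the argument reduces to arithmetic combination of the four asymptotic statements.
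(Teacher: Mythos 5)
Your proposal is correct and takes exactly the same approach the paper indicates: the paper simply states that the proposition follows by combining the four preceding asymptotic lemmas via the reformulation of $F(t)$, and you carry out that term-by-term combination carefully, including the observation that the $H^2$ term contributes only a one-sided bound (hence the inequality) while the other three pieces have genuine limits.
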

		%%%%%%
		
		%%%%%%
		%\begin{proof}
		%By \eqref{equ:EstimateAlphaDevidedByEta},
		%$$
		%\frac{\alpha(t)}{\eta(t)}=C_{2}\mathfrak{c}_{p}t(1+o(1)).
		%$$
		%Since $- \frac{\alpha(t)}{4\eta(t)}<0$, and by \eqref{equ:LimitofF3}, we obtain
		%$$
		%\lim\limits_{t\rightarrow\infty}
		%- \frac{1}{4} \frac{\alpha(t)}{\eta(t)}
		%\left( 16\pi - \int_{\Sigma_{t}} H^2   \right)
		%\ge -C_{2}\mathfrak{c}_{p}8 \pi \mathfrak{m}_{ADM},
		%$$
		%$$
		%\lim\limits_{t\rightarrow\infty}
		%-4\pi \frac{\alpha(t)}{\eta(t)}\left( \frac{(1-\frac{m}{2t})^{2}}{(1+\frac{m}{2t})^{2}}-1 \right)
		%= C_{2}\mathfrak{c}_{p}8 \pi m.
		%$$
		%Combine with \eqref{equ:LimitofF1} and \eqref{equ:LimitofF2}, we have \eqref{equ:LimitofF}.
		%\end{proof}
		%%%%%%

		%%%%%%%%%%
		%%%%%%
		
		\
		
		\section{Applications and proof of main results}
		Combining Theorem \ref{thm:Monot} and Proposition \ref{limit-F}, we get the following
		%%%%%%
		\begin{Corollary} Let $(M, g)$ and $u$ be as in Theorem \ref{Thm:1.02}. For any $k\in(-1,0)\cup(0,1]$,
			let $m=2 \operatorname{sgn}(k) \left(I_a(k)\mathfrak{c}_{p}\right)^{\frac{1}{a}}$, $r_{0}=\frac{m}{2k}=|k|^{-1}\left(I_a(k)\mathfrak{c}_{p}\right)^{\frac{1}{a}}$. Let $\a, \b, \g$ be three one-variable functions given by \eqref{ode-solution1} with  $$
			C_{2} \ge 0, \hbox{ and }
			\left( C_{2}\mathfrak{c}_{p}+C_{1}\frac{a}{m} \right)
			\eta(r_{0}) \ge C_{1}\frac{1}{m}. 	$$	Then	we have that
			\begin{equation*}
				\begin{aligned}
					F(t)=4 \pi \gamma(t) + \alpha(t) \int_{\Sigma_{t}} H |\nabla u|   + \beta(t) \int_{\Sigma_{t}} |\nabla u|^2
					\ge  -C_{2}\mathfrak{c}_{p}8\pi(\mathfrak{m}_{ADM}-m).
				\end{aligned}
			\end{equation*}
			Moreover, equality  holds if and only if $(M, g)$ is isometric to  the spatial Schwarzschild manifold of mass $m$ outside a rotationally symmetric ball, $(\mathcal{M}_{m, r_0}^{3},  g_{m})$.	
		\end{Corollary}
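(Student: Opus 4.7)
The plan is to obtain this corollary as a direct consequence of the two preceding ingredients: the monotonicity of $F(t)$ in Theorem \ref{thm:Monot} and the asymptotic lower bound in Proposition \ref{limit-F}. No new analysis is needed; it is essentially a matter of combining the two.

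First, I would observe that the set $\mathcal{T}$ is unbounded above. Indeed, the asymptotic expansion \eqref{equ:asymptotic u} together with the gradient estimate \eqref{equ:EstimateNablau} guarantee that $|\nabla u|$ is everywhere positive on the end, so $u$ has no critical points outside some compact set. Consequently every sufficiently large value $t$ lies in $\mathcal{T}$, and we may choose a sequence $t_n \in \mathcal{T}$ with $t_n \to \infty$. Then, fixing any $t \in \mathcal{T}$, Theorem \ref{thm:Monot} gives $F(t) \ge F(t_n)$ for all $n$ large enough that $t_n \ge t$. Letting $n \to \infty$ and invoking Proposition \ref{limit-F} yields
\begin{equation*}
F(t) \;\ge\; \lim_{n\to\infty} F(t_n) \;\ge\; -8\pi\, C_2\, \mathfrak{c}_p\, (\mathfrak{m}_{ADM}-m),
\end{equation*}
which is exactly the claimed inequality.

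For the rigidity, the ``if'' direction follows immediately from the explicit computation at the start of Section 4, which shows $F(t)\equiv 0$ on $(\mathcal{M}_{m,r_0}^{3}, g_m)$, together with the fact that $\mathfrak{m}_{ADM}=m$ on the Schwarzschild manifold, so both sides of the inequality vanish. For the converse, suppose equality holds at some $t_0\in\mathcal{T}$. Then $F(t_0) = -8\pi C_2 \mathfrak{c}_p (\mathfrak{m}_{ADM}-m)$, while for every $t\in\mathcal{T}$ with $t\ge t_0$ the monotonicity forces $F(t_0)\ge F(t)\ge -8\pi C_2 \mathfrak{c}_p (\mathfrak{m}_{ADM}-m)=F(t_0)$; hence $F$ is constant on $\mathcal{T}\cap[t_0,\infty)$. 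The rigidity portion of Theorem \ref{thm:Monot} then identifies $(M,g)$ with the spatial Schwarzschild manifold of mass $m$ outside a rotationally symmetric ball.

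There is no serious obstacle here; the only point that deserves explicit verification is the existence of a sequence $t_n\in\mathcal{T}$ tending to infinity so that the passage to the limit in the monotonicity is legitimate, which is guaranteed by the asymptotic analysis of Section 5.
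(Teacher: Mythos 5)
Your proposal is correct and is precisely the paper's (one-line) approach of combining Theorem \ref{thm:Monot} with Proposition \ref{limit-F}. The only subtle point --- also left implicit in the paper --- is the rigidity step: your argument yields that $F$ is constant on $\mathcal{T}\cap[t_0,\infty)$, whereas the rigidity clause of Theorem \ref{thm:Monot} is stated for $F$ constant on all of $\mathcal{T}$; these coincide exactly when $t_0=r_0$, which is the only value at which the corollary is actually applied in Section 6, so the discrepancy is harmless in practice but worth noting.
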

		%%%%%%
		
		Next we consider the following cases, including $k=1$, $0<k<1$ and $-1<k<0$.
		
		\noindent{\bf Case 1: $k=1$.}

		In this case, $m$ and $r_{0}$ satisfies that $m=2 \left(I_a(1)\mathfrak{c}_{p}\right)^{\frac{1}{a}}$ and $r_0=\frac{m}{2}$. We see that
		%%%%%%
		\begin{eqnarray*}
			&&\eta(\frac{m}{2})=0,\quad \alpha(\frac{m}{2}) =-2C_{1},\\
			&&\beta(\frac{m}{2})
			= \left( C_{2}\mathfrak{c}_{p}\frac{m}{a}+C_{1} \right)
			2^{4a} (I_{a}(1))^{2},\\
			&&\gamma(\frac{m}{2})
			= - \left( C_{2}\mathfrak{c}_{p}\frac{m}{a}+C_{1} \right) .
		\end{eqnarray*}
		%%%%%%
		Hence we have the following consequence.
		\begin{Corollary}\label{prop:6.2}
			Let $(M, g)$ and $u$ be as in Theorem \ref{Thm:1.02}. Let $m=2 \left(I_a(1)\mathfrak{c}_{p}\right)^{\frac{1}{a}}$. Let $\a, \b, \g$ be three one-variable functions given by \eqref{ode-solution1} with $$C_2\ge 0,\quad C_1\le 0.$$
			Then we have that
			\begin{equation}\label{ineq-cor}
				\begin{aligned}
					&- 4 \pi \left( C_{2}\mathfrak{c}_{p}\frac{m}{a}+C_{1} \right)
					- 2C_{1} \int_{\Sigma} H |\nabla u|
					+ \left( C_{2}\mathfrak{c}_{p}\frac{m}{a}+C_{1} \right) 2^{4a} (I_{a}(1))^{2} \int_{\Sigma} |\nabla u|^2 \\
					\ge & -C_{2}\mathfrak{c}_{p}8\pi(\mathfrak{m}_{ADM}-m).
				\end{aligned}
			\end{equation}
			Moreover, equality sign holds if and only if $(M, g)$ is isometric to  the spatial Schwarzschild manifold of mass $m$,   $(\mathcal{M}_{m}^{3},  g_{m})$.
		\end{Corollary}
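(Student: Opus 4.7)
The plan is to view Corollary \ref{prop:6.2} as the $k=1$ specialization of the preceding corollary. The point is that when $k=1$, we have $r_{0}=m/2$ so $1-\tfrac{m}{2r_0}=0$, which collapses several factors in the formulas \eqref{ode-solution1} and makes everything explicit at the boundary. First I would simply plug $t=r_0=m/2$ into the expressions defining $\eta$, $\alpha$, $\beta$, $\gamma$: since $\eta(m/2)=\mathfrak{c}_{p}^{-1}(m/2)^{a}\cdot 2^{2a-1}\cdot 0=0$ and $\tfrac{I_{a}(m/2t)}{I_{a}(m/2r_0)}=1$ at $t=m/2$, the formulas reduce to $\alpha(m/2)=-2C_{1}$, $\gamma(m/2)=-(C_{2}\mathfrak{c}_{p}\tfrac{m}{a}+C_{1})$, and, using $(m/2)^{2a}2^{4a}=m^{2a}2^{2a}$ together with $m^{a}=2^{a}I_{a}(1)\mathfrak{c}_{p}$ (from the definition of $m$), $\beta(m/2)=(C_{2}\mathfrak{c}_{p}\tfrac{m}{a}+C_{1})\,2^{4a}(I_{a}(1))^{2}$. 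These are exactly the coefficients appearing on the left-hand side of \eqref{ineq-cor}.

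Next I would check that the hypotheses $C_{2}\ge 0$ and $C_{1}\le 0$ are precisely what \eqref{alpha>0-assumpt} requires in this limit: the second condition $(C_{2}\mathfrak{c}_{p}+C_{1}\tfrac{a}{m})\eta(r_{0})\ge C_{1}/m$ becomes $0\ge C_{1}/m$, which, since $m>0$, is equivalent to $C_{1}\le 0$. Thus $\alpha(t)\ge 0$ on $[m/2,\infty)$ by Proposition \ref{alpha>0}, so Theorem \ref{thm:Monot} applies and $F$ is monotone non-increasing. I would also note that the level set $\Sigma_{r_0}=\{u=f(r_0)\}$ coincides with $\partial M=\Sigma$, since $f(m/2)=0$; hence
\[
F(r_0)= -4\pi\bigl(C_{2}\mathfrak{c}_{p}\tfrac{m}{a}+C_{1}\bigr)
-2C_{1}\!\int_{\Sigma}\! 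H|\nabla u|
+\bigl(C_{2}\mathfrak{c}_{p}\tfrac{m}{a}+C_{1}\bigr)2^{4a}(I_{a}(1))^{2}\!\int_{\Sigma}\!|\nabla u|^{2},
\]
which is the left-hand side of \eqref{ineq-cor}.

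Finally I would combine monotonicity with the asymptotic bound: by Theorem \ref{thm:Monot}, $F(r_0)\ge \lim_{t\to\infty}F(t)$, and by Proposition \ref{limit-F} (whose hypothesis $C_{2}\ge 0$ is in force), $\lim_{t\to\infty}F(t)\ge -8\pi C_{2}\mathfrak{c}_{p}(\mathfrak{m}_{ADM}-m)$. Chaining these two inequalities gives \eqref{ineq-cor}. For the rigidity statement, equality in \eqref{ineq-cor} forces equality in the monotonicity of $F$ on $\mathcal{T}$, so by the rigidity part of Theorem \ref{thm:Monot}, $(M,g)$ must be isometric to the Schwarzschild manifold $(\mathcal{M}^{3}_{m,r_{0}},g_{m})$ with $r_{0}=m/2$; this is exactly $(\mathcal{M}^{3}_{m},g_{m})$ as claimed. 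The only subtle point is verifying the degenerate limit at $t=r_{0}=m/2$ (since $\eta$ vanishes there and the formulas for $\beta$, $\gamma$ contain a factor $\eta(t)\alpha(t)$ that needs to be seen to vanish); this is a finite one-line computation rather than a genuine obstacle, as $\alpha(m/2)=-2C_{1}$ is finite while $\eta(m/2)=0$.
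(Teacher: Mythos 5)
Your proof is correct and follows essentially the same route as the paper: the paper proves this as the $k=1$ case of the corollary immediately preceding it (which packages Theorem \ref{thm:Monot} with Proposition \ref{limit-F}), after recording that $\eta(m/2)=0$, $\alpha(m/2)=-2C_1$, $\beta(m/2)=(C_{2}\mathfrak{c}_{p}\tfrac{m}{a}+C_{1})2^{4a}(I_a(1))^2$, and $\gamma(m/2)=-(C_{2}\mathfrak{c}_{p}\tfrac{m}{a}+C_{1})$. Your additional checks---that the condition \eqref{alpha>0-assumpt} degenerates to $C_2\ge 0$, $C_1\le 0$ because $\eta(r_0)=0$, and that $\Sigma_{r_0}=\partial M$ since $f(m/2)=0$---are exactly the details the paper leaves implicit.
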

		%%%%%%
		
		\
		
		\noindent{\it Proof of Theorem \ref{Thm:1.01}.}
		
		Let $C_{2}=0,C_{1}=-1$ in \eqref{ineq-cor},  we obtain \eqref{geom-ineq-1}. Let $C_{2}=\mathfrak{c}_{p}^{-1}$ and $C_{1}=0$ in \eqref{ineq-cor}, we obtain \eqref{geom-ineq-2}. \qed%Inequality \eqref{geom-ineq-3} follows from  \eqref{geom-ineq-1} and \eqref{geom-ineq-2}.
		%This completes the proof of Theorem \ref{Thm:1.01}.
		
		%\begin{Corollary}
		%		Under the assumptions of \ref{Thm:1.01},
		%		\begin{equation*}
			%			\begin{aligned}
				%%%				- 2^{4a} (I_{a}(1))^{2} \int_{\Sigma} |\nabla u|^2 d \Sigma
				%\ge 0.
				%		\end{aligned}
			%	\end{equation*}
		%	\begin{equation*}
			%		\begin{aligned}
				%%%%	\end{aligned}
			%\end{equation*}
			%\begin{equation*}
			%\begin{aligned}
			%4 \pi a
			%- \int_{\Sigma} H|\nabla u| d \Sigma
			%\le 4\pi a \frac{\mathfrak{m}_{ADM}}{m}.
			%R\end{aligned}
		%\end{equation*}
		%And $'='$ holds if and only if $(M, g)$ is isometric to a spatial Schwarzschild manifold of mass $\mathfrak{m}_{ADM}>0$.
		%\end{Corollary}
		
		\
		
		\noindent{\it Proof of Theorem \ref{Thm:1.03}.}
		Since $H=0$ on $\S$, we see from \eqref{geom-ineq-1} and \eqref{geom-ineq-2} that
		\begin{equation*}
			\begin{aligned}
				0\le
				4 \pi
				- 2^{4a} (I_{a}(1))^{2} \int_{\Sigma} |\nabla u|^2
				\le 8\pi a \left( \frac{\mathfrak{m}_{ADM}}{m}-1 \right) .
			\end{aligned}
		\end{equation*}
		This is \eqref{geom-ineq-4}. On the other hand,
		By H\"older's inequality,
		\begin{equation*}
			\begin{aligned}
				4\pi \mathfrak{c}_{p}^{p-1}
				= \int_{\Sigma}|\nabla u|^{p-1}
				\le \left( \int_{\Sigma}|\nabla u|^{2} \right)^{\frac{p-1}{2}} |\Sigma|^{\frac{3-p}{2}}
				\le \left( \frac{4 \pi}{2^{4a} (I_{a}(1))^{2}} \right)^{\frac{p-1}{2}} |\Sigma|^{\frac{3-p}{2}} .
			\end{aligned}
		\end{equation*}
		It follows that
		\begin{equation*}
			\sqrt{\frac{|\Sigma|}{16\pi}} \ge 2 \left( I_{a}(1) \mathfrak{c}_{p} \right)^{\frac{1}{a}}.
		\end{equation*}
		This is \eqref{geom-ineq-5}.
		\qed
		%%%%%%
		
		\
		
		\noindent{\bf Case 2: $-1<k<0$ and $0<k<1$.}
		
		In this case, $m$ and $r_{0}$ satisfies that $m=2 \operatorname{sgn}(k) \left(I_a(k)\mathfrak{c}_{p}\right)^{\frac{1}{a}}$ and $r_0=\frac{m}{2k}$. We see that
		%%%%%%
		\begin{eqnarray*}
			&&\eta(r_{0})=I_{a}(k)|k|^{-a}(1+k)^{2a-1}(1-k), \\
			&&\alpha(r_{0})
			=m\frac{(1+k)^{2}}{2k}
			\left\{
			\left( C_{2}\mathfrak{c}_{p}+C_{1}\frac{a}{m} \right) \eta(r_{0}) -C_{1}\frac{1}{m}
			\right\}
			,\\
			&&\beta(r_{0})
			= -\eta(r_{0})\alpha(r_{0})
			+ \left( C_{2}\mathfrak{c}_{p}\frac{m}{a}+C_{1} \right) (I_{a}(k))^{2}|k|^{-2a}(1+k)^{4a} ,\\
			&&\gamma(r_{0})
			= -(I_{a}(k))^{-2}|k|^{2a}(1+k)^{-4a}\eta(r_0)\alpha(r_{0})
			- \left( C_{2}\mathfrak{c}_{p}\frac{m}{a}+C_{1} \right)  .
		\end{eqnarray*}
		%%%%%%
		Then we have the following consequence.
		%%%%%%
		\begin{Corollary}\label{prop:6.3}
			Let $(M, g)$ and $u$ be as in Theorem \ref{Thm:1.02}. For any $k\in(-1,0)\cup(0,1)$,
			let $m=2 \operatorname{sgn}(k) \left(I_a(k)\mathfrak{c}_{p}\right)^{\frac{1}{a}}$, $r_{0}=\frac{m}{2k}=|k|^{-1}\left(I_a(k)\mathfrak{c}_{p}\right)^{\frac{1}{a}}$. Let $\a, \b, \g$ be three one-variable functions given by \eqref{ode-solution1} with
			$$
			C_{2} \ge 0, \hbox{ and }
			\left( C_{2}\mathfrak{c}_{p}+C_{1}\frac{a}{m} \right)
			\eta(r_{0}) \ge C_{1}\frac{1}{m}. 			
			$$
			Then we have that
			\begin{equation}\label{ineq-cor2}
				\begin{aligned}
					&- 4 \pi \left\{ \frac{(1-k)^{2}}{(1+k)^{2}\eta(r_{0})}\alpha(r_{0})
					+ \left( C_{2}\mathfrak{c}_{p}\frac{m}{a}+C_{1} \right) \right\}
					+ \alpha(r_{0}) \int_{\Sigma} H |\nabla u|  \\
					&+ \left( -\eta(r_{0})\alpha(r_{0})
					+ \left( C_{2}\mathfrak{c}_{p}\frac{m}{a}+C_{1} \right) \frac{(1+k)^{2}(\eta(r_{0}))^{2}}{(1-k)^{2}} \right)  \int_{\Sigma} |\nabla u|^2  \\
					\ge & -C_{2}\mathfrak{c}_{p}8\pi(\mathfrak{m}_{ADM}-m).
				\end{aligned}
			\end{equation}
			Moreover, equality sign holds if and only if $(M, g)$ is isometric to  the spatial Schwarzschild manifold of mass $m$ outside a rotationally symmetric ball, $(\mathcal{M}_{m, r_0}^{3},  g_{m})$.
		\end{Corollary}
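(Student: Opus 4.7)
The result is essentially an evaluation at $t=r_0$ of the inequality $F(t)\ge -8\pi C_2\mathfrak{c}_p(\mathfrak{m}_{ADM}-m)$ established in the preceding Corollary (which combines Theorem~\ref{thm:Monot} and Proposition~\ref{limit-F}). Indeed, since $f(r_0)=0$, the level set $\Sigma_{r_0}=\{u=f(r_0)\}$ coincides with $\Sigma=\partial M$, so the two integrals appearing in $F(r_0)$ are precisely the boundary integrals in \eqref{ineq-cor2}. The task therefore reduces to a direct substitution into the definitions \eqref{ode-solution1} of $\alpha(r_0), \beta(r_0), \gamma(r_0)$ and a matching of the resulting expression with the left-hand side of \eqref{ineq-cor2}. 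Note that the imposed condition on $(C_1,C_2)$ is exactly hypothesis \eqref{alpha>0-assumpt}, which, via Proposition~\ref{alpha>0}, secures $\alpha(t)\ge 0$ on $[r_0,\infty)$ and hence validates the appeal to the preceding Corollary.

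For the substitution, I use the two facts $m/(2r_0)=k$ and $r_0=|k|^{-1}(I_a(k)\mathfrak{c}_p)^{1/a}$, which together yield $I_a(m/(2r_0))/I_a(k)=1$, $r_0^a=\mathfrak{c}_p I_a(k)|k|^{-a}$, and $\eta(r_0)=I_a(k)|k|^{-a}(1+k)^{2a-1}(1-k)$. Plugging into \eqref{ode-solution1} reproduces the three expressions for $\alpha(r_0),\beta(r_0),\gamma(r_0)$ listed just above \eqref{ineq-cor2}. Recasting these into the exact form displayed in \eqref{ineq-cor2} is then a bookkeeping step resting on the two identities
\[
\mathfrak{c}_p^{2}r_0^{-2a}(1+k)^{-4a}\,\eta(r_0)=\frac{(1-k)^{2}}{(1+k)^{2}\,\eta(r_0)},\qquad
\mathfrak{c}_p^{-2}r_0^{2a}(1+k)^{4a}=\frac{(1+k)^{2}\eta(r_0)^{2}}{(1-k)^{2}},
\]
both of which follow by direct substitution from the closed form of $\eta(r_0)$. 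These identities convert the coefficient structure of $\gamma(r_0)$ and the second summand of $\beta(r_0)$ into the form shown in \eqref{ineq-cor2}, so that $F(r_0)$ coincides literally with its left-hand side.

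For the rigidity, equality in \eqref{ineq-cor2} forces equality in the two-stage chain $F(r_0)\ge\lim_{t\to\infty}F(t)\ge -8\pi C_2\mathfrak{c}_p(\mathfrak{m}_{ADM}-m)$. The first equality, combined with the non-increasing character of $F$, forces $F$ to be constant on $\mathcal{T}$, and the rigidity half of Theorem~\ref{thm:Monot} then yields the isometry $(M,g)\cong(\mathcal{M}_{m,r_0}^3, g_m)$. Conversely, on that Schwarzschild model the reformulation \eqref{equ:F1} together with the computations \eqref{xeq1}--\eqref{xeq3} already performed in Section~4 gives $F\equiv 0$, while Proposition~\ref{limit-F} sharpens to equality there; thus the stated value is attained. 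The only mild technicality—not a real obstacle—is that $r_0$ may itself fail to lie in $\mathcal{T}$ when $u$ has critical points on $\Sigma$. This is handled in the standard way by choosing a sequence $t_n\searrow r_0$ with $t_n\in\mathcal{T}$ (guaranteed by Sard's theorem applied to $u$) and passing to the limit in \eqref{ineq-cor2}, using continuity of the boundary integrals that results from the $C^{1,\beta}$-regularity of $u$ up to $\partial M$.
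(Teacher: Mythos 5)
Your proof is correct and follows essentially the same route as the paper: evaluate the monotone quantity $F$ at $t=r_0$ using the explicit formulas \eqref{ode-solution1}, note $f(r_0)=0$ so $\Sigma_{r_0}=\partial M$, and invoke the preceding corollary $F(t)\ge -8\pi C_2\mathfrak{c}_p(\mathfrak{m}_{ADM}-m)$ together with the rigidity half of Theorem~\ref{thm:Monot}. Your two algebraic identities are exactly what convert the paper's intermediate expressions $\mathfrak{c}_p^{2}r_0^{-2a}(1+k)^{-4a}=(I_a(k))^{-2}|k|^{2a}(1+k)^{-4a}$ into the $\frac{(1\mp k)^2}{(1\pm k)^2}$-form appearing in \eqref{ineq-cor2}; the only remark worth adding is that your approximating sequence $t_n\searrow r_0$ is not actually needed, since $|\nabla u|>0$ on $\partial M$ by the Hopf boundary point lemma for the $p$-Laplacian, so $r_0\in\mathcal{T}$ automatically.
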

		%%%%%%
		
		\
		
		\noindent{\it Proof of Theorem \ref{Thm:1.02}.}
		Notice that for any $k \in (-1,0)\cup(0,1)$, the inequality $\frac{1-a\eta(r_{0})}{m}>0$ holds.  
		Let $C_{2}=0$ and $C_{1}=-1$ in \eqref{ineq-cor2},  we obtain \eqref{geom-ineq-10}. Let $C_{2}=\mathfrak{c}_{p}^{-1}$ and $C_{1}=\frac{m\eta(r_{0})}{1-a\eta(r_{0})}$ in \eqref{ineq-cor}, we see that $$\left( C_{2}\mathfrak{c}_{p}+C_{1}\frac{a}{m} \right)
		\eta(r_{0})-C_{1}\frac{1}{m}=0.$$ It follows that $\alpha(r_0)=0$. Then we easily obtain \eqref{geom-ineq-20}. \qed
		%Inequality \eqref{geom-ineq-30} follows from  \eqref{geom-ineq-10} and \eqref{geom-ineq-20}.
		%This completes the proof of Theorem \ref{Thm:1.02}.
		
		\
		
		\noindent{\it Proof of Theorem \ref{Thm:1.04}.} 	Let $k\in(-1,1]$ be such that
		\begin{eqnarray}\label{k}
			1 - \frac{1}{16\pi} \int_{\S} H^{2} = \frac{4k}{(1+k)^{2}}.
		\end{eqnarray}
		
		\noindent{\bf Case 1: $k\in (-1,0)\cup(0,1]$.}
		We see from \eqref{geom-ineq-10} and \eqref{geom-ineq-20} that
		\begin{equation}\label{xeq-100}
			\begin{aligned}
				& \frac{(1+k)^{2}r_{0}}{\eta(r_{0})}
				\left\{
				4 \pi \frac{(1-k)^{2}}{(1+k)^{2}} - \frac{1}{4} \int_{\Sigma} H^{2}
				+ \int_{\Sigma} \left( \frac{H}{2}-\eta(r_{0})|\nabla u| \right) ^{2}
				\right\}
				\le 8\pi\left( \mathfrak{m}_{ADM}-m \right).
			\end{aligned}
		\end{equation}
		Substituting  $k$ given in \eqref{k} into \eqref{xeq-100}, we have
		\begin{equation*}
			\begin{aligned}
				&0\le \frac{(1+k)^{2}r_{0}}{\eta(r_{0})} \int_{\Sigma} \left( \frac{H}{2}-\eta(r_{0})|\nabla u| \right) ^{2}
				\le 8\pi\left( \mathfrak{m}_{ADM}-m \right). 
			\end{aligned}
		\end{equation*}
		This gives \eqref{geom-ineq-4}.
		
		It follow from \eqref{geom-ineq-10} that
		\begin{equation*}
			4 \pi \geq\frac{(1+k)^{2}}{(1-k)^{2}} (\eta(r_{0}))^{2} \int_{\Sigma} |\nabla u|^2.
		\end{equation*}
		Then, by H\"older's inequality,
		\begin{equation*}
			\begin{aligned}
				4\pi \mathfrak{c}_{p}^{p-1}
				= \int_{\Sigma}|\nabla u|^{p-1}
				\le \left( \int_{\Sigma}|\nabla u|^{2} \right)^{\frac{p-1}{2}} |\Sigma|^{\frac{3-p}{2}}
				\le \left( \frac{4 \pi}{\mathfrak{c}_{p}^{-2}r_{0}^{2a}(1+k)^{4a}} \right)^{\frac{p-1}{2}} |\Sigma|^{\frac{3-p}{2}} .
			\end{aligned}
		\end{equation*}
		
		Thus
		$$
		\sqrt{\frac{|\Sigma|}{16\pi}}\geq(1+k)^{2}r_{0}=  \frac{(1+k)^2}{2|k|}(I_a(k)\mathfrak{c}_p)^{\frac1a}.
		$$
		The equality holds if and only if $M$ is isometric to $(\mathcal{M}_{m}^{3},  g_{m})$.
		
		\noindent{\bf Case 2: $k=0$.} In this case, by using L'Hospital formula, let $k\rightarrow0$, we can see that $m\rightarrow0$,  $r_{0}\rightarrow(\frac{\mathfrak{c}_p}{a})^{\frac1a}$ and $\frac{1-a\eta(r_{0})}{2k}\rightarrow\frac{a^{2}}{a+1}$. 
		Also, in view of Remark \ref{rmk1.3} (iii),  we may recover the procedure along this paper and we get
		$$
		\mathfrak{m}_{ADM}\ge0, \quad
		\sqrt{\frac{|\Sigma|}{16\pi}}\geq\frac{1}{2}(\frac{\mathfrak{c}_p}{a})^{\frac1a}.
		$$
		The equality holds if and only if $M$ is isometric to $\mathbb{R}^3\setminus B_{r_{0}}$.
		
		\qed
		%%%%%%

		%%%%%%%%%%%%%%%%%%%%%%%%%%%%%%%%%%%%%%%%%%%%%%%%%%%%%%%%%%%%%%%%%%%%%%%%%%%%%%%%
		
		\appendix
		
		\section{Solve the ODE equations}
		%%%%%%%%%%%%%%%%%%%	
		%\section{Appendix 1: Solve the ODE equations}
		In this Appendix, we solve the ordinary differential equations in $[r_0, +\infty)$:
		\begin{equation*}
			\begin{cases}
				\alpha'(t) - (2 a + 1) \eta(t) f'(t) \alpha(t) - a f'(t) \beta(t) = 0, \\
				\beta'(t) + (2 a + 1) (\eta(t))^2 f'(t) \alpha(t) = 0, \\
				\gamma'(t) + f'(t) \alpha(t) = 0,
			\end{cases}
		\end{equation*}
		where
		\begin{eqnarray*}
			f (t)&=& 1 - \int_{t}^{\infty} \mathfrak{c}_{p} s^{-a-1}\left(1+\frac{m}{2s}\right)^{-2a}ds, \\
			\eta(t)&=& \mathfrak{c}_{p}^{-1}t^{a}\left(1 + \frac{m}{2 t}\right)^{2a-1}\left(1- \frac{m}{2t}\right),\\
		\end{eqnarray*}
		and $m$ and $\mathfrak{c}_{p}$	are two constants satisfying $m\neq0$, and $|m| = 2 \left(I_{a}(\frac{m}{2r_0})\mathfrak{c}_{p}\right)^{\frac{1}{a}}.$

		\begin{Remark} \label{Rem:Beta}
			When $r_{0}=\frac{m}{2}$, $I_{a}(\frac{m}{2r_0})=I_{a}(1)$ is indeed  half of the Beta function $\mathcal{B}(a,a)$. Recall that
			\begin{equation} \label{equ:Beta}
				\mathcal{B}(p,q) = \int_{0}^{1} t^{p-1}(1-t)^{q-1} dt.
			\end{equation}
			If we take $t = \frac{1}{1+x}$, then
			$$
			\mathcal{B}(p,q) = \int_{0}^{+ \infty} x^{q-1}(1+x)^{-(p+q)} dx.
			$$
			While for $I_{a}(1) = \int_{0}^{1} s^{a-1}(s+1)^{-2a} ds$, if we take
			$t = \frac{1}{s}$, then $I = \int_{1}^{+\infty} t^{a-1}(1+t)^{-2a} dt$,
			so
			$$
			I_{a}(1) = \frac{1}{2} \int_{0}^{+\infty} s^{a-1}(s+1)^{-2a} ds
			= \frac{1}{2} \mathcal{B}(a,a).
			$$
		\end{Remark}

		\begin{Proposition}\label{ode-solution}
			The solution to \eqref{equ: differential equations} is given by the following:
			\begin{equation*}
				\begin{aligned}
					\alpha(t)
					=& t\left(1+\frac{m}{2t}\right)^{2}
					\left\{ \left( C_{2}\mathfrak{c}_{p}+C_{1}\frac{a}{m}(1-f(t)) \right)
					\eta(t)-C_{1}\frac{1}{m} \right\},
				\end{aligned}
			\end{equation*}
			\begin{equation*}
				\begin{aligned}
					\beta(t)
					=&- \eta(t)\alpha(t)
					+ \left( C_{2}\mathfrak{c}_{p}\frac{m}{a}+C_{1}(1-f(t)) \right) \mathfrak{c}_{p}^{-2} t^{2a}\left( 1+\frac{m}{2t}\right)^{4a},
				\end{aligned}
			\end{equation*}
			\begin{equation*}
				\begin{aligned}
					\gamma(t)
					=&- \mathfrak{c}_{p}^{2} t^{-2a}\left( 1+\frac{m}{2t}\right)^{-4a} \eta(t) \alpha(t)
					-  \left( C_{2}\mathfrak{c}_{p}\frac{m}{a}+C_{1}(1-f(t)) \right)+C_3,
				\end{aligned}
			\end{equation*}
			where $C_1, C_2, C_3\in \mathbb{R}$.
			%Here $I_a: [0,\infty)\to \mathbb{R}$  is given by
			%\begin{equation*}
			%	I_{a}(k) = \int_{0}^{k} s^{a-1}(s+1)^{-2a} ds.
			%\end{equation*}
			%where
			%\begin{eqnarray*}
			%	I_{a}(k) &=& \int_{0}^{k} s^{a-1}(s+1)^{-2a} ds, \\
			%	\eta(t) &=& \mathfrak{c}_{p}^{-1}t^{a}(1 + \frac{m}{2 t})^{2a-1}(1- \frac{m}{2t}).
			%\end{eqnarray*}
			%	solves the ordinary differential equations \ref{equ: differential equations}.
			If $r_{0}=\frac{m}{2}$, $m>0$, then
			$$
			\begin{cases}
				\eta(r_{0})=0, \\
				\alpha(r_{0})=-2C_{1}, \\
				\beta(r_{0})= \left( C_{2}\mathfrak{c}_{p}\frac{m}{a}+C_{1} \right)  2^{4a}(I_{a}(1))^{2}, \\
				\gamma(r_{0})= -\left( C_{2}\mathfrak{c}_{p}\frac{m}{a}+C_{1} \right)+C_3.
			\end{cases}
			$$
		\end{Proposition}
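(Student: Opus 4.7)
The plan is to verify the formulas by direct substitution. The system consists of three linear first-order ODEs in three unknowns, so its solution space is three-dimensional; the proposed formulas contain the three free parameters $C_1, C_2, C_3$, so once we check they solve the system we are done by uniqueness. I would organize the verification around a single key algebraic identity.

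First, I record auxiliary identities. Writing $w(t) := 1 + m/(2t)$ (so that $2-w = 1 - m/(2t)$ and $w'(t) = -m/(2t^2)$), the definition of $f$ gives $f'(t) = \mathfrak{c}_p t^{-a-1} w^{-2a}$, whence $\eta(t) f'(t) = t^{-1} w^{-1}(2-w)$. Together with $m/t = 2(w-1)$, standard product-rule computations yield $(t w^2)' = w(2-w)$ and $(t^{2a} w^{4a})' = 2a t^{2a-1} w^{4a-1}(2-w)$. Expanding $\eta' = \bigl(\mathfrak{c}_p^{-1} t^a w^{2a-1}(2-w)\bigr)'$ as a three-term product and collecting terms, one obtains after cancellation the crucial identity
\begin{equation*}
\eta'(t) - a\,\eta(t)^2 f'(t) \;=\; m\,\mathfrak{c}_p^{-1}\, t^{a-2}\, w^{2a-2}.
\end{equation*}

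Second, I reduce the coupled pair $(\alpha,\beta)$. The third ODE $\gamma'(t) = -f'(t)\alpha(t)$ is a pure quadrature, accounting for the additive constant $C_3$. For the first two, introduce the combination $u(t) := \beta(t) + \eta(t)\alpha(t)$; a straightforward manipulation gives
\begin{equation*}
u'(t) \;=\; \bigl(\eta'(t) - a\eta(t)^2 f'(t)\bigr)\alpha(t) + a\eta(t) f'(t)\, u(t).
\end{equation*}
Writing $A(t) := C_2 \mathfrak{c}_p + C_1 (a/m)(1-f(t))$, the proposed formulas read $\alpha(t) = tw^2\bigl[A(t)\eta(t) - C_1/m\bigr]$ and $u(t) = (m/a) A(t)\,\mathfrak{c}_p^{-2} t^{2a} w^{4a}$. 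Differentiating $u$ via the product-rule identity for $(t^{2a}w^{4a})'$ and using $A'(t) = -C_1(a/m) f'(t)$, and differentiating $\alpha$ via $(tw^2)' = w(2-w)$ and $A'$, both the $u$-equation and the original $\alpha$-ODE collapse to the identity of the previous paragraph. The $\beta$-ODE is then automatic from $\beta = u - \eta\alpha$.

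Third, the $\gamma$-ODE is checked by differentiating the claimed expression $\gamma(t) = -\mathfrak{c}_p^2 t^{-2a} w^{-4a} \eta(t)\alpha(t) - (m/a) A(t) + C_3$ and invoking the already-verified formula for $\alpha'$; everything collapses to $-f'(t)\alpha(t)$. Finally, the boundary values in the case $r_0 = m/2$ are immediate: at this point $w = 2$ and $2 - w = 0$, so $\eta(r_0) = 0$, and the displayed values of $\alpha(r_0), \beta(r_0), \gamma(r_0)$ follow by substitution. The only real obstacle is the algebraic identity in the first paragraph, which requires a careful tally of terms generated by the Leibniz rule applied to the three-factor expression for $\eta$; once this is in hand, the remaining steps are routine bookkeeping.
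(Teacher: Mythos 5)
Your proposal is correct and follows a genuinely different route from the paper. The paper \emph{derives} the formulas: it eliminates $\beta$ to obtain a second-order linear ODE for $\alpha$ (equation \eqref{equ:ODEofAlpha}), guesses a particular solution $\bar\alpha(t)=t^2-(m/2)^2$ after an exponential substitution, reduces order to a first-order ODE for $\tilde\alpha'$, integrates, and then back-substitutes to recover $\beta$ and $\gamma$. You instead \emph{verify} the stated formulas by direct substitution, leaning on the linearity of the system: since \eqref{equ: differential equations} is a first-order linear system in three unknowns with continuous coefficients on $[r_0,\infty)$, its solution space is three-dimensional, so exhibiting a three-parameter family of solutions suffices. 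Your organizing device — the auxiliary combination $u=\beta+\eta\alpha$, which decouples the $(\alpha,\beta)$ block and reduces both ODEs to the single identity $\eta'-a\eta^2 f'=m\mathfrak{c}_p^{-1}t^{a-2}w^{2a-2}$ — is clean and I have checked that the identity is correct (expand $\eta'$ with $w'=-m/(2t^2)$ and $m/t=2(w-1)$; the bracket collapses to $a(w-2)^2+2(w-1)$, and subtracting $a\eta^2 f'$ cancels the $a(w-2)^2$ term). Each approach has its virtues: the paper's method explains \emph{how} one finds the formulas, while yours is shorter once the answer is in hand and avoids second-order ODE manipulations entirely.

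One small point to tighten: the dimension-counting argument requires that the three-parameter family actually spans the solution space, i.e., that the solutions obtained by setting $(C_1,C_2,C_3)=(1,0,0),(0,1,0),(0,0,1)$ are linearly independent. This is immediate — the $C_3$-solution is $(\alpha,\beta,\gamma)=(0,0,1)$, and the $C_1$- and $C_2$-solutions can be distinguished at $t=r_0$ (compare $\alpha(r_0)$ and the leading behavior of $\alpha$ as $t\to\infty$), or simply note that $(C_1,C_2,C_3)\mapsto(\alpha(r_0),\beta(r_0),\gamma(r_0))$ is injective — but you should say so explicitly rather than asserting uniqueness outright.
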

		
		%From the first equation, $\beta(t)$ can be expressed by $\alpha(t)$ and its derivative $\alpha'(t)$. Then the derivative of $\beta(t)$ can be expressed by $\alpha(t)$. Combine with
		\begin{proof}From the first two equations, we get an ordinary differential equation of second order:
			\begin{equation} \label{equ:ODEofAlpha}
				t^{2}\left(1 + \frac{m}{2t}\right)^{2} \alpha''(t) + t\left(1 + \frac{m}{2t}\right)\left( - a + (a+2)\frac{m}{2t} \right) \alpha'(t) - 2(2 a + 1) \frac{m}{2t}  \alpha(t) =0
			\end{equation}
			Assume $$\alpha(t) = t^{a - 1}\left(1+ \frac{m}{2 t}\right)^{2 a} \bar{\alpha}(t).$$ Then $\bar{\alpha}(t)$ satisfies
			\begin{equation} \label{equ:ODEofAlpha-1}
				0 = t\left(t + \frac{m}{2}\right)^{2} \bar{\alpha}''(t) + \left(t + \frac{m}{2}\right)\left( (a - 2) t - a \frac{m}{2} \right) \bar{\alpha}'(t) + 2(1 - a) t \bar{\alpha}(t).
			\end{equation}
			%Notice that the coefficients of the second derivative are cubic polynomials, the coefficients of the first derivative are second-order polynomials, and the coefficients of the function are first-degree polynomials.
			We observe that $\bar{\alpha}(t) = t^{2} - (\frac{m}{2})^2$ is a special quadratic polynomial solution to \eqref{equ:ODEofAlpha-1},
			It follows that $$\alpha(t)=  t^{a + 1}\left(1+ \frac{m}{2 t}\right)^{2 a+1}\left(1 - \frac{m}{2t}\right)$$ is a special solution to  \eqref{equ:ODEofAlpha}.
			
			On the other hand, assume $$\alpha(t) = t^{a + 1}\left(1 + \frac{m}{2 t}\right)^{2 a + 1}\left(1 - \frac{m}{2t}\right) \tilde{\alpha}(t).$$ Then $\tilde{\alpha}(t)$ satisfies
			\begin{equation} \label{equ: tilde alpha differential equation}
				t \left(t + \frac{m}{2}\right) \left(t - \frac{m}{2}\right) \tilde{\alpha}''(t) + \left( a(\frac{m}{2})^2 + 2(1 - a)\frac{m}{2} t + (a + 2)t^2 \right) \tilde{\alpha}'(t) = 0.
			\end{equation}
			%i.e. $\tilde{\alpha}'(t)=0$ or
			%$$
			%\frac{\tilde{\alpha}''(t)}{\tilde{\alpha}'(t)} = - 2 (t - \frac{m}{2})^{-1} + a t^{- 1} - 2 a (t + \frac{m}{2})^{- 1}.
			%$$
			We obtain
			$$
			\tilde{\alpha}'(t) = C_{1}t^{-a-2} \left(1 + \frac{m}{2 t}\right)^{-2 a}\left(1 - \frac{m}{2t}\right)^{-2},
			$$
			where $C_{1}\in\mathbb{R}$ is a constant.
			Since $\tilde{\alpha}'(t)\ge 0$ and $\tilde{\alpha}'(t) = O(t^{-a-2}) (t\rightarrow+\infty)$, we see that $$C_2:=\lim\limits_{t\rightarrow+\infty}\tilde{\alpha}(t)<+\infty.$$
			%We also notice that
			%$$
			%\begin{aligned}
			%	\frac{d}{dt}\left( -\frac{1}{m}t^{-a}(1+\frac{m}{2t})^{-2a+1}(1-\frac{m}{2t})^{-1} \right) = t^{-a-2}(1+\frac{m}{2t})^{-2a}(1-\frac{m}{2t})^{-2}
			%	+\frac{a}{m}t^{-a-1}(1+\frac{m}{2t})^{-2a}. \\
			%\end{aligned}
			%$$
			We get that
			\begin{eqnarray*}
				\tilde{\alpha}(t)
				&=& C_{2}-C_{1}\int_{t}^{+\infty}s^{-a-2}\left(1+\frac{m}{2s}\right)^{-2a}\left(1-\frac{m}{2s}\right)^{-2}ds.
			\end{eqnarray*}
			Using integration by parts, we see
			$$
			\begin{aligned}
				\tilde{\alpha}(t)
				=& C_{2}
				+C_{1}\frac{1}{m} s^{-a}\left(1+\frac{m}{2s}\right)^{-2a+1}\left(1-\frac{m}{2s}\right)^{-1} \Big|_{t}^{+\infty}
				+C_{1}\frac{a}{m} \int_{t}^{+\infty}s^{-a-1}\left(1+\frac{m}{2s}\right)^{-2a}ds \\
				=& C_{2}
				-C_{1}\frac{1}{m} t^{-a}\left(1+\frac{m}{2t}\right)^{-2a+1}\left(1-\frac{m}{2t}\right)^{-1}
				+C_{1}\frac{a}{m} (\frac{m}{2})^{-a}I_{a}(\frac{m}{2t}),
			\end{aligned}
			$$
			Therefore, the  solution to \eqref{equ:ODEofAlpha} is given by
			\begin{equation*}
				\begin{aligned}
					\alpha(t)
					=& \left\{ C_{2}+C_{1}\frac{a}{m} (\frac{m}{2})^{-a}I_{a}(\frac{m}{2t}) \right\}
					t^{a + 1}(1 + \frac{m}{2 t})^{2a+1}(1 - \frac{m}{2t})
					-C_{1}\frac{1}{m}t(1+\frac{m}{2t})^{2} \\
					=& t\left(1+\frac{m}{2t}\right)^{2}
					\left\{ \left( C_{2}\mathfrak{c}_{p}+C_{1}\frac{a}{m} \frac{I_{a}(\frac{m}{2t})}{I_{a}(\frac{m}{2r_0})}  \right)
					\eta(t)-C_{1}\frac{1}{m} \right\}.
				\end{aligned}
			\end{equation*}
			In the second equality, we have used the definition of $\eta(t)$.
			
			To calculate $\beta$, we need the derivative of $\alpha$. By computation, we get
			\begin{equation*}
				\begin{aligned}
					\alpha'(t)
					%=& (1+\frac{m}{2t})(1-\frac{m}{2t})
					%\left( \left( C_{2}\mathfrak{c}_{p}+C_{1}\frac{a}{m} (\frac{m}{2})^{-a}I_{a}(\frac{m}{2t}) \right)
					%\eta(t)-C_{1}\frac{1}{m} \right) \\
					%&- t(1+\frac{m}{2t})^{2}
					%C_{1}\frac{a}{m}\mathfrak{c}_{p} t^{-a-1}(1+\frac{m}{2t})^{-2a} \mathfrak{c}_{p}^{-1}t^{a}(1 + \frac{m}{2 t})^{2a-1}(1- \frac{m}{2t}) \\
					%&+ t(1+\frac{m}{2t})^{2}
					%\left( C_{2}\mathfrak{c}_{p}+C_{1}\frac{a}{m} (\frac{m}{2})^{-a}I_{a}(\frac{m}{2t})  \right)
					%\mathfrak{c}_{p}^{-1}t^{a-1}(1 + \frac{m}{2 t})^{2a-2}(a+(2-2a)\frac{m}{2t}+a(\frac{m}{2t})^{2}) \\
					=& (a+1) (1+\frac{m}{2t})(1-\frac{m}{2t})
					\left\{ \left( C_{2}\mathfrak{c}_{p}+C_{1}\frac{a}{m}\frac{I_{a}(\frac{m}{2t})}{I_{a}(\frac{m}{2r_0}}  \right)
					\eta(t)-C_{1}\frac{1}{m} \right\} \\
					&+ m
					\left( C_{2}\mathfrak{c}_{p}+C_{1}\frac{a}{m} \frac{I_{a}(\frac{m}{2t})}{I_{a}(\frac{m}{2r_0})}\right)
					\mathfrak{c}_{p}^{-1}t^{a-1}(1 + \frac{m}{2 t})^{2a} \\
					=& (a+1) \eta(t)f'(t) \alpha(t)
					+ \left( C_{2}\mathfrak{c}_{p}+C_{1}\frac{a}{m} \frac{I_{a}(\frac{m}{2t})}{I_{a}(\frac{m}{2r_0})}\right) m
					\mathfrak{c}_{p}^{-2}t^{2a}(1 + \frac{m}{2 t})^{4a}f'(t) \\
				\end{aligned}
			\end{equation*}
			It follows from the ODE \eqref{equ: differential equations} that
			\begin{equation*}
				\begin{aligned}
					\beta(t)
					=&\frac{1}{a f'(t)}\alpha'(t) -\frac{2 a + 1}{a} \eta(t) \alpha(t)\\
					=& - \eta(t)\alpha(t)
					+ \left( C_{2}\mathfrak{c}_{p}\frac{m}{a}+C_{1} \frac{I_{a}(\frac{m}{2t})}{I_{a}(\frac{m}{2r_0})} \right) \mathfrak{c}_{p}^{-2} t^{2a}\left( 1+\frac{m}{2t}\right)^{4a}.
				\end{aligned}
			\end{equation*}
			Finally, using the ODE \eqref{equ: differential equations} again, we notice that
			\begin{equation*}
				\begin{aligned}
					&\frac{d}{dt} \left( - \mathfrak{c}_{p}^{2} t^{-2a}\left(1+\frac{m}{2t}\right)^{-4a} (\beta(t)+2\eta(t)\alpha(t)) \right)  \\
					%=&- \mathfrak{c}_{p}^{2} t^{-2a}(1+\frac{m}{2t})^{-4a}
					%\left( \beta'(t)+2\eta'(t)\alpha(t)+2\eta(t)\alpha'(t) \right) \\
					%&+2a \mathfrak{c}_{p}^{2} t^{-2a}(1+\frac{m}{2t})^{-4a} (1-\frac{m}{2t})
					%\left( \beta(t)+2\eta(t)\alpha(t) \right) \\
					=&- \mathfrak{c}_{p}^{2} t^{-2a}\left(1+\frac{m}{2t}\right)^{-4a}  \left\{ \beta'(t)+2\eta'(t)\alpha(t)+2\eta(t)\alpha'(t)
					-2a\eta(t)f'(t)(\beta(t)+2\eta(t)\alpha(t)) \right\}\\
					%=&- \mathfrak{c}_{p}^{2} t^{-2a}(1+\frac{m}{2t})^{-4a} \left( 2\eta'(t)-(2a-1)(\eta(t))^{2}f'(t) \right) \alpha(t) \\
					%=&- \mathfrak{c}_{p}^{2} t^{-2a}(1+\frac{m}{2t})^{-4a} \mathfrak{c}_{p}^{-1}t^{a-1}(1+\frac{m}{2t})^{2a} \alpha(t) \\
					=&- f'(t) \alpha(t).\\
				\end{aligned}
			\end{equation*}
			It follows that
			\begin{equation*}
				\begin{aligned}
					\gamma(t)
					=&- \mathfrak{c}_{p}^{2} t^{-2a}\left(1+\frac{m}{2t}\right)^{-4a} (\beta(t)+2\eta(t)\alpha(t))+C_{3}\\
					=&- \mathfrak{c}_{p}^{2} t^{-2a}\left(1+\frac{m}{2t}\right)^{-4a} \eta(t) \alpha(t)
					- \left( C_{2}\mathfrak{c}_{p}\frac{m}{a}+C_{1} \frac{I_{a}(\frac{m}{2t})}{I_{a}(\frac{m}{2r_0})} \right) +C_3,
				\end{aligned}
			\end{equation*}
			where $C_3\in\mathbb{R}$ is a constant.
		\end{proof}
		
		\begin{Remark}
			The functions $\alpha(t)$, $\beta(t)$, $\gamma(t)$ are still the solutions of the ordinary differential equations when $m<0$. 
		\end{Remark}
		
		{
			\begin{Proposition}\label{alpha>0}
				$\alpha(t)\ge 0$ on $[r_{0},+\infty)$ if and only if
				$$
				C_{2} \ge 0, \hbox{ and }
				\left( C_{2}\mathfrak{c}_{p}+C_{1}\frac{a}{m} \right)
				\eta(r_{0}) \ge C_{1}\frac{1}{m}. 			$$
				In particular, if $m>0$, $r_0=\frac{m}{2}$, then
				$\alpha(t)\ge 0$ on $[r_{0},+\infty)$ if and only if $C_2\ge 0$ and $C_1\le 0$.
			\end{Proposition}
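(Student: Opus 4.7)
My plan is to first factor $\alpha(t)=P(t)\,G(t)$, where $P(t):=t(1+\frac{m}{2t})^{2}$ is strictly positive on $[r_{0},\infty)$ because $|k|<1$, and
$$G(t):=\Bigl(C_{2}\mathfrak{c}_{p}+C_{1}\tfrac{a}{m}\tfrac{I_{a}(m/(2t))}{I_{a}(k)}\Bigr)\eta(t)-\tfrac{C_{1}}{m}.$$
Then $\alpha$ and $G$ have the same sign on $[r_{0},\infty)$, and evaluating at $t=r_{0}$ gives $\alpha(r_{0})\ge 0$ iff \eqref{alpha>0-assumpt} holds, which establishes the necessity of that condition. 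The special case $r_{0}=m/2$ (i.e.\ $k=1$) gives $\eta(r_{0})=0$, so \eqref{alpha>0-assumpt} collapses to $C_{1}/m\le 0$, i.e.\ $C_{1}\le 0$ for $m>0$.

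For the necessity of $C_{2}\ge 0$, using $\mathfrak{c}_{p}I_{a}(k)=(|m|/2)^{a}$ and the Taylor expansion of $I_{a}$ near $0$, a direct computation yields
$$\alpha(t)=C_{2}\,t^{a+1}\bigl(1+o(1)\bigr)-\frac{C_{1}}{a+1}+o(1)\quad\text{as }t\to\infty.$$
If $C_{2}<0$, then $\alpha(t)\to-\infty$, violating nonnegativity. This same expansion also identifies the finite nonzero asymptote $-1/(a+1)$ of the basis solution $\alpha^{(1)}$ defined below, which will be crucial later.

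For sufficiency I would decompose $\alpha=C_{2}\alpha^{(2)}+C_{1}\alpha^{(1)}$, where
$$\alpha^{(2)}(t):=t^{a+1}(1+\tfrac{m}{2t})^{2a+1}(1-\tfrac{m}{2t}),\qquad \alpha^{(1)}(t):=t(1+\tfrac{m}{2t})^{2}\Bigl[\tfrac{a}{m}\tfrac{I_{a}(m/(2t))}{I_{a}(k)}\eta(t)-\tfrac{1}{m}\Bigr]$$
are the two linearly independent solutions of \eqref{equ:ODEofAlpha} identified in Proposition \ref{ode-solution}. Clearly $\alpha^{(2)}>0$ on $[r_{0},\infty)$. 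The crucial technical step is that $\alpha^{(1)}<0$ throughout $[r_{0},\infty)$ for both signs of $m$, which I would prove by a Wronskian argument. By Abel's theorem the Wronskian $W:=\alpha^{(1)}(\alpha^{(2)})'-\alpha^{(2)}(\alpha^{(1)})'$ has constant sign, and the asymptotics above together with $\alpha^{(2)}\sim t^{a+1}$ and $(\alpha^{(1)})'=O(t^{-2})$ give $W(t)\sim -t^{a}\to-\infty$, so $W<0$ on $[r_{0},\infty)$. If $\alpha^{(1)}$ had a zero, let $t^{\ast}$ be the largest such; since $\alpha^{(1)}(\infty)=-1/(a+1)<0$, we have $\alpha^{(1)}<0$ on $(t^{\ast},\infty)$, so $(\alpha^{(1)})'(t^{\ast})\le 0$, forcing $W(t^{\ast})=-\alpha^{(2)}(t^{\ast})(\alpha^{(1)})'(t^{\ast})\ge 0$, contradicting $W<0$. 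Hence $\alpha^{(1)}<0$ throughout.

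I would then close in two subcases. If $C_{1}\le 0$ and $C_{2}\ge 0$, then both $C_{2}\alpha^{(2)}\ge 0$ and $C_{1}\alpha^{(1)}\ge 0$, so $\alpha\ge 0$. If $C_{1}>0$, condition \eqref{alpha>0-assumpt} forces $C_{2}>0$ (otherwise it reduces to $C_{1}(a\eta(r_{0})-1)/m\ge 0$, which fails in both sign regimes of $m$ by direct checking of $a\eta(r_{0})$). The ratio $\rho(t):=\alpha^{(2)}(t)/(-\alpha^{(1)}(t))>0$ satisfies $\rho'(t)=-W(t)/(\alpha^{(1)}(t))^{2}>0$, so $\rho$ is strictly increasing on $[r_{0},\infty)$; since \eqref{alpha>0-assumpt} is precisely the inequality $\rho(r_{0})\ge C_{1}/C_{2}$, we obtain $\rho(t)\ge C_{1}/C_{2}$ for all $t\ge r_{0}$, equivalently $\alpha(t)\ge 0$. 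The main obstacle is pinning down the asymptotics of $\alpha^{(1)}$ to sufficient accuracy, namely the identification of the finite limit $-1/(a+1)$ at infinity together with the next-order correction needed for $(\alpha^{(1)})'=O(t^{-2})$; once those are in hand, the sign of $W$ at infinity, and thus on the whole interval, follows cleanly from Abel's theorem.
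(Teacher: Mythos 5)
Your proof is correct, and the sufficiency argument is genuinely different from the paper's. The paper proves sufficiency by two separate explicit computations: for $r_{0}>|m|/2$ it uses the integral formula $\xi(t)=-\mathfrak{c}_{p}\int_{t}^{\infty}s^{-a-2}(1+\tfrac{m}{2s})^{-2a}(1-\tfrac{m}{2s})^{-2}ds$ to get monotonicity and sign of $C_{2}\mathfrak{c}_{p}+C_{1}\xi(t)$ at both endpoints, while for $r_{0}=m/2$ (where $\eta(r_{0})=0$ makes that factorisation degenerate) it carries out a separate, more involved computation of the derivative of $\alpha(t)/t^{a+1}(1+\tfrac{m}{2t})^{2a+1}$. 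You instead decompose $\alpha=C_{2}\alpha^{(2)}+C_{1}\alpha^{(1)}$, establish $\alpha^{(1)}<0$ by an Abel/Wronskian argument, and then close both the $C_{1}\le 0$ case and the $C_{1}>0$ case at once via the monotone-ratio $\rho=\alpha^{(2)}/(-\alpha^{(1)})$, whose monotonicity is just the sign of $W$. This is cleaner and does not require treating $r_{0}=m/2$ separately, since $\alpha^{(1)}(r_{0})=-2<0$ and $\alpha^{(2)}(r_{0})=0$ make $\rho(r_{0})=0$ there, automatically excluding $C_{1}>0$. Note that your $\rho$ is exactly the reciprocal of (a constant times) the paper's $\xi$, so the two arguments are at heart the same fact; what you gain is uniformity and a proof of the sign and monotonicity of $\xi$ that does not depend on the integration-by-parts identity (which, incidentally, carries a sign typo in the paper's statement of $\xi$ in Section~5 and of $C_{2}\mathfrak{c}_{p}-C_{1}\xi(t)$ in the appendix; your claim $\alpha^{(1)}<0$ is consistent with the corrected sign).

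Two small remarks. First, the step ``which fails in both sign regimes of $m$ by direct checking of $a\eta(r_{0})$'' is where you appeal to $(1-a\eta(r_{0}))/m>0$, a nontrivial inequality that the paper asserts without proof in Section~6. You do not actually need it: with $C_{2}=0$ and $C_{1}>0$, the second condition in \eqref{alpha>0-assumpt} is literally $\alpha(r_{0})\ge 0$, i.e.\ $C_{1}\alpha^{(1)}(r_{0})\ge 0$, which fails immediately since you already know $\alpha^{(1)}(r_{0})<0$; invoking this makes the argument self-contained. Second, for the Abel step one should observe that the leading coefficient $t^{2}(1+\tfrac{m}{2t})^{2}$ of \eqref{equ:ODEofAlpha} is nonvanishing on $[r_{0},\infty)$ for all admissible $k$ (including the boundary case $k=1$, where $\mu(r_{0})=1$ but $1+\mu=2$), so the ODE is regular there and the Wronskian has constant sign on the full closed interval, which your argument then pins down from the asymptotics at infinity.
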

		}
		
		%%%%%%
		
		\begin{proof}
			First we consider the necessary conditions of $\alpha(t) \ge 0, \forall t \in [r_{0},+\infty)$.
			
			$(i).$ $\alpha(r_{0})\ge 0$, i.e. $\left( C_{2}\mathfrak{c}_{p}+C_{1}\frac{a}{m} \right)
			\eta(r_{0}) \ge C_{1}\frac{1}{m}$.
			%Recall that $\eta(r_{0})=0$, then $-C_{1}\frac{1}{m}\ge0$, i.e. $C_{1}\le0$.
			
			$(ii).$ $\lim\limits_{t\to\infty} \alpha(t)\ge 0$. Notice that $t(1+\frac{m}{2t})^{2}\eta(t)>0$ as if $t>r_{0}$.
			Then $\alpha(t)$ and $\frac{\alpha(t)}{t(1+\frac{m}{2t})^{2}\eta(t)}= \left( C_{2}\mathfrak{c}_{p}-C_{1}\xi(t) \right)$ has the same sign on $(r_{0},+\infty)$. Here
			$$
			\xi(t)=\mathfrak{c}_{p} \int_{t}^{+\infty}s^{-a-2}\left(1+\frac{m}{2s}\right)^{-2a}\left(1-\frac{m}{2s}\right)^{-2}ds.
			$$
			Recall \eqref{equ:LimitofXi},
			$\lim\limits_{t\to\infty} \left( C_{2}\mathfrak{c}_{p}-C_{1}\xi(t) \right) = C_{2}\mathfrak{c}_{p}$.
			i.e. $\lim\limits_{t\to\infty} \alpha(t)\ge 0$ is equivalent to $C_{2} \ge 0$.
			
			Now we prove that the conditions are sufficient. \\
			$(1).$ When $r_{0}>\frac{|m|}{2}$, $\eta(t)>0$ on $[r_{0},+\infty)$,
			$$
			\xi(t)=\frac{C_{1}\frac{a}{m} \frac{I_{a}(\frac{m}{2t})}{I_{a}(\frac{m}{2r_0})} \eta(t) - C_{1}\frac{1}{m}}{\eta(t)}
			$$
			
			It's easy to see that $C_{2}\mathfrak{c}_{p}-C_{1}\xi(t)$ is monotone on $[r_{0},+\infty)$, and by the conditions, $C_{2}\mathfrak{c}_{p}-C_{1}\xi(t)\ge0$ when $t=r_{0}$ and when $t\rightarrow+\infty$. Thus when the conditions hold, we have that $\alpha(t)\ge0$.
			
			\noindent $(2).$ When $r_{0}=\frac{m}{2}$, the conditions become $C_{2}\ge0,C_{1}\le0$.
			Notice that
			$$
			\begin{aligned}
				& \frac{d}{dt} \frac{\alpha(t)}{t^{a+1}(1+\frac{m}{2t})^{2a+1}} \\
				=& \frac{d}{dt}
				\left( C_{2}(1-\frac{m}{2t})-C_{1}\frac{1}{m}t^{-a}(1+\frac{m}{2t})^{-2a+1}+C_{1}\frac{a}{m}(1-\frac{m}{2t})\int_{t}^{+\infty}s^{-a-1}(1+\frac{m}{2t})^{-2a}ds \right) \\
				=& \frac{m}{2t^{2}} \mathfrak{c}_{p}^{-1}
				\left( C_{2}\mathfrak{c}_{p}+C_{1}\frac{a}{m}\frac{I_{a}(\frac{m}{2t})}{I_{a}(\frac{m}{2r_0})}+C_{1}\frac{1}{m}t^{-a-2}(1+\frac{m}{2t})^{-2a} \right) .
			\end{aligned}	
			$$
			And since that
			$$
			\begin{aligned}
				\frac{d}{dt} \left( C_{2}\mathfrak{c}_{p}+C_{1}\frac{a}{m}\frac{I_{a}(\frac{m}{2t})}{I_{a}(\frac{m}{2r_0})}+C_{1}\frac{1}{m}\mathfrak{c}_{p}t^{-a}(1+\frac{m}{2t})^{-2a} \right)
				= -2C_{1}\frac{a}{m}\mathfrak{c}_{p}t^{-a-1}(1+\frac{m}{2t})^{-2a-1} \ge 0,
			\end{aligned}	
			$$
			then when $t$ increases from $r_{0}$ to $+\infty$, the value of
			$$
			\left( C_{2}\mathfrak{c}_{p}+C_{1}\frac{a}{m}\frac{I_{a}(\frac{m}{2t})}{I_{a}(\frac{m}{2r_0})}+C_{1}\frac{1}{m}\mathfrak{c}_{p}t^{-a}(1+\frac{m}{2t})^{-2a} \right)
			$$
			changes from $ \left( C_{2}\mathfrak{c}_{p}+C_{1}\frac{a}{m}+C_{1}\frac{1}{m}(I_{a}(1))^{-1}2^{-2a} \right) $ to $C_{2}\mathfrak{c}_{p}$.
			Recall that
			$$
			\alpha'(t) = (a+1) \eta(t)f'(t) \alpha(t)
			+ \left( C_{2}\mathfrak{c}_{p}+C_{1}\frac{a}{m} \frac{I_{a}(\frac{m}{2t})}{I_{a}(\frac{m}{2r_0})}\right) m
			\mathfrak{c}_{p}^{-2}t^{2a}(1 + \frac{m}{2 t})^{4a}f'(t),
			$$
			now we talk about the sign of $\alpha$ in two cases. \\
			\noindent{\bf Case 1.} If $C_{2}\mathfrak{c}_{p}+C_{1}\frac{a}{m}\ge0$, then $\alpha'(t)\ge0$, $\alpha(t)\ge\alpha(r_{0})=0$. \\
			\noindent{\bf Case 2.}  If $C_{2}\mathfrak{c}_{p}+C_{1}\frac{a}{m}<0$, then the derivative of
			$\frac{\alpha(t)}{t^{a+1}(1+\frac{m}{2t})^{2a+1}}$ is increasing from a negative number to a non-negative number when t changes from $r_{0}$ to $+\infty$. Thus the value of $\frac{\alpha(t)}{t^{a+1}(1+\frac{m}{2t})^{2a+1}}$ is monotone non-increasing from $r_{0}$ to $r_{1}$ and is monotone non-decreasing from $r_{1}$ to $+\infty$, where $r_{1}$ is the number such that
			$$
			\left( C_{2}\mathfrak{c}_{p}+C_{1}\frac{a}{m}\frac{I_{a}(\frac{m}{2r_{1}})}{I_{a}(\frac{m}{2r_0})}+C_{1}\frac{1}{m}\mathfrak{c}_{p}r_{1}^{-a}(1+\frac{m}{2r_{1}})^{-2a} \right) =0.
			$$
			While $t=r_{1}$,
			$$
			\begin{aligned}
				\alpha(r_{1})=& r_{1}\left(1+\frac{m}{2r_{1}}\right)^{2}
				\left\{ \left( C_{2}\mathfrak{c}_{p}+C_{1}\frac{a}{m} \frac{I_{a}(\frac{m}{2r_{1}})}{I_{a}(\frac{m}{2r_0})}  \right)
				\eta(r_{1})-C_{1}\frac{1}{m} \right\} \\
				=& r_{1}\left(1+\frac{m}{2r_{1}}\right)^{2}
				\left\{ -C_{1}\frac{1}{m}\mathfrak{c}_{p}r_{1}^{-a}(1+\frac{m}{2r_{1}})^{-2a}
				\eta(r_{1})-C_{1}\frac{1}{m} \right\}
				\ge 0.
			\end{aligned}
			$$
			Thus $$\frac{\alpha(t)}{t^{a+1}(1+\frac{m}{2t})^{2a+1}}\ge0$$ holds on $[r_{0},+\infty)$, which implies that $\alpha(t)\ge0$ on $[r_{0},+\infty)$.
		\end{proof}
		%%%%%%
		
		%%%%%%%%%%%%%%%%%%%%
		%\section{Appendix 2: A Kato-type Identity}
		%%%%%%
		%\begin{Lemma}
		%Let $v$ be a solution to \eqref{eqn:3.1}, that is,
		%	\begin{equation*}
			%		\left\{
			%		\begin{aligned}
				%			{\rm div}(|\n v|_{\ve}^{p-2}\n v)&=0 \ \ \  \  {\rm in}\ \ M(T),\\
				%			v&=0\ \ \ \ {\rm on} \ \ \p M,\\
				%			v&=f_0(T)\ \ \  \ {\rm on}\  \ \Sigma(T),
				%		\end{aligned}\right.
			%	\end{equation*}
		%	Then on a regular level set $\Sigma_{t, \ve}=\{v_{\ve}=f_{\ve}(t)\}$, the following Kato-type identity holds:
		%%	\begin{equation} \label{eqn:Kato}
			%		|\n^2 v|^2=|\nabla v|^2 |\overset{\circ}{h}|^2 + 2|\nabla^{T}|\nabla v||^2
			%		+ \left[ 1+ \frac{1}{2}\left( (p-2)\frac{|\nabla v|^2}{|\nabla v|_{\ve}^2} + 1 \right)^2 \right] v_{\nu\nu}^2
			%	\end{equation}
		%\end{Lemma}
		%%%%%%
		
		%%%%%%
		%\begin{proof}
		%Recall that  for $\forall X,Y \in T \Sigma_{t, \ve}$, we have
		%	\[
		%	\begin{aligned}
			% \n^2v (X, Y)
			%		= - |\nabla v| g(\nabla_{Y}X, \nu)
			%		= - |\nabla v| h (X, Y).
			%	\end{aligned}
		%	\]
		%It follows that	\[
		%	|\n^2 v|^2 = |\nabla v|^2 |h|^2 + 2 |\nabla^T|\nabla v||^2 + v_{\nu\nu}^2.
		%	\]
		%Recall that the mean curvature $H$ of $\Sigma_{t, \ve}$ is given by
		%\begin{eqnarray}\label{equ-H3}
		%H=\frac1{|\n v|}(\Delta v-v_{\nu\nu})=-\frac1{|\n v|}\frac{(p-1)|\n v|^2+\ve^2}{|\n v|^2_{\ve}}v_{\nu\nu}.\end{eqnarray}	
	%Using $|\overset{\circ}{h}|^2 = |{h}|^2 - \frac{1}{2}H^2$,  we can get \eqref{eqn:Kato} by combining the above equations.
	%\end{proof}
	%%%%%%
	
	\
	
	%%%%%%%%%%
	
	%%%%%%%%%%

\end{document}